\definecolor{xdxdff}{rgb}{0.49019607843137253,0.49019607843137253,1.}
\definecolor{uuuuuu}{rgb}{0.26666666666666666,0.26666666666666666,0.26666666666666666}
\definecolor{ududff}{rgb}{0.30196078431372547,0.30196078431372547,1.}
\definecolor{cite}{RGB}{44,123,182}
\definecolor{ref}{RGB}{215,25,28}
\newtheorem*{rep@theorem}{\rep@title}
\newcommand{\newreptheorem}[2]{%
\newenvironment{rep#1}[1]{%
 \def\rep@title{#2 \ref{##1}}%
 \begin{rep@theorem}}%
 {\end{rep@theorem}}}
\theoremstyle{plain}
\newtheorem{thm}{Theorem}[section]
\newtheorem{corollary}[thm]{Corollary}
\newtheorem{lemma}[thm]{Lemma}
\newtheorem{ld}[thm]{Lemma/Definition}
\newtheorem{proposition}[thm]{Proposition}
\newtheorem*{thm*}{Theorem}
\newtheorem*{corollary*}{Corollary}
\newtheorem*{lemma*}{Lemma}
\newtheorem*{ld*}{Lemma/Definition}
\newtheorem*{proposition*}{Proposition}
\theoremstyle{definition}
\newtheorem{definition}[thm]{Definition}
\newtheorem{remark}[thm]{Remark}
\newtheorem{example}[thm]{Example}
\newtheorem*{definition*}{Definition}
\newtheorem*{remark*}{Remark}
\newtheorem*{example*}{Example}
\newtheorem*{xca*}{Exercise}
\newtheorem*{claim*}{Claim}
\newtheorem*{fact*}{Fact}
\newtheorem*{notation*}{Notation}
\newtheorem*{construction*}{Construction}
\newtheorem*{ack*}{Acknowledgements}
\newtheorem*{question*}{Question}
\newtheorem*{problem*}{Problem}
\newtheorem*{conjecture*}{Conjecture}
\newtheorem*{assumption*}{Assumption}
\newcommand{\C}{\mathbb{C}}
\newcommand{\CC}{\mathbb{C}}
\newcommand{\Z}{\mathbb{Z}}
\newcommand{\ZZ}{\mathbb{Z}}
\newcommand{\Q}{\mathbb{Q}}
\newcommand{\N}{\mathbb{N}}
\newcommand{\pr}[1]{\mathbb P^{#1}}
\newcommand{\HHom}{\mathcal{H}om}
\DeclareMathOperator{\GL}{GL}
\DeclareMathOperator{\SL}{SL}
\DeclareMathOperator{\coker}{coker}
\DeclareMathOperator{\Hom}{Hom}
\DeclareMathOperator{\Ext}{Ext}
\DeclareMathOperator{\im}{Im}
\DeclareMathOperator{\dR}{\mathbf{R}\!}
\DeclareMathOperator{\dL}{\mathbf{L}\!}
\newcommand{\rnat}{\rho_{\mathrm{nat}}}
\newcommand{\rdet}{\rho_{\mathrm{det}}}
\DeclareMathOperator{\Bl}{Bl}
\DeclareMathOperator{\Pic}{Pic\,}
\DeclareMathOperator{\Spec}{Spec}
\DeclareMathOperator{\Proj}{Proj}
\newcommand{\ch}[1]{\mathrm{ch}_{{#1}}}
\DeclareMathOperator{\Coh}{Coh}
\DeclareMathOperator{\QCoh}{QCoh}
\newcommand{\sX}{\mathcal{X}}       
\newcommand{\sO}{\mathcal{O}}
\newcommand{\sL}{\mathcal{L}}
\newcommand{\sR}{\mathcal{R}}
\newcommand{\sT}{\mathcal{T}}
\newcommand{\sZ}{\mathcal{Z}}
\newcommand{\sC}{\mathcal{C}}
\newcommand{\sD}{\mathcal{D}}
\newcommand{\sE}{\mathcal{E}}
\newcommand{\AAA}[1]{\textcolor{red}{#1}}
\renewcommand{\P}{\mathbb{P}}
\newcommand{\TT}{\mathbb{T}}
\newcommand{\RR}{\mathbb{R}}
\newcommand{\QQ}{\mathbb{Q}}
\pgfplotsset{compat=1.15}
\begin{document}
\title[Full exceptional collections for anticanonical log del Pezzo surfaces]{Full exceptional collections for \\ anticanonical log del Pezzo surfaces}

\author[G. Gugiatti]{Giulia Gugiatti}
\address{GG: Math Section \\ ICTP \\ Leonardo Da Vinci Building,
  Strada Costiera 11\\
  34151 Trieste, Italy} 
\email{ggugiatt@ictp.it}

\author[F. Rota]{Franco Rota}
\address{FR: School of Mathematics and Statistics \\ University of Glasgow \\ Glasgow G12 8QQ,
United Kingdom} 
\email{franco.rota@glasgow.ac.uk}

\subjclass[2020]{Primary 14F08, 14A20, 14E16, 14J45; secondary 14J33}
\keywords{Derived categories, canonical smooth Deligne--Mumford stacks, McKay correspondence, log del Pezzo surfaces, mirror symmetry}

\begin{abstract}
Motivated by homological mirror symmetry, 
this paper constructs explicit full exceptional collections for the canonical stacks associated with the series of log del Pezzo surfaces
constructed by Johnson and Koll\'ar in \cite{MR1821068}.
These surfaces have cyclic quotient, non-Gorenstein, singularities. The construction involves both the $\GL(2,\C)$ McKay correspondence, and  the study of the minimal resolutions of the surfaces, which are birational to degree two del Pezzo surfaces. We show that a degree two del Pezzo surface arises in this way if and only if it admits a generalized Eckardt point, and in the course of the paper we classify the blow-ups of $\pr 2$ giving rise to them.  
Our result on the adjoints of the functor of Ishii--Ueda \cite{IU15} applies to any finite small subgroup of $\GL(2,\C)$. 
\end{abstract}
 
\maketitle
\setcounter{tocdepth}{1}
\tableofcontents

\section{Introduction}\label{sec_intro}
A log del Pezzo surface is a projective surface with quotient singularities and ample  anticanonical class.
For each positive integer $k$, consider the family of hypersurfaces  of degree $8k+4$ in weighted projective space 
\begin{equation}
\label{eq:series}
X_{8k+4}	 \subset \P(2,2k+1,2k+1, 4k+1).
\end{equation}
With the exception of 22 sporadic cases, all quasismooth and wellformed log del Pezzo weighted hypersurfaces $X \subset \P(a_1, a_2,a_3,a_4)$ which are anticanonical (that is,  with $-K_X \sim \sO_X(1)$) arise in this way \cite{MR1821068}.

Fix an integer $k>0$ and let $X$ be a general surface of the family \eqref{eq:series}. Then $X$ has isolated cyclic quotient log-terminal  
singularities.
Motivated by homological mirror symmetry, summarized in Section \ref{ssec:related} below, we study the derived category of coherent sheaves of the canonical stack of $X$. This is the unique smooth Deligne-Mumford stack with coarse moduli space $X$ and isomorphic to $X$ outside a locus of codimension 2 \cite[(2.8),(2.9)]{Vistoli}\cite[\S 4.1]{FMN10}\footnote{Sometimes, the expression ``canonical stack" refers to the index 1 cover stack of $X$ \cite[\S 5.2]{KM98}. This is a separate notion, which does not appear in the present paper.}.
The main result is the following.

\begin{thm}[{= (\ref{thm:stack-cat})}]
\label{thm:Main1}
Let $\sX$ be the canonical stack associated with $X$. Then $D^b(\Coh(\sX))$ admits full exceptional collections, given explicitly in Section \ref{sec:DerCat}.
\end{thm}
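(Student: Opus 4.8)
The plan is to build the collection on $\sX$ by comparing it with the minimal resolution $\pi\colon Y \to X$ through the $\GL(2,\C)$ McKay correspondence. First I would record that $Y$ is a smooth projective rational surface: by the birational analysis of the earlier sections, $Y$ is an explicit iterated blow-up of $\P^2$ (arising from the degree-two del Pezzo surfaces with a generalized Eckardt point that are classified there). Consequently $D^b(\Coh(Y))$ carries an explicit full exceptional collection, obtained from the standard collection $(\sO, \sO(1), \sO(2))$ on $\P^2$ together with Orlov's blow-up formula, which adjoins the (suitably twisted) structure sheaf of each exceptional $(-1)$-curve.

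Next I would pass from $Y$ to $\sX$ locally at the singular points. Each singularity of $X$ is an isolated cyclic quotient log-terminal singularity, so étale-locally $\sX$ is a quotient stack $[\C^2/G]$ for a finite small cyclic $G \subset \GL(2,\C)$, and $\pi$ restricts to the minimal resolution of $\C^2/G$, whose exceptional fibre is a Hirzebruch--Jung string. The special McKay correspondence matches these exceptional curves with the \emph{special} representations of $G$; a count of $K_0$ (which for the non-Gorenstein cyclic quotients has strictly larger rank on the stack than on the resolution) shows that the remaining, non-special representations must produce at each singular point a finite list of exceptional objects of $D^b(\Coh(\sX))$ supported on the singular locus. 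This is exactly where the paper's computation of the adjoints of the Ishii--Ueda functor enters: it provides, for every finite small $G\subset\GL(2,\C)$, a globally defined fully faithful functor $\Phi\colon D^b(\Coh(Y))\hookrightarrow D^b(\Coh(\sX))$ together with its adjoints, hence a semiorthogonal decomposition $D^b(\Coh(\sX))=\langle \sE,\ \Phi(D^b(\Coh(Y)))\rangle$, where $\sE$ is generated by the point-supported objects coming from the non-special representations.

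With the decomposition in hand I would assemble the collection: transport the full exceptional collection of $D^b(\Coh(Y))$ through $\Phi$, and prepend the finitely many exceptional objects generating $\sE$ in an order making all the relevant $\Hom$- and $\Ext$-groups vanish, so that the prescribed order is semiorthogonal. Exceptionality of the individual objects and the semiorthogonality inside each block are local computations (the surface collection and Orlov's formula on the $Y$-side, the McKay quiver relations on the $\sE$-side), while the cross $\Hom$-vanishings between $\sE$ and $\Phi(D^b(\Coh(Y)))$ are precisely what the semiorthogonal decomposition furnishes. Fullness is then immediate: the two blocks generate $D^b(\Coh(\sX))$ by the decomposition, and each block is generated by its listed objects.

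I expect the main obstacle to be the globalisation underlying the semiorthogonal decomposition, that is, the adjoint computation for the Ishii--Ueda functor. The functor and its special-McKay input are naturally set up étale-locally around a single quotient singularity, so the real work is to show that the Fourier--Mukai kernel and its adjoints glue over the whole of $\sX$ in the presence of several distinct cyclic quotient points, that $\Phi$ stays fully faithful globally, and that the complement $\sE$ is controlled \emph{exactly} by the non-special representations at each point. A secondary, more combinatorial difficulty is to choose a single linear order on the union of the $Y$-collection and the point-supported objects compatible with all the required $\Ext$-vanishings; this ordering depends on the explicit Hirzebruch--Jung data of each singularity and on the blow-up description of $Y$, and is what makes the collection of Section \ref{sec:DerCat} explicit.
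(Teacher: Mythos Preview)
Your proposal is correct and follows essentially the same route as the paper: Orlov's blow-up formula produces a full exceptional collection on the minimal resolution $\widetilde{X}$ (via the degree-two del Pezzo $X'$), and the Ishii--Ueda semiorthogonal decomposition $D(\sX)=\langle \sE,\Phi(D(\widetilde X))\rangle$ then yields the collection on $\sX$, with $\sE$ given by the simple sheaves $\sO_0\otimes\rho$ for $\rho$ non-special at each singular point.

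One clarification: you misplace the role of the adjoint computation. The global full faithfulness of $\Phi$ and the resulting SOD are already established in \cite{IU15} (this is what the paper cites in Section~\ref{ssec:global-case}), so the globalisation you flag as the main obstacle is not part of the present paper's work. The paper's own Theorem~\ref{thm:psiO0rho_general} computing $\Psi(\sO_0\otimes\rho)$ is not needed for the \emph{existence} of the full exceptional collection; it enters only in Section~\ref{ssec:HomSpaces} to compute the $\Hom$-spaces between the $\sE$-block and $\Phi(\sigma_{\widetilde X})$ via adjunction, thereby making the collection explicit enough to exhibit a tilting complex.
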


The exceptional collections of Theorem \ref{thm:Main1} are formed by $10+12k$ objects. In Section \ref{ssec:HomSpaces} we also exhibit a collection whose nontrivial morphisms are only in degree $0$ and $1$, which in turn implies the existence of a tilting complex on $\sX$ (see Section \ref{ssec:tilting}).

\smallskip

 The starting point of our construction is
the $\GL(2,\C)$ McKay correspondence, as formulated in \cite{IU15}.
For $X$ a surface with finite quotient singularities, denote by $\widetilde{X}$ its minimal resolution. The canonical stack $\sX$ of $X$ can also be interpreted as a stacky resolution:
 \begin{center}
        \begin{tikzcd}
     \widetilde{X} \arrow{dr} & & \sX \arrow{dl} \\
      & X & 
\end{tikzcd}
    \end{center}

The correspondence states that there is a fully faithful functor \mbox{$\Phi \colon D^b(\Coh(\widetilde{X})) \to D^b(\Coh(\sX))$}. Moreover, there exists a semiorthogonal decomposition  (SOD)
\begin{equation} \label{eq:sod}D^b(\Coh(\sX)) = \left\langle \textbf{e}_1,...,\textbf{e}_N,\Phi\left(D^b(\Coh(\widetilde{X}))\right) \right\rangle,
\end{equation}	
where $(\textbf{e}_1,...,\textbf{e}_N)$ is an exceptional collection. 
{If $X$ has only cyclic singularities, $\widetilde{X}$ coincides locally with the moduli space of clusters on $X$, $\Phi$ is the integral functor of the corresponding universal family, and the singularities determine the collection combinatorially (see Section \ref{ssec:gl2}).}

The exceptional sequences of Theorem \ref{thm:Main1} are constructed by applying the above decomposition to the surfaces \eqref{eq:series}, together with:
\begin{enumerate}[(i)]
\item a detailed description of $D^b(\Coh(\widetilde{X}))$;
\item a computation of the images of $\textbf{e}_1,...,\textbf{e}_N$ through the left adjoint functor $\Psi$ to $\Phi$. 
\end{enumerate}

To obtain (i) and (ii), it is necessary to prove two results which may be of independent interest. We give a brief overview here. \smallskip

The first result describes the geometry of the minimal resolution $\widetilde{X}$ of the surfaces \eqref{eq:series}.

\begin{thm}[{= (\ref{thm:min-resol})}]
\label{thm:MinRes}
Let $\widetilde{X}$ be the minimal resolution of $X$. Then $\widetilde{X}$ is obtained from a smooth degree 2 del Pezzo surface $X'$ via a sequence of $1+4k$ blow-ups. 
\end{thm}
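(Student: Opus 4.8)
The plan is to first pin down the singularities of a general $X$, resolve them, and then recognise the minimal resolution as a blow-up of a degree $2$ del Pezzo by means of an auxiliary genus-one fibration. Since $X$ is quasismooth and wellformed, it is smooth away from the singular locus of $\P(2,2k+1,2k+1,4k+1)$, so I would begin with a monomial count. The monomials $x_0^{4k+2}$ and $x_1^4,x_2^4$ have degree $8k+4$, so a general $X$ avoids the three vertices $P_0,P_1,P_2$; on the other hand no monomial is a pure power of $x_3$, so $X$ always contains $P_3=[0:0:0:1]$, and $X$ meets the singular line $L_{12}=\{x_0=x_3=0\}$ exactly in the four points cut out by the quartic form $\sum_i\alpha_i x_1^{4-i}x_2^i$. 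Reading off the transverse weights and normalising, I find that $P_3$ is a $\tfrac1{4k+1}(1,1)$ singularity and that each of the four points of $L_{12}$ is of type $\tfrac1{2k+1}(1,k)$.

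Next I would write down the minimal resolution $\widetilde X\to X$ by Hirzebruch--Jung. The expansion $\tfrac{4k+1}{1}=[\,4k+1\,]$ yields a single $(-(4k+1))$-curve over $P_3$, while $\tfrac{2k+1}{k}=[\,3,2,\dots,2\,]$ (one $3$ followed by $k-1$ twos) yields a chain of $k$ rational curves over each point of $L_{12}$, so $\widetilde X\to X$ has $1+4k$ exceptional curves. Computing the discrepancies, equivalently the self-intersection of $K_{\widetilde X}-\pi^*K_X$, and using $K_X^2=\tfrac{2}{(2k+1)(4k+1)}$, gives
\[
K_{\widetilde X}^2=K_X^2-\frac{(4k-1)^2}{4k+1}-4\cdot\frac{k}{2k+1}=1-4k .
\]
As a $(1+4k)$-fold blow-up of a degree $2$ del Pezzo surface has $K^2=2-(1+4k)=1-4k$, this identifies the candidate target and the exact number of contractions to be performed.

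The core of the proof is to realise this contraction explicitly. I would exploit the pencil $[x_1:x_2]$, whose base locus on $X$ is $P_3$ together with a single residual smooth point $Q\in\{x_1=x_2=0\}$. Resolving it turns the pencil into a genus-one fibration over $\P^1$: a general member is the curve $\{f(x_0,x_1,\lambda x_1,x_3)=0\}\subset\P(2,2k+1,4k+1)$, which meets only the $\tfrac1{4k+1}$-vertex, and orbifold adjunction gives $\deg K^{\mathrm{orb}}=\tfrac{4k}{4k+1}=1-\tfrac1{4k+1}$, cancelling the single orbifold contribution so that the coarse member has genus $1$. With the section coming from $Q$ and the reducible fibres lying over $P_3$ and the four points of $L_{12}$, this fibration organises the $(-1)$-curves of $\widetilde X$; I would then contract $1+4k$ of them, in an order dictated by the fibration, onto a smooth surface $X'$ on which the members of the pencil become anticanonical curves, so that $X'$ is a del Pezzo of degree $2$.

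The main obstacle is the final verification that $X'$ is genuinely del Pezzo: one must show that $-K_{X'}$ is ample, i.e.\ that no $(-2)$-curve survives the contraction and that $K_{X'}^2=2$, and that exactly $1+4k$ $(-1)$-curves are available and contracted in the correct order. The numerics above fix the candidate, but ampleness is a genuine positivity statement about the configuration of curves on $\widetilde X$; it is precisely here that the special position of the blown-up points---which forces $X'$ to carry a generalized Eckardt point---must be established, and this requires analysing the curve configuration rather than a formal computation.
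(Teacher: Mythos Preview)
Your preliminary analysis is correct: the singularity types, the Hirzebruch--Jung data, and the computation $K_{\widetilde X}^2=1-4k$ all check out and match the paper. But your route diverges from the paper's at the decisive step, and the part you flag as ``the main obstacle'' is precisely what your sketch does not resolve.

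The paper never constructs an elliptic fibration and never runs an intrinsic search for $(-1)$-curves. Instead it works entirely in the ambient toric geometry. Starting from the fan $\Delta$ of $\P(2,2k+1,2k+1,4k+1)$, one performs the star subdivisions that produce the minimal resolution $\widetilde X\subset\widetilde G$; the key combinatorial observation is that the new rays $v,v_0,\dots,v_{k-1},\rho_1$ satisfy $\rho_1=v_0+kv$ and $v_i=v_{i-1}+v$, so the \emph{same} fan $\widetilde\Sigma$ is also obtained by a different sequence of star subdivisions starting from a fan with rays $\rho_2,\rho_3,\rho_4,v_0,v$. Since $2v_0+\rho_2+\rho_3+\rho_4=0$, that starting fan is $\P(2,1,1,1)$ blown up once, and one reads off an explicit toric morphism $\widetilde G\to\P(2,1,1,1)$. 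Restricting to $\widetilde X$ gives the contraction $\tau\colon\widetilde X\to X'$, and $X'$ is exhibited directly as the smooth quartic $\{h^{(0)}=0\}\subset\P(2,1,1,1)$, with $h^{(0)}$ obtained from $h^{(k)}$ by the evident substitution. No ampleness check is needed: smoothness of $X'$ and the degree of the hypersurface give the del Pezzo property for free, and the $4k+1$ contracted curves are named explicitly as the strict transforms $C_{i,k}$ of the four lines $c_i=\{x=0\}\cap X$, then successively $C_{i,k-1},\dots,C_{i,1}$, and finally $F$.

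Your approach, by contrast, leaves two genuine gaps. First, you never say which $(-1)$-curves you contract: ``an order dictated by the fibration'' is not enough, and in fact your pencil $[x_1:x_2]$ still has the smooth base point $Q$ on $\widetilde X$, so it is a pencil, not a morphism, on the surface you want to contract from. Second, even granting the right curves, verifying that the resulting $X'$ has $-K_{X'}$ ample (no residual $(-2)$-curves) is a nontrivial configuration check which you defer; the paper's toric construction bypasses it entirely and moreover yields the explicit equation of $X'$, which is essential for the later moduli and Eckardt-point analysis. If you want to salvage an intrinsic argument, the missing ingredient is to first locate the four lines $c_i\subset X$ in $|\sO(2)|\cap|\sO(2k+1)|$, check that their strict transforms are $(-1)$-curves meeting $F$, and then follow the chain of contractions explicitly.
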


As an immediate consequence, full exceptional collections for $\widetilde X$, and hence $\sX$, exist.  
In fact, this can be shown in general for log del Pezzo surfaces since they are rational (see Remark \ref{rem:rationality}).
In the case at hand, we describe them explicitly combining a detailed study of $\widetilde{X}$ and $X^\prime$ with Theorem \ref{thm:calcolo psi} below.

The surface $X'$ appearing in Theorem \ref{thm:min-resol} does not depend on the integer $k$ and it admits a generalized Eckardt point, i.e. a point of intersection of four $(-1)$-curves. Del Pezzo surfaces of degree 2 with a generalized Eckardt point form a stratum of codimension two in the moduli space of all del Pezzo surfaces 
 \cite[Table 8.9]{Dol_CAG}.
 Repeating the construction of Theorem \ref{thm:MinRes}
 in families, we obtain the following result about 
  the moduli space $M_k$ of quasismooth and wellformed hypersurfaces of the form \eqref{eq:series} (we define $M_k$ in Section \ref{sec:moduli}).

\begin{proposition}
    [{= (\ref{pro:moduli})}]
For all $k>0$ integer, $M_k$ is isomorphic to the moduli space of del Pezzo surfaces  with a generalized Eckardt point. 
\end{proposition}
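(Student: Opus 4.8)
The plan is to promote the fibrewise construction of Theorem~\ref{thm:MinRes} to an isomorphism of coarse moduli spaces by producing mutually inverse morphisms between $M_k$ and the moduli space $\sM$ of degree~$2$ del Pezzo surfaces carrying a generalized Eckardt point. Throughout I would carry out each construction over the natural parameter families on the two sides and verify that it commutes with isomorphisms, so that the resulting morphisms descend to the coarse spaces; since all surfaces in $M_k$ are quasismooth and wellformed, they share the same cyclic quotient singularity types (dictated by the weights $(2,2k+1,2k+1,4k+1)$), and this constancy is what makes the relative versions of resolution and contraction behave flatly.

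First I would build the forward morphism $F\colon M_k \to \sM$. Over a family $\mathcal{X}\to S$ of members of \eqref{eq:series} I would form the relative minimal resolution $\widetilde{\mathcal{X}}\to\mathcal{X}$ and then the relative contraction $\widetilde{\mathcal{X}}\to\mathcal{X}'$ that collapses, fibrewise, the $1+4k$ $(-1)$-curves supplied by Theorem~\ref{thm:MinRes}. Because the singularity types and hence the exceptional configurations are locally constant in $S$, these curves organise into flat families of $(-1)$-curves and the contractions exist relatively; the output is a family $\mathcal{X}'\to S$ of degree~$2$ del Pezzo surfaces, and the four $(-1)$-curves meeting at the generalized Eckardt point deform along with it, pinning down the Eckardt structure. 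This defines $F$.

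Next I would construct the inverse $G\colon \sM \to M_k$. Starting from a del Pezzo surface $X'$ of degree~$2$ with generalized Eckardt point $p$, the four concurrent $(-1)$-curves rigidify the local picture, and I would define a canonical chain of $1+4k$ blow-ups --- centred at $p$ and at the $4k$ infinitely near points it determines --- to recover $\widetilde{X}$. Reading off from this configuration the chains of rational curves that resolve the prescribed cyclic quotient singularities, I would contract them to obtain a log del Pezzo surface $X$, and finally recover the weighted hypersurface embedding \eqref{eq:series} as $X=\Proj\bigoplus_{m\ge 0}H^0(X,-mK_X)$ from the anticanonical ring. Running this over the corresponding family on $\sM$ and descending as before yields $G$.

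It then remains to check $F$ and $G$ are mutually inverse, which on closed points is precisely Theorem~\ref{thm:MinRes} combined with the uniqueness of the minimal resolution and of the contraction $\widetilde{X}\to X'$: applying $F$ then $G$ reconstructs $\widetilde{X}$ because the blow-up centres prescribed by $G$ are forced to be the exceptional locus of $\widetilde{X}\to X'$, and the reverse composition is symmetric. The main obstacle I anticipate is exactly the canonicity and algebraicity of $G$ in families: one must prove that the $1+4k$ blow-up centres, the contracted resolution chains, and the generalized Eckardt point itself are all uniquely and algebraically determined by $(X',p)$ as it varies in $\sM$, so that the pointwise combinatorics of Theorem~\ref{thm:MinRes} rigidify into honest morphisms rather than merely a bijection of sets.
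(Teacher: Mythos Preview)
Your overall strategy matches the paper's: construct morphisms in both directions by running Theorem~\ref{thm:MinRes} in families, then check they are mutually inverse. The forward map $F$ you describe is exactly what the paper does in Remark~\ref{rem:families_map} (there phrased at the level of the ambient toric birational correspondence rather than intrinsic resolutions, but with the same effect).

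The genuine difference is in the inverse direction. The paper does \emph{not} rebuild $X$ geometrically from $(X',q)$ via blow-ups, contractions, and the anticanonical ring. Instead it observes (Remark~\ref{rem:surjectivity}) that any degree~$2$ del Pezzo with a generalized Eckardt point can be written as $(h^{(0)}=0)\subset\P(2,1,1,1)$ with $h^{(0)}$ of the explicit shape \eqref{eq:h0}, i.e.\ as data $(a,b,h_1,h_2,h_4)$; the inverse then simply reinterprets this same data as the polynomial $h^{(k)}$ of \eqref{eq:generic_X8k+4} cutting out $X\subset\P(2,2k+1,2k+1,4k+1)$. Well-definedness on moduli is checked by matching the projective automorphisms preserving the two normal forms (Remark~\ref{rem:bijective}). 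This sidesteps entirely the step you flag as the main obstacle: there is no need to prove that your contracted surface is anticanonically embedded as a degree $8k+4$ hypersurface in the correct weighted projective space, nor to verify canonicity of the blow-up centres in families, because the polynomial correspondence is tautological.

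Your geometric inverse is not wrong, but the step ``recover the weighted hypersurface embedding as $\Proj$ of the anticanonical ring'' hides real content: you would need to show that $R(X,-K_X)$ is generated in degrees $2,2k+1,2k+1,4k+1$ with a single relation in degree $8k+4$, and that this holds uniformly in families. The paper's polynomial shortcut buys exactly this for free.
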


In Proposition \ref{prop:ClassificationOfEckardtPoints} we classify all configurations of seven points on $\pr 2$ which, once blown up, give rise to a del Pezzo surface with a generalized  Eckardt point. This classification is then used to describe the Hom-spaces of the exceptional collections of $\sX$.\medskip


Our second result computes the image via $\Psi$ \footnote{Here $\Psi$ is the left adjoint to the (local version of) the functor $\Phi$ defined above. In the rest of the paper, we will distinguish locally and globally defined functors by denoting them with lower-case and upper-case letters, respectively.} of simple sheaves $\sO_0\otimes \rho \in D([\C^2/G])$, where $G$ is any finite small subgroup of $\GL(2,\C)$ and $\rho$ is an irreducible representation of $G$. 
Consider the minimal resolution of $\C^2/G$, let $E_1, \dots, E_m$ be the exceptional curves and let $E$ be the fundamental cycle. 
Write $\rho_i$, with  $i=0,...,n$, for the irreducible representations of $G$, and assume that $\rho_i$, with $i=1,...,m$, is the special representation corresponding to $E_i$ via the McKay correspondence (see Section \ref{ssec:gl2}). 
 
\begin{thm}[{= (\ref{thm:psiO0rho_general})}]
\label{thm:calcolo psi}
Let $\rnat$ be the natural representation of $G\subset \GL(2,\C)$ and $\rdet \coloneqq\Lambda^2\rnat$. 
For $i \in \{1, \dots, m\}$ let $-\alpha_i$ be the self intersection of the exceptional curve $E_i$. Then we have:
\begin{align}\label{eq:formula_calcoloPsi}
\Psi(\sO_0\otimes \rho) \simeq \begin{cases}
\sO_E(E)[1]& \mbox{ if } \rho = \rdet^\ast \\
\sO_{E_{i}}(-\alpha_{i}+1) & \mbox{ if } \rho\otimes \rdet=\rho_i \mbox{ for some }1\leq i\leq m \\
0 & \mbox{ otherwise}.
\end{cases}
\end{align}

\end{thm}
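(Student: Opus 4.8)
The plan is to realize both $\Phi$ and its left adjoint $\Psi$ as integral (Fourier--Mukai) functors and to reduce the statement to a computation on the fibre over the origin. Recall from Section \ref{ssec:gl2} that $\widetilde{X}$ is the moduli space of $G$-clusters, so $\Phi$ is the integral functor whose kernel is the universal family $\sU$ on $\widetilde{X}\times[\C^2/G]$. The standard Fourier--Mukai formalism then presents the left adjoint $\Psi$ as the integral functor with kernel $\sU^{\vee}\otimes q^{\ast}\omega_{[\C^2/G]}[2]$, where $q$ is the projection to the stack factor. The only input specific to our setting is the equivariant dualizing sheaf $\omega_{[\C^2/G]}\simeq\sO\otimes(\Lambda^2\rnat)^{\ast}=\sO\otimes\rdet^{\ast}$: this is precisely where the character $\rdet$ enters the formula.

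Next I would exploit that $\sO_0\otimes\rho$ is supported at the origin $0\in[\C^2/G]$. Writing $i_0\colon\widetilde{X}\times\{0\}\hookrightarrow\widetilde{X}\times[\C^2/G]$ for the inclusion, base change collapses the integral transform to the derived restriction of the kernel and yields an isomorphism of the shape
\begin{equation*}
\Psi(\sO_0\otimes\rho)\simeq\left((\dL i_0^{\ast}\sU)^{\vee}\otimes\rho\otimes\rdet\right)^{G}[2],
\end{equation*}
the superscript $G$ denoting the passage to invariants coming from $\dR p_{\ast}$ along $\widetilde{X}\times BG\to\widetilde{X}$ (the dual on $\sU$ is what converts the $\rdet^{\ast}$ of the canonical sheaf into the twist by $\rdet$ recorded here). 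The object $\dL i_0^{\ast}\sU$ is a $G$-equivariant complex on $\widetilde{X}$, hence splits into isotypic summands indexed by the irreducible representations, and taking invariants of the twist by $\rho\otimes\rdet$ selects the single summand labelled by $\rho\otimes\rdet$. Because the universal family of $\widetilde{X}$ involves only the trivial and the special representations $\rho_0,\dots,\rho_m$ (Section \ref{ssec:gl2}), this already disposes of the third case: if $\rho\otimes\rdet$ is neither trivial nor special then the selected summand is absent and $\Psi(\sO_0\otimes\rho)=0$. Note also that $\rho=\rdet^{\ast}$ is exactly the condition $\rho\otimes\rdet=\rho_0$, so the two non-vanishing cases correspond uniformly to $\rho\otimes\rdet\in\{\rho_0,\dots,\rho_m\}$.

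It then remains to identify the two surviving summands, which is the technical core of the argument. I would compute $\dL i_0^{\ast}\sU$ isotypic component by isotypic component using the explicit description of the tautological bundles and the intersection theory of the exceptional locus. The trivial summand should recover the structure sheaf $\sO_E$ of the fundamental cycle; since $E$ is an effective Cartier divisor on the smooth surface $\widetilde{X}$, local duality gives $\dR\HHom(\sO_E,\sO_{\widetilde{X}})\simeq\sO_E(E)[-1]$, and after the shift $[2]$ one obtains $\sO_E(E)[1]$. The summand labelled by a special $\rho_i$ should be a line bundle supported on the rational curve $E_i$; here the self-intersection enters through the normal bundle $\sO_{E_i}(E_i)\simeq\sO_{\P^1}(-\alpha_i)$, and the same local-duality computation on $E_i$, keeping track of the line-bundle degree and the homological shift, produces $\sO_{E_i}(-\alpha_i+1)$.

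The main obstacle is exactly this last computation of $\dL i_0^{\ast}\sU$: one must pin down both the line-bundle degrees and the homological placement of each isotypic summand near the exceptional fibre, the latter being responsible for the different shifts appearing in the two non-vanishing cases of \eqref{eq:formula_calcoloPsi}. In the cyclic situation relevant to \eqref{eq:series} this can be carried out concretely via the toric, Hirzebruch--Jung description of $\widetilde{X}$; but for a general finite small $G\subset\GL(2,\C)$ one should instead invoke Wunram's special McKay correspondence, which matches $\rho_i$ with $E_i$, together with the intersection data ($E_i\cdot E_j$ and the fundamental cycle) that controls the relevant normal bundles. A secondary but genuine subtlety is bookkeeping: one must fix conventions for the $G$-action, the labelling of $\sU$ by representations, and all duals and shifts consistently, so that the reconciliation of $\rdet$ with $\rdet^{\ast}$ and the two shifts come out exactly as stated.
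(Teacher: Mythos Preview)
Your overall strategy---realize $\Psi$ as an integral functor, feed in $\sO_0\otimes\rho$, and isolate isotypic summands---is the same as the paper's. But there is a genuine error in your vanishing argument, and it is precisely where the real content of the proof lies.

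You write that the third case is disposed of ``because the universal family of $\widetilde{X}$ involves only the trivial and the special representations''. This is false. The tautological bundle $\sR=\dR\pi_{Y*}\sO_Z$ decomposes as $\bigoplus_{\rho}\sR_{\rho}\otimes\rho$ over \emph{all} irreducible representations of $G$, not just the special ones: a $G$-cluster has $H^0$ isomorphic to the regular representation, so every $\rho$ appears with multiplicity $\dim\rho$, and indeed $\sR_\rho\simeq\widetilde{M_\rho}$ for every $\rho$ (Ishii's identification). So the summand you are ``selecting'' is never absent, and your argument for the generic vanishing collapses. The vanishing is a genuinely nontrivial fact: it is the statement that the torsion-free pull-back of the Auslander--Reiten sequence $A_{\rho\otimes\rdet}$ is acyclic when $\rho\otimes\rdet$ is neither trivial nor special. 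This is exactly Ishii's Proposition~\ref{prop:Ishii5.1}, and it is the key external input to the proof.

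The paper's route makes this transparent. Rather than attempting to compute $\dL i_0^*\sU$ directly (which, incidentally, does not obviously commute with dualizing as you assume), one resolves $\sO_0$ by the equivariant Koszul complex, pushes forward, and applies Grothendieck--Verdier duality. This produces the three-term complex
\[
\psi(\sO_0\otimes\rho)\simeq\bigl[\sR_{\rdet\otimes\rho}^\vee\to\sR_{\rnat\otimes\rho}^\vee\to\sR_\rho^\vee\bigr],
\]
which is nothing but the $\sO_Y$-dual of $\widetilde{A}_{\rho\otimes\rdet}$. Proposition~\ref{prop:Ishii5.1} then computes $\widetilde{A}_{\rho\otimes\rdet}$ in all three cases at once; dualizing $\sO_E$ and $\sO_{E_i}(-1)$ on the smooth surface $Y$ is a routine local computation that yields the stated answers with the correct shifts. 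Your sketch of the two surviving cases is essentially this dualization step, but without the AR-sequence identification you have no handle on the input objects, and in particular no mechanism for the vanishing.
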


In Section \ref{ssec:alternative_pf}, we apply Theorem \ref{thm:calcolo psi} to give an alternative proof of \cite[Proposition 1.1]{IU15}, which states that the essential image of $\Phi$ is generated by $\{\sO_{\C^2}\otimes \rho_i\}_{i=1}^m$, and its right orthogonal by $\{\sO_{\C^2}\otimes \rho_i\}_{i=m+1}^n$.

To describe morphism spaces of the SOD \eqref{eq:sod}, one can compute either the images through $\Phi$ of objects in $D(\widetilde{X})$, or the images $\Psi(\textbf{e}_i)$ and use adjunction.  
The latter approach is simpler and gives a more general answer. It is simpler since the computations take place on the smooth surface $\widetilde{X}$ and not on $\sX$. On the other hand, our Theorem \ref{thm:calcolo psi} shows that the objects $\Psi(\sO_0\otimes \rho)$ do not depend on the global properties of $\widetilde{X}$. In fact, the formula \eqref{eq:formula_calcoloPsi} determines $\Psi(\mathbf{e}_i)$ for all surfaces with quotient singularities by finite small groups, since the $\textbf{e}_i$ all have a composition series whose factors are simple sheaves $\sO_0\otimes \rho$ \cite[Section 3]{IU15}.


\subsection{Related works and further problems} \label{ssec:related}
The main motivation for this work comes from mirror symmetry for Fano varieties -- also known as Fano/Landau--Ginzburg (LG) correspondence --
 from the point of view of Kontsevich's homological mirror symmetry (HMS) \cite{Kon94}. 
The  correspondence predicts that the mirror of a $n$-dimensional Fano orbifold $\mathcal{X}$ is a LG model $(Y,w)$ of the same dimension\footnote{In mirror symmetry, it is important to think of a Fano variety $X$ with quotient singularities as a canonical smooth Deligne-Mumford stack $\mathcal{X}$. This allows one to define Gromov--Witten invariants for $\mathcal{X}$. See \cite[Section 3]{MR3830183} for a concise summary of the quantum period of a Fano orbifold.}, and  can be interpreted at different levels.

Its Hodge-theoretic formulation identifies the regularised quantum differential operator of $\mathcal{X}$ and the Picard--Fuchs operator of $(Y,w)$ \cite{Gol1,Gol2, P}.
Recent works suggest that the Hodge-theoretic mirrors of  
Fano orbifolds with a $\Q$-Gorenstein (qG) degeneration\footnote{
    We refer the reader to \cite{KSB, Hack, Cor15} for the notion of $\QQ$-Gorenstein  (qG)  deformation of varieties with quotient singularities.}
to a toric variety 
should be LG models restricting  to specific 
Laurent polynomials built out of the combinatorics of the toric variety. This perspective, first laid out in \cite{ Gol1, P2,CC}, agrees with the  Givental/Hori--Vafa mirror construction \cite{Giv3,Hori-Vafa} for toric complete intersections  and the intrinsic mirror symmetry program \cite{2016arXiv160900624G} for log Calabi--Yau pairs. 

One formulation of HMS\footnote{A parallel formulation, which translates the Hodge-theoretic version of mirror symmetry mentioned above, is an equivalence between a suitable Fukaya category of $\sX$ and the category of matrix factorisations of $(Y,w)$.} predicts an equivalence  $D^b(\Coh(\sX)) \equiv D\mathrm{Lag}_{\mathrm{vc}}(Y,w)$ between the bounded derived category  of coherent sheaves of $\mathcal{X}$ and the derived category of Lagrangian vanishing cycles of $(Y,w)$ \cite{Seidel}\footnote{A rigorous definition of this category has been proposed by Seidel in the case where $(Y,w)$ is a Lefschetz fibration.}. 
It is expected that HMS implies Hodge-theoretic mirror symmetry (see \cite{Kon94}, and \cite{GPS}), even though the relation between these two formulations is not proven in full generality.
Unlike Hodge-theoretic mirror symmetry, HMS has been proven systematically only for few classes of Fano varieties; among them, smooth del Pezzo surfaces \cite{AKO06}, and weighted projective planes 
\cite{AKO08}.

The focus of our work is on the surfaces \eqref{eq:series}, 
which we think of 
as canonical stacks. These surfaces 
have isolated cyclic quotient singularities which are qG-rigid, that is, which are rigid with respect to qG-deformations.  
As such, they fit into the context of \cite{Cor15}. However, since their 
 anticanonical linear system is empty, they do not admit a qG-degeneration to a toric variety (see \cite[Remark 2.7]{ACGG}), thus they are out of the mirror symmetry setting of \cite[Conjecture A]{Cor15}.
The work \cite{ACGG} contains the only known mirror construction for the surfaces \eqref{eq:series}. The authors show that regularised quantum differential operator of the surfaces is hypergeometric, and use hypergeometric motives to build their Hodge-theoretic mirrors: these are pencils of hyperelliptic curves which do not restrict to Laurent polynomials. 

This paper is the first step towards the study of HMS for the surfaces \eqref{eq:series}, as it describes exceptional collections of $D^b(\Coh(\sX))$. 
A construction of other exceptional collections for $\sX$ using other methods appears to be challenging: for example, neither $X$ nor $\widetilde X$ are toric, so the Kawamata construction \cite{Kaw06} does not apply. 
Nevertheless, the surfaces $X$ are hypersurfaces in weighted projective spaces, so they inherit  a --- not full --- exceptional collection from the ambient space.
This collection appears in Elagin \cite{Ela07}. There, the author uses it to build  a full exceptional collection for the stack of a surface with one log-terminal singularity via an \textit{ad hoc} mutation of a set of generators for the residual category. 
There seems to be no systematic way of extending the construction  \cite{Ela07} to other cases.

\subsection*{Structure of the paper}

Section \ref{sec:preliminaries} recalls definitions and basic properties. We set some toric geometry notation in \ref{ssec:toric}. Then we recall properties of log del Pezzo surfaces in \ref{ssec:log-dp} and the $\GL(2,\C)$ McKay correspondence in \ref{ssec:gl2}.

Section \ref{sec:AdjointComputation} contains the computation of the adjoint functor $\Psi$, and the alternative proof of \cite[Prop. 1.1]{Ish02} in \ref{ssec:alternative_pf}. In Section \ref{ssec:CyclicSingAdjointComputation}, we rephrase Theorem \ref{thm:calcolo psi} in the cyclic singularities setting, and reprove it using toric geometry.

Section \ref{sec:min-res} is dedicated to the minimal resolutions and contains the proof of Theorem \ref{thm:MinRes}. In \ref{ssec:Eckardt} we classify degree 2 del Pezzo surfaces admitting a generalized Eckardt point.

In Section \ref{sec:DerCat} we show the existence of full exceptional collections for $\widetilde{X}$ and $\sX$ and describe them explicitly. We discuss the existence of a tilting complex for $\widetilde X$ and $\sX$ in \ref{ssec:tilting}.

\subsection*{Notation and conventions}

We work over the field of complex numbers. 
We use $D(-)=D^b(\mathrm{Coh}(-))$ to denote the bounded derived category of coherent sheaves and $D_G(-)$ to denote the bounded derived category of $G$-equivariant coherent sheaves.
We write $\sO(n)$ to denote the line bundle of degree $n$ in the (weighted) projective space $\mathbb P$. For $C\subset \mathbb{P}$ a rational curve, we write $\sO_C(1)$ for its line bundle of degree one, so that $\sO(1)_{|C}=\sO_C(\deg C)$. We recollect here some of the notation used in the paper:
\begin{center}
\begin{tabular}{clr}
 Notation & Description & Local alternative\\\hline
    $X$ & Singular surface & $\C^2/G$\\
    $\widetilde X$ & minimal resolution of $X$ & $Y$\\
    $\sX$ &  canonical stack of $X$ & $[\C^2/G]$ \\
    $\Phi$ & functor $\Phi\colon D(\widetilde X) \to D(\sX)$ & $\phi$  \\
    $\Psi$ & left adjoint to $\Phi$  & $\psi$  \\
\end{tabular}
\end{center}

\subsection*{Acknowledgements}
We are thankful to Arend Bayer, Alessio Corti, and Michael Wemyss for valuable discussions and advice. We thank  Akira Ishii for his remarks on Section \ref{ssec:alternative_pf}. We are grateful to Dominic Bunnett and to Stefano Filipazzi for helpful correspondence. 
The second author was partially supported by EPSRC grant EP/R034826/1.


\section{Preliminaries}\label{sec:preliminaries}
This section is organised as follows: in \ref{ssec:toric} we set up our notation for toric varieties and we state some well-known facts which we will use repeatedly; 
in \ref{ssec:log-dp}  we recall the notion of 
anticanonical log del Pezzo surface, the classification \cite{MR1821068}, and some basic facts about smooth del Pezzo surfaces of degree two; \ref{ssec:gl2} is a summary of the $\GL(2,\C)$ McKay correspondence, with a focus on the case of cyclic quotient singularities.

\subsection{Toric setup}  \label{ssec:toric}
For details on the content of this Section, we refer the reader to \cite[Chapters 1, 3 and 11]{cox2011toric} and
\cite[\S 10]{KM92}.

For a free abelian group of finite rank $L$, we denote by $L_{\RR}=L \otimes_{\ZZ} \RR$ the
associated real vector space. For a torus $\TT$, we denote by
$M=\mathrm{Hom}(\TT, \CC^\times)$ the character group and we write $N=\mathrm{Hom}(M, \ZZ)$. Given vectors $v_1, \dots, v_k \in L_\RR$, we write $\langle v_1, \dots, v_k \rangle $ for the cone they generate.

For a fan $\Sigma \subset N_\RR$, we denote by $G_{\Sigma}$ the
associated toric variety. The toric variety is proper if and only if $\Sigma$ is a complete fan, that is, its support $|\Sigma|$ is all $N_\RR$. The toric variety has finite quotient singularities 
if and only if the fan is simplicial. 
We write $\Sigma(r)$ for the set of $r$-dimensional cones of the fan $\Sigma$.  We denote by $\rho_i$ the primitive
generators of the $1$-dimensional cones (or \emph{rays}) of $\Sigma$, and by
$D_{\rho_i} \subset G_{\Sigma}$ the corresponding divisors under the toric orbit-cone correspondence~\cite[\S 3.2]{cox2011toric}. 
\smallskip

Below we recall the construction of star subdivision of a fan, as given in \cite[\S 11.1]{cox2011toric}.

\begin{ld}[{\cite[Lemma 11.1.3]{cox2011toric}}]
Let $\Sigma \subset N_\RR$ be a fan with rays $\rho_1, \dots, \rho_n$ and let $v \in |\Sigma| \cap N$ be a primitive element. 
Define \emph{the star subdivision}  of $\Sigma$ at $v$, denoted by $\Sigma^\ast(v)$, as the 
set of the cones:
\begin{itemize}
\item[(1)] $\sigma$, where $v \notin \sigma \in \Sigma$;	
\item[(2)] $\mathrm{cone}(\tau, v)$, where $v \notin \tau \in \Sigma$ and there exists $\sigma \in \Sigma$ such that $\{v\} \cup \tau \subset \sigma$. \end{itemize}
The set $\Sigma^\ast(v)$ is a refinement of $\Sigma$, thus  induces a toric morphism $G_{\Sigma^\ast(v)} \to G_\Sigma$.
\end{ld}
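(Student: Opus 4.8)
The statement packages two claims: that $\Sigma^\ast(v)$ is a fan refining $\Sigma$, and that any refinement induces a toric morphism. The second implication is formal and I would dispatch it first. A \emph{refinement} of $\Sigma$ is a fan $\Sigma'$ with $|\Sigma'| = |\Sigma|$ each of whose cones is contained in a cone of $\Sigma$; for such a $\Sigma'$ the identity map $N \to N$ is compatible with the two fans, and hence induces a toric morphism $G_{\Sigma'} \to G_\Sigma$ (see \cite[\S 3.3]{cox2011toric}). So the plan reduces to showing that $\Sigma^\ast(v)$ is a fan, that $|\Sigma^\ast(v)| = |\Sigma|$, and that each of its cones sits inside a cone of $\Sigma$; the morphism then follows with no further work.

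Rationality, strong convexity and subordination are the easy inputs. A cone of type $(1)$ already belongs to $\Sigma$, so there is nothing to check. A cone $\mathrm{cone}(\tau, v)$ of type $(2)$ is rational, since $\tau$ is rational and $v \in N$, and by hypothesis there is $\sigma \in \Sigma$ with $\mathrm{cone}(\tau, v) \subseteq \sigma$; being a subset of the strongly convex cone $\sigma$ it is itself strongly convex. This same containment shows that every cone of $\Sigma^\ast(v)$ lies in a cone of $\Sigma$, giving $|\Sigma^\ast(v)| \subseteq |\Sigma|$.

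The heart of the matter is local to each cone of $\Sigma$ and is organised by the position of $v$. Since the relative interiors of the cones of $\Sigma$ partition $|\Sigma|$, there is a unique $\gamma \in \Sigma$ with $v$ in its relative interior, and then $\sigma \in \Sigma$ contains $v$ if and only if $\gamma$ is a face of $\sigma$. For such a $\sigma$ I would prove the ``pulling'' identity
\[ \sigma = \bigcup_{\substack{\tau \preceq \sigma \\ v \notin \tau}} \mathrm{cone}(\tau, v): \]
given $u \in \sigma$, let $\lambda \geq 0$ be maximal subject to $u - \lambda v \in \sigma$ (a maximum exists because strong convexity forbids $-v$ from being a recession direction), so that $u - \lambda v$ lies in a proper face $\tau$ of $\sigma$ with $v \notin \tau$, whence $u = (u-\lambda v) + \lambda v \in \mathrm{cone}(\tau, v)$. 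Taking the union over all $\sigma$ having $\gamma$ as a face, and noting that the cones with $v \notin \sigma$ are carried over unchanged, yields $|\Sigma^\ast(v)| = |\Sigma|$. The same local picture gives closure under faces: a proper face of $\mathrm{cone}(\tau, v)$ is either a face $\tau' \preceq \tau$, which does not contain $v$ and so is of type $(1)$, or of the form $\mathrm{cone}(\tau', v)$ for a proper face $\tau' \prec \tau$, which is again of type $(2)$.

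The intersection axiom is the step I expect to be the main obstacle. One must show that for any two cones $\delta_1, \delta_2 \in \Sigma^\ast(v)$ the intersection $\delta_1 \cap \delta_2$ is a common face; when both are of type $(1)$ this is inherited from $\Sigma$, and the delicate cases involve the new cones $\mathrm{cone}(\tau_i, v)$, which require separating supporting functionals and matching $\delta_1 \cap \delta_2$ against the face lists produced above. The cleanest route is to treat first the single-cone case, proving that the cones $\mathrm{cone}(\tau, v)$ with $\tau \preceq \sigma$, $v \notin \tau$, together with their faces, form a fan subdividing $\sigma$ with the expected face lattice, and then to glue: two subdivided cones $\sigma_1, \sigma_2$ meet along $\sigma_1 \cap \sigma_2$, which is a common face in $\Sigma$ whose induced subdivision agrees from either side because the construction depends only on $v$ and on the face, not on the ambient cone. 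This compatibility of the local subdivisions along shared faces is the one point demanding genuine care; everything else is bookkeeping with supporting hyperplanes.
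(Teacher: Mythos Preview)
The paper does not prove this statement: it is stated in the preliminaries (Section \ref{ssec:toric}) as a Lemma/Definition recalled from \cite[Lemma 11.1.3]{cox2011toric}, with no proof given. Your sketch is a reasonable outline of the standard argument found in that reference, and you correctly identify the intersection axiom as the only step requiring genuine care; since there is nothing in the paper to compare against, there is no further comparison to make.
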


In Section \ref{sec:min-res} we will use toric morphisms coming from star subdivisions to resolve singularities of toric varieties. 
In particular, we will consider situations where $\Sigma$ contains a cone $\tau$ with rays $\rho_{i_1}, \dots, \rho_{i_r}$ such that the corresponding affine chart $U_\tau$ is of the form:
\[
 U_\tau \simeq \CC^r/{\ZZ_m(a_1, \dots, a_r) }\times (\CC^\times)^{n-r} \subset G_{\Sigma}
\]
 where $m,a_i$ are positive integers such that $a_i <m$ for all $i$. 
Then the toric variety $G_{\Sigma^\ast(v)}$ associated to the star subdivision $\Sigma^\ast(v)$, with
\begin{equation} \label{eq:vectorv} v=\frac{1}{m}\sum_{i=1}^{r} a_j \; \rho_{i_j}
\end{equation}
is the weighted blow-up of $G_\Sigma$ along $V(\tau)$ with weights $\frac{1}{m}(a_1, \dots , a_r)$ \cite[(10.3)]{KM92}, where we write $V(\tau)$ for the closure of the $\TT_N$-orbit in $G_\Sigma$ corresponding to $\tau$. As such, the variety $G_{\Sigma^\ast(v)}$ fits in a fiber square:
\begin{center}
\begin{tikzcd}
\mathcal{E}  \ar[d]  \arrow[hookrightarrow,r]&   G_{\Sigma^\ast(v)} \ar[d]  \\
 V(\tau)  \ar[hookrightarrow,r]  & \  G_\Sigma
\end{tikzcd}
\end{center}
  where the exceptional divisor $\mathcal{E}$ is a $\mathbb{P}(a_1, \dots, a_r)$-bundle over $V(\tau)$.

\subsection{Log del Pezzo weighted hypersurfaces} \label{ssec:log-dp}
For positive integers $a_i$, $i=1, \dots, m$, we denote by $S=S(a_1, \dots, a_n)$ the graded polynomial ring $\CC[x_1, \dots, x_n]$ where $\deg x_i=a_i$, and by $\P(a_1, \dots, a_m)$ the weighted projective space with weights 
$a_1, \dots, a_m$.  We simply write $\P$ if the weights and $m$ are clear from the context. We write $\P_{\mathrm{sing}}$ for the singular locus of $\P$. The space $\P$ is said \emph{wellformed} if the quotient stack $\pmb{\P}=[\Spec{S} \setminus \{ 0\}/\CC^\times]$  is canonical, that is, the natural morphism $s\colon \pmb{\P} \to \P$ is an isomorphism in codimension $1$.
  
A subvariety $X \subset \P$ is called \emph{quasismooth} of dimension $m$ if its affine cone $C_X \subset \CC^{n+1}$ is smooth of dimension $m+1$ outside of its vertex $\underline{0}$. This implies that the 
singularities of $X$ are only due to the $\CC^\times$-action defining $\P$, hence are cyclic quotient singularities.
  A subvariety $X \subset \P$ of codimension $c$  is called \emph{wellformed} if $\P$ is wellformed and 
 $\mathrm{codim}_{X}(\P_{\mathrm{sing}} \cap X) \geq 2$. 
 Observe that, if $X=\Proj(S/I) \subset \P$, where $I$ is a homogeneous ideal of $S$, is quasismooth and wellformed, then $X$ is the coarse moduli space of the canonical stack 
 $[\Spec (S/I) \setminus \{ 0\}/\CC^\times]$.

For $X=X_d\subset \P(a_1, \dots, a_m)$ a quasismooth and wellformed hypersurface of degree $d$, the adjunction formula for the canonical sheaf $K_X$ \cite[Section 6.14]{IF} states that
  \[ K_X=\sO_X(d-\sum_{i=1}^m a_i).
  \]
  By definition $X$ is Fano if $-K_X$ is ample, i.e., if and only if $\sum_{i=1}^m a_i-d>0$. $X$ is called \emph{anticanonical} if $-K_X=\sO_X(1)$, i.e., if and only if $\sum_{i=1}^m a_i-d=1$.
 \smallskip

A Fano surface with quotient singularities is called a \emph{log del Pezzo surface}. 
Johnson and Koll\'ar \cite{MR1821068} classify all anticanonical quasismooth wellformed log del Pezzo surfaces in weighted projective $3$-spaces.
They are the general members of either the series \eqref{eq:series}, where  $k \in \N, \  k > 0$, 
or of one of $22$ sporadic families. The sporadic families are all listed in~\cite[Theorem 8]{MR1821068}.
General surfaces of the form \eqref{eq:series} have isolated log-terminal singularities (see Section \ref{ssec:basic}).  
On the other hand, there are three sporadic families for which quasismoothness actually coincides with smoothness, namely cubic surfaces 
$X_3 \subset \P^3$, degree-two del Pezzo surfaces $X_4 \subset \P(1,1,1,2)$, and degree-one del Pezzo surfaces $X_6 \subset \P(1,1,2,3)$.
 
\subsubsection{Degree-two del Pezzo surfaces} \label{ssec:dP2}
We will prove in Section \ref{sec:min-res} that the geometry of the series \eqref{eq:series} is related to the geometry of some special
 degree-two smooth del Pezzo surfaces. Below we recall some basic facts about degree-two smooth del Pezzo surfaces which we will use repeatedly. 
 
Let $S$ be a smooth del Pezzo surface of degree two. Then $S$ is isomorphic to the blow-up of $\P^2$ at $7$ distinct points in general position \cite[IV.24.4]{Man86}\cite[Thm 1]{ Dem80}. Conversely, the choice of a set $\mathcal{L}=\{l_1, \dots, l_7\}$  of $7$ disjoint $(-1)$-curves  on $S$  is equivalent to the choice of a birational morphism $\pi  \colon S \to \P^2$ which expresses $S$ as the blowup of $\pr 2$ at the $7$ points $x_i=\pi(l_i)$. We refer to such a set $\mathcal{L}$ as to an \textit{exceptional set} for $S$.

On the other hand, $S$ is expressed by its anticanonical series as a double cover  $ \sigma \colon S \to \pr 2$ branched along a quartic curve $Q=(f_4(x_1,x_2,x_3)=0) \subset \P^2$ \cite[III.3.5]{Kol96}
Hence, $S$ can be defined by an equation of the form \[(x_4^2-f_4(x_1,x_2,x_3)=0) \subset \P(1,1,1,2).\] 

The surface $S$ contains $56$ $(-1)$-curves, which can be described in terms of  the morphisms $\sigma$ and $\pi$. 
On the one hand,  the $56$ $(-1)$-curves are the preimages through $\sigma$ of the $28$ lines bitangent to $Q$. Each preimage is a reducible curve of the form $m\cup m'$ where $m,m'$ are $(-1)$-curves on $S$ satisfying $m+m'\sim - K_S$ (we refer to $m$ and $m^\prime$ as \emph{dual} curves). Thus any two of the $56$ $(-1)$-curves are either disjoint, intersect at one point, or intersect at two points precisely if they are dual to each other. 
On the other hand, given an exceptional set $\sL=\{l_1, \dots, l_7\}$ for $S$ and the corresponding blow-down morphism  $\pi \colon S \to \P^2$, the $28$ pairs of $(-1)$-curves
 described above are the strict transforms through $\pi$ of:
\begin{itemize}
\item the line through two points $x_i,x_j$ and the conic through the remaining 5 points (21 pairs);
\item the cubic with a node at one of the $x_i$ and passing through the remaining 6 points, paired with the exceptional divisor $l_i$ (7 pairs).
\end{itemize}
 
By the proof of \cite[Lemma 4.1]{TVAV09}, at most four bitangents to $Q$ can meet at one point, thus at most four (-1)-curves on $S$ can meet at one point of $S$. Such points on $S$ are called \textit{generalized Eckardt points}.

Denote by $I^{1,7}$ the lattice $\ZZ^8$ together with the symmetric bilinear form defined by the diagonal matrix with diagonal entries $(1,-1,\dots,-1)$ with respect to the standard basis
\[ e_0=(1,0, \dots, 0)^T, \quad e_1=(1,0 \dots, 0)^T, \quad \dots \quad ,  e_7=(0,0 \dots, 1)^T.\] 
The Picard lattice $\Pic(S)$ is isomorphic to the lattice $I^{1,7}$\cite[Section 8.2.6]{Dol_CAG}. The set of $(-1)$-curves on $S$ is in bijection with the set of  exceptional vectors in $ I^{1,7}$, that is, the vectors $v \in I^{1,7}$ such that  $v \cdot k=-1$, where $k=(3,-1,\dots,-1)^T$, and  $v^2=-1$. These vectors are listed in \cite[Proposition 8.2.19]{Dol_CAG}.

\subsection{\texorpdfstring{$\GL(2,\C)$}{GL(2,C)} McKay correspondence} \label{ssec:gl2}
Suppose $G$ is a finite subgroup of $\GL(2,\C)$ which is small, i.e. it acts freely on $\C^2/\{0\}$. Let $G$ act naturally on $S= \C[x,y]$ and set $R=S^G$. We can associate two  resolutions to the singularity $X=\Spec R$: its minimal resolution $f\colon Y \to X$, and a \textit{non-commutative crepant resolution} (\cite[Definition 4.1]{VdB04_NCCR}) given by the skew group algebra $S\ast G$. The category of finitely generated $S\ast G$-modules is equivalent to the category of $G$-equivariant sheaves on $\C^2$, and therefore to the category of coherent sheaves over the quotient stack $[\C^2/G]$:
\[ \mathrm{mod }\;S\ast G \simeq \Coh[\C^2/G]. \]

There are bijective correspondences among the sets of irreducible representations of $G$, indecomposable Cohen-Macaulay $R$-modules, indecomposable projective $S\ast G$-modules, and indecomposable full sheaves on $Y$ \cite[Remark 1.4, Lemma 2.2]{Esn85}.
More explicitly, to an irreducible representation $\rho$ of $G$, we associate a free $S$-module\footnote{By \cite[Lemma 1.1]{Aus86}, a $S\ast G$-module is projective precisely if it is free as a $S$-module. Then, the $S\otimes \rho$ for $\rho\in \mathrm{Irrep}(G)$ are all the indecomposable projectives.} $S\otimes \rho$, the CM $R$-module 
\[ M_{\rho}\coloneqq (S\otimes \rho^*)^G \]
and the coherent sheaf $\widetilde{M_\rho}\coloneqq f^*(M_\rho)/\mathrm{tors.}$ on $Y$ (sheaves arising this way are called \textit{full}).

Moreover, irreducible exceptional curves of $Y$ are in bijection with 
\textit{special} representations of $G$, i.e. irreducible representations $\rho$ for which 
$H^1\left(\widetilde{M_\rho}^\vee\right)=0$.
Let $E_1,...,E_m$ be the exceptional curves on $Y$: Wunram \cite[Theorem 1.2]{Wun88} shows that there are exactly $m+1$ special representations $\rho_0,\rho_1,...,\rho_m$ of $G$, where $\rho_0$ is the trivial representation and the other satisfy the \textit{multiplication formula} 
\[\mathrm{c}_1(\widetilde{M}_{\rho_i})\cdot E_j=\delta_{ij}.\]
A refined version of the formula is given by Ishii in \cite{Ish02}, and we briefly recall it here.

Consider the equivariant Koszul complex of $0\in\C^2$:
\begin{equation}
\label{eq:local_Koszul}
0\to S \otimes \rdet \xrightarrow{(y,-x)^T} S 
\otimes \rnat \xrightarrow{(x,y)} S \to \sO_0 \to 0,
\end{equation}
where $\rnat$ is the representation induced by $G\subset \GL(2,\C)$ and $\rdet\coloneqq\Lambda^2\rnat$.
Tensoring with $\rho^*$ and taking invariants we obtain the Auslander-Reiten (AR) sequences 
\begin{equation}
\label{eq:ARSeq}
\begin{split}
A_\rho \coloneqq  \left[  0\to \tau(M_\rho)  \to  N_\rho  \to M_\rho \to 0 \right],
\end{split}
\end{equation}
where $N_\rho\coloneqq (S\otimes \rnat \otimes \rho^*)^G$ and $\tau(M_\rho)\coloneqq (S\otimes \rdet \otimes \rho^*)^G$. The $A_\rho$ are exact except for $\rho=\rho_0$, in which case
\[ 0\to\tau(R) \to N_{\rho_0} \to R \to \sO_0 \to 0 \]
is exact. 

Recall that the fundamental cycle of $Y\to \Spec R$ is $E=\sum_{i=1}^m r_i E_i$, where $r_i\geq 1$ are the unique smallest integers satisfying $E\cdot E_i \leq 0$ for all $i=1,...,m$ \cite{Art66}. An immediate consequence of the proof of {\cite[Theorem 5.1]{Ish02}} is:

\begin{proposition}
\label{prop:Ishii5.1}
Let $\rho\in \mathrm{Irrep}(G)$. Denote by $E$ the fundamental cycle of $ f \colon Y \to X$ and by $\widetilde{A_\rho}\in D^b(Y)$ the complex formed by the torsion-free pull-back of the terms of $A_\rho$. 
Then
\begin{align}
\widetilde{A_\rho} \simeq \begin{cases}
\sO_E & \mbox{ if } \rho=\rho_0, \\
\sO_{E_{i}}(-1)[1] & \mbox{ if } \rho=\rho_i \mbox{ for } 1\leq i\leq m, \\
0 & \mbox{ otherwise.}
\end{cases}
\end{align}
\end{proposition}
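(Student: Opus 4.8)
The plan is to view $\widetilde{A_\rho}=[\,\widetilde{\tau(M_\rho)}\xrightarrow{\alpha}\widetilde{N_\rho}\xrightarrow{\beta}\widetilde{M_\rho}\,]$, placed in cohomological degrees $-2,-1,0$, as a three-term complex of \emph{vector bundles}: on the smooth surface $Y$ every full sheaf is reflexive, hence locally free, so the terms are bundles of ranks $\dim\rho,\ 2\dim\rho,\ \dim\rho$. Since $f$ is an isomorphism over $X\setminus\{0\}$, where $\widetilde{(-)}$ is just restriction, away from the exceptional fibre $E=f^{-1}(0)$ the complex coincides with $A_\rho$; the latter is exact for $\rho\neq\rho_0$ and has cohomology $\sO_0$ (supported at the singular point) for $\rho=\rho_0$. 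Either way the cohomology sheaves of $\widetilde{A_\rho}$ are supported on $E$.

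First I would dispose of the outer terms. Because $f^\ast$ is right exact and $\widetilde{M_\rho}$ is torsion free, the surjection $N_\rho\to M_\rho$ (valid for $\rho\neq\rho_0$) pulls back to a surjection $\beta\colon\widetilde{N_\rho}\to\widetilde{M_\rho}$, so $H^0(\widetilde{A_\rho})=0$. Dually $\alpha$ is generically injective, and its kernel, a torsion subsheaf of the torsion-free $\widetilde{\tau(M_\rho)}$, vanishes, so $H^{-2}(\widetilde{A_\rho})=0$. Thus for $\rho\neq\rho_0$ the complex is concentrated in degree $-1$, where $H^{-1}=\operatorname{coker}\bigl(\widetilde{\tau(M_\rho)}\hookrightarrow\ker\beta\bigr)$ is the cokernel of an injection of vector bundles of equal rank $\dim\rho$ (note $\ker\beta$ is locally free), hence a sheaf supported on the degeneracy divisor of that map. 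For $\rho=\rho_0$ the sequence reads $0\to\tau(R)\to N_{\rho_0}\to R\to\sO_0\to 0$, so $\beta\colon\widetilde{N_{\rho_0}}\to\sO_Y$ has image the inverse-image ideal $\mathfrak m\,\sO_Y$; identifying this with $\sO_Y(-E)$ via the theory of the fundamental cycle of a (rational) quotient singularity gives $H^0(\widetilde{A_{\rho_0}})=\sO_E$, while the injectivity argument again kills $H^{-1}$ and $H^{-2}$. This settles the $\rho_0$ line of the statement.

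It remains to identify the single torsion sheaf $H^{-1}$ for $\rho\neq\rho_0$, which I would do through its Chern character; by the degree count $\Ch(H^{-1})=\Ch(\widetilde{N_\rho})-\Ch(\widetilde{\tau(M_\rho)})-\Ch(\widetilde{M_\rho})$. Using $\tau(M_\rho)=M_{\rho\otimes\rdet^\ast}$, $N_\rho=M_{\rho\otimes\rnat^\ast}$ and the decomposition of $\rho\otimes\rnat^\ast$ dictated by the McKay quiver, the first Chern class is evaluated by pairing against the $E_j$ and invoking Wunram's multiplication formula $c_1(\widetilde{M_{\rho_i}})\cdot E_j=\delta_{ij}$. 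The outcome is $c_1(H^{-1})\cdot E_j=\delta_{ij}$ when $\rho=\rho_i$ is special, and $\Ch(H^{-1})=0$ otherwise. In the latter case a torsion sheaf with vanishing Chern character is zero, giving the third line. In the former the support is the reduced curve $E_i\cong\P^1$ with multiplicity one, so $H^{-1}$ is a rank-one torsion-free, hence invertible, sheaf on $E_i$, and the value of $\ch2(H^{-1})$ forces its degree to be $-1$; thus $H^{-1}\cong\sO_{E_i}(-1)$ and $\widetilde{A_{\rho_i}}\simeq\sO_{E_i}(-1)[1]$.

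The main obstacle is this last identification: the Chern character pins down $H^{-1}$ only up to numerical class, and to conclude that it is honestly the line bundle $\sO_{E_i}(-1)$ on the \emph{reduced} $E_i$ — rather than a sheaf with embedded points or nonreduced support sharing the same class — one must show that $\alpha$ drops rank by exactly one along reduced $E_i$. This is a local statement near the generic point of each exceptional curve and is precisely the computation underlying \cite[Theorem 5.1]{Ish02}; together with the fundamental-cycle identification $\mathfrak m\,\sO_Y=\sO_Y(-E)$ used for $\rho_0$, it yields the proposition. A useful consistency check throughout is to apply $\dR f_\ast$: since the $\widetilde{M_\sigma}$ are full one has $\dR f_\ast\widetilde{A_\rho}=A_\rho$, which is exact for $\rho\neq\rho_0$ and has cohomology $\sO_0$ for $\rho_0$, matching $\dR f_\ast\sO_{E_i}(-1)=0$ and $\dR f_\ast\sO_E=\sO_0$.
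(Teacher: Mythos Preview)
The paper does not give its own proof here: the proposition is stated as an immediate consequence of the proof of \cite[Theorem~5.1]{Ish02}, and the paper simply cites that. Your outline goes further and has the right architecture --- kill $H^{-2}$ by torsion-freeness, kill $H^0$ by right-exactness of pull-back (or identify it with $\sO_E$ via $\mathfrak m\,\sO_Y=\sO_Y(-E)$ when $\rho=\rho_0$), and then analyse the single remaining cohomology $H^{-1}$ --- and you correctly recognise that the last step ultimately comes down to Ishii's local computation. Two points, however, are genuine gaps rather than bookkeeping.

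First, the Chern-class step is not as immediate as you suggest. Wunram's multiplication formula $c_1(\widetilde{M_{\rho_i}})\cdot E_j=\delta_{ij}$ is stated only for \emph{special} $\rho_i$. When you decompose $N_\rho=\bigoplus_\sigma M_\sigma^{m_\sigma}$ via the McKay quiver, the summands $M_\sigma$ that appear are in general \emph{not} special, and for such $\sigma$ the intersection numbers $c_1(\widetilde{M_\sigma})\cdot E_j$ are not given by the formula (they are typically positive, determined by Wunram's more detailed description of $\widetilde{M_\sigma}$ in the cyclic case, and by Ishii's analysis in general). So you cannot evaluate $c_1(H^{-1})\cdot E_j$, let alone $\ch_2(H^{-1})$, from the special multiplication formula alone; computing these numbers is a large part of what \cite[\S5]{Ish02} actually does.

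Second, for $\rho=\rho_0$ you write that ``the injectivity argument again kills $H^{-1}$'', but injectivity of $\alpha$ only kills $H^{-2}$. Vanishing of $H^{-1}$ means that $\alpha$ surjects onto $\ker\beta$, which your argument does not address. In this rank-one case one can finish by identifying $\ker\beta$ explicitly (it is the saturation of the image of $\widetilde{\tau(R)}\to\widetilde{N_{\rho_0}}$), or by a numerical argument analogous to the non-trivial case; either way it needs input beyond what you wrote.
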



\subsubsection{Derived McKay correspondence}

In the above setting, there is a full and faithful functor $\phi\colon D(Y) \to D[\C^2/G]$ constructed as follows. One observes that $Y$ can be constructed as the $G$-Hilbert scheme parametrizing $G$-\textit{clusters} of $\C^2$ (i.e. finite, $G$-invariant subschemes $Z\subset \C^2$ such that $H^0(\sO_Z)$ is isomorphic as a $\CC[G]$-module to the regular representation of $G$) \cite[Theorem 3.1, Theorem 8.1]{Ish02}. Thus, $Y\times \C^2$ admits a universal family $Z$, and one can define the diagram
\begin{equation} \label{eq:diagram}
\begin{tikzcd}
 Z \rar[phantom,"\subseteq"] & Y\times \C^2 \rar{\pi_{\C^2}} \dar{\pi_Y} &  \C^2 \dar \\
&Y \rar{f} & X
\end{tikzcd}
\end{equation}
where we let $G$ act trivially on $X$ and $Y$, so that all morphisms are equivariant. Define the Fourier--Mukai functor 
\begin{equation}
 \label{eq:phi}	\phi(-)\coloneqq \dR\pi_{\C^2*}\left(\pi_Y^*(-  \otimes \rho_0)\otimes^{\dL}\sO_Z\right) \colon D(Y) \to D([\C^2/G]).
 \end{equation}
The functor $\phi$ is an equivalence if and only if $Y$ is a crepant resolution, which happens if and only if  $G\subset \SL(2,\C)$, equivalently $R$ is Gorenstein.
Even in the $G\subset \GL(2,\C)$ case, $\phi$ admits left and right adoints. So its essential image is admissible and there is a semiorthogonal decomposition: 
\begin{equation} \label{eq:LocalSOD}
	D([\C^2/G]) = \left \langle \phi(D(Y))^\perp, \phi(D( Y))  \right\rangle.
\end{equation}

The essential image of $\phi$ and its right orthogonal are described as follows.
Since the functor $(  -  )^G$ of taking invariant part defines an equivalence of categories between projective $S\ast G$-modules and CM $R$-modules \cite[Sec. 2]{Aus86}\if  Prop. 2.2 \fi, the $\{ S\otimes \rho \}_{\rho\in \mathrm{Irrep}(G)}$ are projective generators of $\mathrm{mod-}S\ast G$ and hence they generate $D([\C^2/G])$. 

\begin{proposition}{\cite[Prop. 1.1]{IU15}} \label{prop:1.1}
\footnote{In Section \ref{ssec:alternative_pf} we provide an alternative proof of Proposition \ref{prop:1.1} based on our Theorem \ref{thm:psiO0rho_general}.}
	The essential image of $\phi$ in $D([\C^2/G])$ is generated by the modules $\{ S\otimes \rho \}$ where $\rho$ is a \textit{special} representation. The right orthogonal of $\phi(D(Y))$ in $D([\C^2/G])$ is generated by sheaves $\sO_0\otimes\rho$ where $\sO_0 = S/(x,y)$ and $\rho$ is not special.
\end{proposition}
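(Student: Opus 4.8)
The plan is to derive both assertions from Theorem~\ref{thm:calcolo psi} by using the adjoint $\psi$ to probe the decomposition~\eqref{eq:LocalSOD}. The one genuine subtlety is that $\psi=\phi^{L}$ is the \emph{left} adjoint, so that a vanishing $\psi(B)=0$ detects membership in the \emph{left} orthogonal ${}^{\perp}\phi(D(Y))$, whereas the statement concerns the \emph{right} orthogonal $\phi(D(Y))^{\perp}$. Reconciling the two — which is exactly where a twist by $\rdet$ enters — is the heart of the argument.

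First I would reduce the right-orthogonality of $\sO_0\otimes\rho$ to a vanishing of $\psi$. Since $\sO_0\otimes\rho$ has zero-dimensional, hence proper, support, local Serre duality is available on $[\C^2/G]$, and the canonical bundle $\omega_{[\C^2/G]}=\Lambda^2\Omega^1_{\C^2}$ carries the character $\rdet=\Lambda^2\rnat$. For every $a\in D(Y)$ one then has $\Hom^{\bullet}(\phi a,\sO_0\otimes\rho)\simeq \Hom^{\bullet}(\sO_0\otimes\rho,\phi a\otimes\rdet\,[2])^{\ast}\simeq \Hom^{\bullet}(\sO_0\otimes\rho\otimes\rdet^{-1},\phi a)^{\ast}\simeq \Hom^{\bullet}(\psi(\sO_0\otimes\rho\otimes\rdet^{-1}),a)^{\ast}$, the last step by the adjunction $\psi\dashv\phi$. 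Thus $\sO_0\otimes\rho\in\phi(D(Y))^{\perp}$ if and only if $\psi(\sO_0\otimes\rho\otimes\rdet^{-1})=0$. Applying Theorem~\ref{thm:calcolo psi} with $\sigma=\rho\otimes\rdet^{-1}$, and noting $\sigma\otimes\rdet=\rho$, the formula~\eqref{eq:formula_calcoloPsi} gives $\psi(\sO_0\otimes\sigma)=0$ precisely when $\rho$ is not special (the two nonzero cases $\sigma=\rdet^{\ast}$ and $\sigma\otimes\rdet=\rho_i$ correspond exactly to $\rho=\rho_0,\dots,\rho_m$). This proves $\sO_0\otimes\rho\in\phi(D(Y))^{\perp}$ if and only if $\rho$ is not special.

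For the essential image I would compute, passing to $G$-equivariant sheaves on $\C^2$ and using that $\sO_{\C^2}$ is free over itself, $\Hom^{\bullet}_{[\C^2/G]}(S\otimes\rho_i,\sO_0\otimes\rho)\simeq\Hom_G(\rho_i,\rho)$, concentrated in degree $0$. For $\rho_i$ special and $\rho$ not special this vanishes, so each $S\otimes\rho_i$ lies in ${}^{\perp}(\phi(D(Y))^{\perp})=\phi(D(Y))$, while a non-special $S\otimes\rho_j$ pairs nontrivially with $\sO_0\otimes\rho_j\in\phi(D(Y))^{\perp}$ and is therefore excluded. To upgrade containments to equalities I would argue by cardinality against~\eqref{eq:LocalSOD}: the $\{S\otimes\rho\}_{\rho}$ generate $D([\C^2/G])$, the $m+1$ special modules are semiorthogonal to the $n-m$ objects $\sO_0\otimes\rho$ with $\rho$ not special, and these two families have exactly the sizes of the two factors of~\eqref{eq:LocalSOD}; hence the semiorthogonal subcategory they generate is everything and must coincide with~\eqref{eq:LocalSOD} termwise, giving $\phi(D(Y))=\langle S\otimes\rho:\rho\ \text{special}\rangle$ and $\phi(D(Y))^{\perp}=\langle \sO_0\otimes\rho:\rho\ \text{not special}\rangle$.

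I expect the main obstacle to be the duality reduction of the second paragraph: matching the $\rdet$-twist produced by $\omega_{[\C^2/G]}$ against the $\rdet$-twist already built into Theorem~\ref{thm:calcolo psi}, so that the condition ``$\sigma\otimes\rdet$ not special'' becomes ``$\rho$ not special''. This is precisely the point at which the left/right adjoint discrepancy is absorbed, and it must be justified with care on the non-proper stack $[\C^2/G]$, where local Serre duality applies only because $\sO_0\otimes\rho$ is supported at the origin.
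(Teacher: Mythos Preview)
Your duality reduction in the second paragraph is correct and equivalent to what the paper does: the paper uses the right adjoint $\theta$ directly, together with the identity $\theta(-\otimes K_{\C^2})=\psi(-)\otimes K_Y$, which is exactly the Serre--duality relation between left and right adjoints of a Fourier--Mukai functor. So your identification ``$\psi(\sO_0\otimes\rho\otimes\rdet^{-1})=0 \Leftrightarrow \rho$ not special'' via Theorem~\ref{thm:calcolo psi} is fine, and the concern you flag at the end is not where the trouble lies.

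The genuine gap is in your third paragraph. At the moment you write ``so each $S\otimes\rho_i$ lies in ${}^{\perp}(\phi(D(Y))^{\perp})=\phi(D(Y))$'', you have only checked orthogonality of $S\otimes\rho_i$ against the non-special $\sO_0\otimes\rho$, not against \emph{all} of $\phi(D(Y))^{\perp}$; invoking the equality ${}^{\perp}(\phi(D(Y))^{\perp})=\phi(D(Y))$ here is circular, since generation of $\phi(D(Y))^{\perp}$ by those simples is exactly what remains to be proved. The subsequent ``cardinality'' argument does not repair this: in a triangulated category there is no counting principle that converts a K-theoretic rank match into generation, and nothing in your setup rules out a proper thick subcategory with the same K-rank. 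Knowing that $n-m$ independent classes lie in $\phi(D(Y))^{\perp}$ does not force them to generate it, nor does it force the special $S\otimes\rho$ to generate the complement.

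The paper supplies two ingredients you are missing. For $\phi(D(Y))^{\perp}$, it first shows via a support argument (using \cite[Lemma~5.3]{BM02}) that every object of $\ker\theta$ is supported at the origin, and then uses exactness of $\theta$ on $\Coh_G(\C^2)_0$ (Remark~\ref{rem:perv}) to conclude that a complex lies in $\ker\theta$ iff all composition factors of its cohomologies are non-special $\sO_0\otimes\rho$; this gives generation outright. For $\phi(D(Y))$, it computes $\psi(\sO_{\C^2}\otimes\rho)=\sR_\rho^\vee$ and invokes Van den Bergh's theorem \cite[Theorem~B]{VdB04} that the $\sR_\rho^\vee$ for special $\rho$ generate $D(Y)$. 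Without at least one of these --- a support constraint on $\phi(D(Y))^{\perp}$, or an external generation statement for $D(Y)$ --- your upgrade from containment to equality does not go through.
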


\begin{remark} \label{rem:quiver}
Let $\mu$ be an irreducible representation of $G$ and write $a_{\mu \nu}$ for the multiplicity of $\nu \in \mathrm{Irrep}(G)$ in the decomposition of $\mu \otimes \rho_{\mathrm{nat}}$ into irreducible representations. Then by \eqref{eq:local_Koszul} we have
\[
\begin{split}
&\dim \Hom(\mathcal{O}_0 \otimes \mu, \mathcal{O}_0 \otimes \nu)=\delta_{\mu \nu}\\
&\dim \Ext^1(\mathcal{O}_0 \otimes \mu, \mathcal{O}_0 \otimes\nu)=a_{\mu \nu}\\
& \dim \Ext^2(\mathcal{O}_0 \otimes\mu, \mathcal{O}_0 \otimes\nu)=\dim \Hom(\mathcal{O}_0 \otimes\nu, \mathcal{O}_0 \otimes\mu \otimes \rho_{\mathrm{det}})	
\end{split}
\]
This information is summarised in the \emph{McKay quiver of $G$}, whose vertices represent 
the irreducible representation of $G$, and whose solid (resp. dashed) arrows from $\mu$ to $\nu$ denote a basis of $\Ext^1(\mathcal{O}_0 \otimes\mu, \mathcal{O}_0 \otimes\nu)$  (resp. $\Ext^2(\mathcal{O}_0 \otimes\mu, \mathcal{O}_0 \otimes\nu)$).  We give two examples in Figure \ref{fig:quiver_1/5(1,1)} and Figure \ref{fig:quiver_1/7(1,3)}. \end{remark}


\subsubsection{Cyclic singularities}
\label{sssec:cyclic_singularities}
For relatively prime integers $1\leq a <r$, define the group $G=\frac {1}{r}(1,a)$ as the cyclic small subgroup of $\GL(2, \CC)$ generated by 
\[ \zeta\coloneqq \begin{pmatrix}
\epsilon & 0 \\ 0 & \epsilon^a
\end{pmatrix}  \]
where $\epsilon$ is a primitive $r$-th root of unity. We use the same notation $\frac 1r(1,a)$ for the corresponding cyclic quotient singularity $\Spec R$.

\begin{definition}
\label{def:iseries}
For integers $1\leq a<r$ as above, 
set $i_0=r$, $i_1=a$, and define $i_t$ inductively by
\begin{equation} \label{eq:iseries}
i_t \coloneqq  \alpha_{t-1}i_{t-1}-i_{t-2}
	\end{equation}
where $\alpha_{t-1} \geq 2$ and $0\leq i_t<i_{t-1}$, until $i_n=1$ and $i_{n+1}=0$. 
The $I$-series $I(r,a)$ is the sequence $I(r,a)\coloneqq \{i_0=r,...,i_{n+1}=0\}$.
\end{definition}

The equations \eqref{eq:iseries} can be rearranged and spliced together to give the Hirzebruch--Jung continued fraction expansion of $\frac ra$, i.e. the expression:
\begin{equation}
	\frac ra = \alpha_1 - \frac{1}{\alpha_2-\frac{1}{\alpha_3-\frac{1}{(...)}}} \eqqcolon [\alpha_1,...,\alpha_n] 
\end{equation}
It is well-known \cite{Riem74} that the dual graph of the minimal resolution $Y$ of $\frac 1r(1,a)$ is the labeled diagram

\begin{center}
\begin{tikzpicture}[scale=0.8]
	\fill (-4,0) circle (3pt);
	\fill (-2,0) circle (3pt);
	\fill (2,0) circle (3pt);
	\fill (4,0) circle (3pt);

	\draw[line width=1.0pt] (-3.7,0) -- (-2.3,0);
	\draw[line width=1.0pt] (-1.7,0) -- (-1,0);
	\draw[line width=1.0pt] (1.7,0) -- (1,0);	
	\draw[line width=1.0pt] (3.7,0) -- (2.3,0);

	\node at (-4,-0.5) {$-\alpha_1$};
	\node at (-2,-0.5) {$-\alpha_2$};
	\node at (0,0) {$\cdots$};
	\node at (2,-0.5) {$-\alpha_{n-1}$};
	\node at (4,-0.5) {$-\alpha_n$};
\end{tikzpicture}
\end{center}
\medskip

For $l\in \ZZ$, denote by $\rho_l$ the irreducible representation of $G$ acting with weight $l$.
Then the modules
\[ M_l \coloneqq(\C[x,y]\otimes \rho_l^*)^G = \left\lbrace f\in \C[x,y] \mid \zeta\cdot f = \epsilon^l f \right\rbrace\]
are the indecomposable CM $R$-modules\footnote{Clearly, $\rho_l=\rho_m$ and $M_l =M_m$ if $l \equiv m \mod r$.}, 
and $M_l$ is special if $l\in I(r,a)$ \cite{Wun87}. In \cite[Appendix A1]{Wun88}) it is shown that their torsion free pull-backs $\widetilde{M}_{l}$ satisfy
\begin{equation} \label{eq:formula-cyclic} c_1(\widetilde{M}_l) \cdot E_t=\delta_{l, i_t}.
\end{equation}

Moreover, since in this case $\rnat= \rho_1\oplus \rho_a$ and $\rdet=\rho_{a+1}$, the AR-sequences \eqref{eq:ARSeq} specialise as follows:
\begin{equation}
\label{eq:ARSeq-cycl}
\begin{split}
A_l \coloneqq  \left[  0\to M_{l-a-1} \to  M_{l-1} \oplus M_{l-a}  \to M_l \to 0 \right],
\end{split}
\end{equation}
and by \eqref{eq:formula-cyclic} Proposition \ref{prop:Ishii5.1} rewrites as: 

\begin{proposition}
\label{prop:Ishii5.1-cyclic}
Let $l\in \{0, \dots, r-1\}$. Denote by $E$ the fundamental cycle of $f \colon Y \to X$ and  by $\widetilde{A_l}\in D^b(Y)$ the torsion-free pull-back of the AR-sequence ending in $M_l$. Then we have
\begin{align}
\widetilde{A_l} \simeq \begin{cases}
\sO_E & \mbox{ if } l=0 \\
\sO_{E_{t}}(-1)[1] & \mbox{ if } l=i_t \in I(r,a)\setminus\{0\} \\
0 & \mbox{ otherwise.}
\end{cases}
\end{align}
\end{proposition}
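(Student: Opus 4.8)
The plan is to obtain the statement as a direct translation of Proposition \ref{prop:Ishii5.1} into the language of the cyclic group $G=\frac1r(1,a)$, using the dictionary between the irreducible representations $\rho_l$ and the modules $M_l$, together with the multiplication formula \eqref{eq:formula-cyclic}. Since Proposition \ref{prop:Ishii5.1} already computes $\widetilde{A_\rho}$ for an arbitrary $\rho\in\mathrm{Irrep}(G)$, the entire task is to identify $\widetilde{A_l}$ with $\widetilde{A_{\rho_l}}$ and to rewrite the three cases of that proposition in terms of the $I$-series.

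First I would verify that the sequence \eqref{eq:ARSeq-cycl} really is the specialization of the general AR-sequence \eqref{eq:ARSeq} at $\rho=\rho_l$. Here $\rnat=\rho_1\oplus\rho_a$ and $\rdet=\rho_{a+1}$, and since $\rho_l^*=\rho_{-l}$ one has the basic identity
\[
(S\otimes\rho_j\otimes\rho_l^*)^G=(S\otimes\rho_{j-l})^G=(S\otimes\rho_{l-j}^*)^G=M_{l-j}.
\]
Applying this with $j=1,a$ gives $N_{\rho_l}=(S\otimes\rnat\otimes\rho_l^*)^G=M_{l-1}\oplus M_{l-a}$, and with $j=a+1$ gives $\tau(M_{\rho_l})=(S\otimes\rdet\otimes\rho_l^*)^G=M_{l-a-1}$. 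Thus \eqref{eq:ARSeq-cycl} is exactly \eqref{eq:ARSeq} for $\rho=\rho_l$, so $\widetilde{A_l}=\widetilde{A_{\rho_l}}$ and Proposition \ref{prop:Ishii5.1} applies verbatim.

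Next I would translate the three cases. The case $\rho=\rho_0$ becomes $l\equiv 0$, i.e. $l=0$ in the range $\{0,\dots,r-1\}$, giving $\sO_E$. For the middle case I invoke Wunram's classification \cite{Wun87,Wun88}: $M_l$ is special precisely when $l\in I(r,a)$, and by \eqref{eq:formula-cyclic} the special module dual to the exceptional curve $E_t$ — that is, the module $\rho_t$ of Proposition \ref{prop:Ishii5.1}, characterized by $c_1(\widetilde{M}_{\rho_t})\cdot E_j=\delta_{tj}$ — is precisely $M_{i_t}$. Hence $\rho_l=\rho_t$ if and only if $l=i_t$, the nonzero special values of $l$ are exactly $I(r,a)\setminus\{0\}=\{i_1,\dots,i_n\}$, and for each such $l=i_t$ one reads off $\sO_{E_t}(-1)[1]$. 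All remaining $l$ (those with $l\notin I(r,a)$) correspond to non-special $\rho_l$, for which Proposition \ref{prop:Ishii5.1} returns $0$.

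The only genuinely delicate point is the bookkeeping in the middle case: one must be sure that the curve called $E_t$ in \eqref{eq:formula-cyclic} coincides with the curve labeled $E_t$ in Proposition \ref{prop:Ishii5.1}, i.e. that the $I$-series index $t$ of $i_t$ matches the index of the exceptional curve whose dual is $\rho_t$. This is exactly what the multiplication formula $c_1(\widetilde{M}_l)\cdot E_t=\delta_{l,i_t}$ enforces: it simultaneously pins down $\rho_t=M_{i_t}$ and confirms that there are $m=n$ nontrivial special representations, one for each exceptional curve of the Hirzebruch--Jung string $[\alpha_1,\dots,\alpha_n]$. Once this identification is in place, the three cases match line by line, completing the proof.
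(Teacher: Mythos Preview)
Your proposal is correct and follows exactly the paper's approach: the paper states Proposition~\ref{prop:Ishii5.1-cyclic} simply as the specialization of Proposition~\ref{prop:Ishii5.1} to the cyclic case, noting that $\rnat=\rho_1\oplus\rho_a$ and $\rdet=\rho_{a+1}$ yield \eqref{eq:ARSeq-cycl}, and that \eqref{eq:formula-cyclic} identifies the special $\rho_l$ with the $l=i_t\in I(r,a)$. Your write-up just makes explicit the verifications that the paper leaves to the reader.
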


In the cyclic case,  
the right orthogonal $\phi(D(Y))^\perp$ admits a full exceptional collection. Every non-special representation $\rho$ contributes one exceptional sheaf, which has finite length and socle $\sO_{0}\otimes \rho$ \cite[Sec. 3]{IU15}. We do not recall here the construction of \cite{IU15}, but we point out below that the exceptional sheaves are simple in the two cases we consider in this paper, namely $G=\frac 1n(1,1)$, $n>1$ integer, and $G=\frac{1}{2k+1}(1,k)$, $k \geq 1$ integer.

\begin{example} \label{exa:1n11}
Let $G=\frac 1n(1,1)$, for $n>1$ integer. Then $I(n,1)=\{n,1,0\}$ and $\frac{n}{1}=[n]$. It follows that the minimal resolution $Y=\mathrm{Tot}\left( \mathcal{O}_{\P^1}(-n) \right)$ has a single exceptional $(-n)$-curve, $\rho_0$ and $\rho_1$ are the only special representations, and the exceptional collection computed in \cite{IU15} reads 
\begin{equation} \label{eq:IU_1n11}
 \phi(D( Y))^\perp = \left( \sO_0\otimes\rho_{2},...,\sO_0\otimes\rho_{n-1} \right).  
 \end{equation}
 For all $i$, the McKay quiver of $G$ has two solid arrows from $\rho_i$ to $\rho_{i+1}$, and one dashed arrow from $\rho_i$ to $\rho_{i+1}$ (see Figure \ref{fig:quiver_1/5(1,1)}).
\end{example}

\begin{example}  \label{exa:12k+1}
Let $G=\frac{1}{2k+1}(1,k)$, for $k \geq 1$ integer. One computes
\[ \frac{2k+1}{k}=[3,\underbrace{2,...,2}_{k-1 \text{ times}}], \]
and the relevant terms of the $i$-series are $i_t = k+1-t$ for $t=1,...,k+1$. It follows that the minimal resolution contains one $(-3)$-curve $E_1$ and $k-1 \times (-2)$-curves $E_i$, $i\in \{2, \dots k\}$, and the special representations are $\rho_{i_t}=\rho_{k+1-t}$, $t=1,...,k+1$. The exceptional collection computed in \cite{IU15} reads 
\[  \phi(D( Y))^\perp = \left( \sO_0\otimes\rho_{k+1},...,\sO_0\otimes\rho_{2k} \right).  \]
For all $i$, the McKay quiver of $G$ has a solid arrow from $\rho_i$ to $\rho_{i+1}$, a solid arrow from $\rho_i $ to $\rho_{i+k}$, and a dashed arrow from $\rho_i$ to $\rho_{i+k+1}$. \end{example}
\medskip

Figure \ref{fig:quiver_1/5(1,1)} and \ref{fig:quiver_1/7(1,3)} depict the McKay quiver of $\frac 15(1,1)$ and of  $\frac 17(1,3)$ , with objects and morphisms in $\phi(D(Y))^\perp$ colored in blue.
\begin{figure}[!ht]
\definecolor{uuuuuu}{rgb}{0.26666666666666666,0.26666666666666666,0.26666666666666666}
\definecolor{zzttqq}{rgb}{0.6,0.2,0}
\definecolor{qqqqff}{rgb}{0,0,1}
\definecolor{uuuuuu}{rgb}{0.26666666666666666,0.26666666666666666,0.26666666666666666}
\definecolor{qqqqff}{rgb}{0,0,1}
\definecolor{ududff}{rgb}{0.30196078431372547,0.30196078431372547,1}
\noindent
\minipage{0.48\textwidth}%
\centering
\begin{tikzpicture}[line cap=round,line join=round,>=triangle 45,x=1cm,y=1cm, scale=1.5]
\clip(-4,-2) rectangle (0.2,2.335171284442755);
\draw [->,line width=0.4pt] (-3.5968826033715913,0.2561305573032108) -- (-3.114311074135772,-1.1501689777097202);
\draw [->,line width=0.4pt] (-3.427201152734883,0.34913538672724864) -- (-2.9399665435658475,-1.0777659686963625);
\draw [->,line width=0.4pt] (-2.2722038100784587,1.7075436242735051) -- (-3.4032841527922915,0.8983861483320672);
\draw [->,line width=0.4pt] (-2.144396508570919,1.5324194375663407) -- (-3.275476851284752,0.7058607255831514);
\draw [->,line width=0.4pt] (-0.4020880502757565,0.8613988221285287) -- (-1.5331683929895892,1.7749637143204748);
\draw [->,line width=0.4pt] (-0.517380938667175,0.7319625796457784) -- (-1.6310600453392567,1.6281262357959732);
\draw [->,line width=0.4pt,color=qqqqff] (-0.606533174536701,-1.1843582390669056) -- (-0.19760412755554602,0.27734558844020785);
\draw [->,line width=0.4pt,color=qqqqff] (-0.7762146251734088,-1.0919093227876933) -- (-0.384686814234005,0.32629141461504174);
\draw [->,line width=0.4pt,color=qqqqff] (-2.6055292636773277,-1.3387845093226325) -- (-1.1960291442954747,-1.330083891301757);
\draw [->,line width=0.4pt,color=qqqqff] (-2.5897569686107866,-1.538381609117214) -- (-1.1802568492289336,-1.5296809910963383);
\draw [->,line width=0.4pt,dash pattern=on 3pt off 3pt,color=qqqqff] (-2.727337915969585,-1.1821733849468705) -- (-0.4651772305419195,0.46224342099863236);
\draw [->,line width=0.4pt,dash pattern=on 3pt off 3pt] (-0.517380938667173,0.5840520732908919) -- (-3.345081795451755,0.6014533093326433);
\draw [->,line width=0.4pt,dash pattern=on 3pt off 3pt] (-1.9442822940907776,1.523718819545465) -- (-2.823044714199217,-1.156071530884243);
\draw [->,line width=0.4pt,dash pattern=on 3pt off 3pt] (-3.345081795451755,0.5144471291238865) -- (-1.0133161658570846,-1.199574620988622);
\draw [->,line width=0.4pt,dash pattern=on 3pt off 3pt] (-0.9611124577318305,-1.138670294842492) -- (-1.8224736417985186,1.549820673608092);
\begin{scriptsize}
\draw [fill=qqqqff] (-2.8895204560046355,-1.3116224835819166) circle (1.5pt);
\draw[color=qqqqff] (-3.0013827511814376,-1.5375684194198596) node {$\rho_2$};
\draw [fill=qqqqff] (-0.8895204560046355,-1.3116224835819166) circle (1.5pt);
\draw[color=qqqqff] (-0.7321524311071531,-1.5375684194198596) node {$\rho_3$};
\draw [fill=qqqqff] (-0.3714864672547408,0.5904905490083897) circle (1.5pt);
\draw[color=qqqqff] (-0.04934659040824107,0.6125011533665175) node {$\rho_4$};
\draw [fill=uuuuuu] (-1.889520456004635,1.7660610535933359) circle (1.5pt);
\draw[color=uuuuuu] (-1.8060508206876396,2.0231914690886532) node {$\rho_0$};
\draw [fill=uuuuuu] (-3.50755444475453,0.5904905490083907) circle (1.5pt);
\draw[color=uuuuuu] (-3.776618654154343,0.6125011533665175) node {$\rho_1$};
\end{scriptsize}
\end{tikzpicture}
\caption{
\label{fig:quiver_1/5(1,1)} The case $\frac 15(1,1)$.}
\endminipage\hfill
\minipage{0.48\textwidth}
\centering
\begin{tikzpicture}[line cap=round,line join=round,>=triangle 45,x=1cm,y=1cm,scale=0.78]
\clip(-5,-3) rectangle (4,5.148);
\draw [->,line width=0.4pt] (-1.56,-1.99) -- (0.6,-2.01);
\draw [->,line width=0.4pt,color=qqqqff] (1.42,-1.73) -- (2.68,0.01);
\draw [->,line width=0.4pt,color=qqqqff] (2.76,0.75) -- (2.26,2.93);
\draw [->,line width=0.4pt] (1.86,3.43) -- (-0.18,4.41);
\draw [->,line width=0.4pt] (-0.86,4.45) -- (-2.92,3.39);
\draw [->,line width=0.4pt] (-3.36,3.01) -- (-3.78,0.89);
\draw [->,line width=0.4pt] (-3.76,-0.01) -- (-2.22,-1.79);
\draw [->,line width=0.4pt] (-0.5733333333333335,4.06) -- (-1.813333333333334,-1.393333333333337);
\draw [->,line width=0.4pt] (-2.84,2.9) -- (0.8266666666666671,-1.6066666666666705);
\draw [->,line width=0.4pt] (-3.42,0.37) -- (2.413333333333334,0.3533333333333304);
\draw [->,line width=0.4pt] (-1.64,-1.5) -- (1.9733333333333334,2.913333333333334);
\draw [->,line width=0.4pt] (1.0133333333333336,-1.5533333333333368) -- (-0.3866666666666667,4.046666666666665);
\draw [->,line width=0.4pt] (2.4666666666666677,0.5533333333333305) -- (-2.68,3.06);
\draw [->,line width=0.4pt] (1.853333333333334,3.0866666666666647) -- (-3.44,0.5666666666666638);
\draw [->,line width=0.4pt,dash pattern=on 3pt off 3pt] (-0.14666666666666672,4.086666666666665) -- (1.226666666666667,-1.4333333333333371);
\draw [->,line width=0.4pt,dash pattern=on 3pt off 3pt] (-2.52,3.246666666666665) -- (2.5066666666666673,0.78);
\draw [->,line width=0.4pt,dash pattern=on 3pt off 3pt] (-3.5066666666666677,0.82) -- (1.666666666666667,3.246666666666665);
\draw [->,line width=0.4pt,dash pattern=on 3pt off 3pt] (-2.093333333333334,-1.2466666666666701) -- (-0.8266666666666669,4.086666666666666);
\draw [->,line width=0.4pt,dash pattern=on 3pt off 3pt] (0.5857031888939774,-1.7079768998850313) -- (-3,2.7133333333333307);
\draw [->,line width=0.4pt,dash pattern=on 3pt off 3pt] (2.44,0.1) -- (-3.426666666666667,0.15333333333333035);
\draw [->,line width=0.4pt,dash pattern=on 3pt off 3pt] (2.08,2.6466666666666643) -- (-1.373333333333334,-1.58);


\begin{scriptsize}
\draw [fill=black] (-2,-2) circle (2pt);
\draw[color=black] (-2.066753785443086,-2.493890077466386) node {$\rho_3$};
\draw [fill=ududff] (1,-2) circle (2pt);
\draw[color=ududff] (1.2814402811950834,-2.4938861821625687) node {$\rho_4$};
\draw [fill=qqqqff] (2.8704694055762,0.34549444740408874) circle (2pt);
\draw[color=qqqqff] (3.321292837301929,0.074735242159865) node {$\rho_5$};
\draw [fill=qqqqff] (2.2029066037072575,3.2702781839495594) circle (2pt);
\draw[color=qqqqff] (2.613970977478715,3.530950149261156) node {$\rho_6$};
\draw [fill=uuuuuu] (-0.5,4.571929401302234) circle (2pt);
\draw[color=uuuuuu] (-0.3868643683629522,5.0400910138549) node {$\rho_0$};
\draw [fill=uuuuuu] (-3.2029066037072567,3.2702781839495603) circle (2pt);
\draw[color=uuuuuu] (-3.5387562258860175,3.530950149261156) node {$\rho_1$};
\draw [fill=uuuuuu] (-3.8704694055762006,0.34549444740408997) circle (2pt);
\draw[color=uuuuuu] (-4.483436806505735,0.074735242159865) node {$\rho_2$};
\end{scriptsize}
\end{tikzpicture}
\caption{\label{fig:quiver_1/7(1,3)} The case $\frac{1}{7}(1,3)$.} 
\endminipage
\end{figure}


\subsubsection{Global case} \label{ssec:global-case}
Suppose $X$ is a complex quasi-projective surface with isolated  
quotient singularities, i.e. every singular point of $X$ is \'etale-locally isomorphic to a quotient $\C^2/G$, where $G$ is a finite subgroup of $\GL(2,\C)$. 
Let $\tilde{X}$ be the minimal resolution of $X$, and denote by $\sX$ the canonical stack associated to $X$.
By \cite[Theorem 1.4]{IU15}, there is a fully faithful functor $\Phi\colon D(\tilde{X}) \to D(\sX)$.

Suppose that $X$ has only cyclic quotient singularities and consider the diagram 
\begin{equation*} 
\begin{tikzcd}
\mathcal \sZ \rar[phantom,"\subseteq"] & \tilde{X}\times \sX \rar{\pi_{\sX}} \dar{\pi_{\tilde{X}}} &  \sX \dar{s} \\
&\tilde{X} \rar{f} & X
\end{tikzcd}
\end{equation*}
where $\mathcal Z$ is the substack $(\tilde{X} \times_X \sX)_{\mathrm{red}}$. In this case, \cite[Prop. 8.1]{IU15} shows that the Fourier-Mukai functor $\Phi=\pi_{\sX *}(\pi_{\tilde{X}}^*(-)\otimes \sO_\mathcal Z)$ is fully faithful.
Over the smooth locus of $X$, $\mathcal Z$ is the graph of $s$. On a singular chart of $X$, $\mathcal Z$ restricts to the quotient $[Z/G]$ of the universal subscheme of diagram \eqref{eq:diagram}. This yields a semiorthogonal decomposition
\[D(\sX) = \left\langle \textbf{e}_1,...,\textbf{e}_N,\Phi(D(\widetilde{X})) \right\rangle,\]
where $(\textbf{e}_1,...,\textbf{e}_N)$ is an exceptional collection, containing the local contributions described in Section \ref{sssec:cyclic_singularities}.


\section{Computation of left adjoint functor}
\label{sec:AdjointComputation}
Consider the fully faithful functor $\phi \colon D(Y) \to D([\C^2/G])$ of Equation \eqref{eq:phi} and the semiorthogonal decomposition \eqref{eq:LocalSOD}. By Proposition \ref{prop:1.1}, $\phi(D(Y))^\perp$ is generated by simple sheaves of the form $\sO_0\otimes \rho$, where $\rho$ is nonspecial. In this Section we introduce the functor $\psi\colon D([\C^2/G])\to D(Y)$ left adjoint to $\phi$, and compute the images $\psi(\sO_0\otimes \rho)$ for all $\rho$.
\smallskip

We follow the same setup and notation of Section \ref{ssec:gl2}.
The left adjoint of $\phi$ is:
\begin{equation} 
\psi(-)= [\dR\pi_{Y *} \left( \pi_{\C^2 }^*(- \otimes K_{\C^2}) \otimes^{\dL} \sO_\sZ^\vee \right) ]^G  [2]
\end{equation}
where $K_{\C^2}=\sO_{\C^2}\otimes \rho_{\det}$ is the $G$-equivariant canonical sheaf of $\CC^2$.

\begin{thm}
\label{thm:psiO0rho_general}
For $i \in \{1, \dots, m\}$ let $-\alpha_i$ be the self intersection of the exceptional curve $E_i$. Then we have:
\begin{align*}
\psi(\sO_0\otimes \rho) \simeq \begin{cases}
\sO_E(E)[1]& \mbox{ if } \rho=\rdet^\ast \\
\sO_{E_{i}}(-\alpha_{i}+1) & \mbox{ if } \rho\otimes \rdet=\rho_i \mbox{ for some }1\leq i\leq m \\
0 & \mbox{ otherwise}.
\end{cases}
\end{align*}
\end{thm}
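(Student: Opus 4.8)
The plan is to reduce the statement to Ishii's computation of the torsion-free pullbacks of Auslander--Reiten sequences (Proposition \ref{prop:Ishii5.1}), using the equivariant Koszul complex to resolve $\sO_0\otimes\rho$ and Grothendieck--Serre duality on $Y$ to account for the passage from $\phi$ to its left adjoint. First I would tensor the equivariant Koszul complex \eqref{eq:local_Koszul} by $\rho$, which exhibits $\sO_0\otimes\rho$ in $D([\C^2/G])$ as the convolution of
\[
\sO_{\C^2}\otimes\rdet\otimes\rho \xrightarrow{(y,-x)^{T}} \sO_{\C^2}\otimes\rnat\otimes\rho \xrightarrow{(x,y)} \sO_{\C^2}\otimes\rho ,
\]
a complex of projective generators placed in degrees $-2,-1,0$. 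Since $\psi$ is triangulated, $\psi(\sO_0\otimes\rho)$ is the convolution of $\psi$ applied termwise, so the computation reduces to evaluating $\psi$ on the free modules $\sO_{\C^2}\otimes\mu$.

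The key step is to compare $\psi$ with the ``tautological'' Fourier--Mukai functor
\[
\Theta(-)\coloneqq \bigl[\dR\pi_{Y *}\bigl(\pi_{\C^2}^{*}(-)\otimes^{\dL}\sO_{\sZ}\bigr)\bigr]^{G}\colon D([\C^2/G])\to D(Y),
\]
which sends each projective $\sO_{\C^2}\otimes\mu$ to the corresponding full sheaf and hence carries the Koszul resolution of $\sO_0\otimes\nu$ to the torsion-free pullback $\widetilde{A_\nu}$ of Proposition \ref{prop:Ishii5.1} (this is the content of the proof of \cite[Thm.~5.1]{Ish02}). Inspecting the kernel of the left adjoint, $\psi$ is obtained from $\Theta$ by replacing $\sO_{\sZ}$ with its derived dual $\sO_{\sZ}^{\vee}$, twisting by $\pi_{\C^2}^{*}\omega_{[\C^2/G]}=\pi_{\C^2}^{*}(\sO_{\C^2}\otimes\rdet)$ and shifting by $[2]$. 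Relative Grothendieck duality for the finite flat ($G$-equivariant) family $\sZ\to Y$ then upgrades this comparison to the identity
\[
\psi(\sO_0\otimes\rho)\simeq \dR\HHom_Y\bigl(\widetilde{A_{\rho\otimes\rdet}},\,\sO_Y\bigr)[2],
\]
in which the shift $\rho\mapsto\rho\otimes\rdet$ records the $\rdet$-twist coming from $\omega_{[\C^2/G]}$ together with the dualization, while the absence of an $\omega_Y$-twist reflects that the \emph{left} adjoint uses $\pi_{\C^2}^{*}\omega_{[\C^2/G]}$ rather than $\pi_Y^{*}\omega_Y$.

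With this identity established, I would substitute Proposition \ref{prop:Ishii5.1}: according as $\rho\otimes\rdet=\rho_0$, $\rho\otimes\rdet=\rho_i$, or neither, the right-hand side becomes $\dR\HHom_Y(\sO_E,\sO_Y)[2]$, $\dR\HHom_Y(\sO_{E_i}(-1)[1],\sO_Y)[2]$, or $0$. It then remains to evaluate these derived duals on the smooth surface $Y$ via the curve duality
\[
\dR\HHom_Y\bigl(i_{C *}\sL,\,\sO_Y\bigr)\simeq i_{C *}\bigl(\sL^{\vee}\otimes\sO_C(C)\bigr)[-1],
\]
valid for any curve $i_C\colon C\hookrightarrow Y$ and line bundle $\sL$ on $C$ (here $\sO_C(C)=\omega_C\otimes i_C^{*}\omega_Y^{-1}$ is the determinant of the normal bundle). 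For $C=E$ the fundamental cycle this gives $\sO_E(E)[1]$, and for $C=E_i\simeq\P^1$ with $E_i^2=-\alpha_i$, so that $\sO_{E_i}(E_i)=\sO_{E_i}(-\alpha_i)$, it gives $\sO_{E_i}(1-\alpha_i)$. This matches \eqref{eq:formula_calcoloPsi} in all three cases.

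The main obstacle is the middle step: making the comparison between $\psi$ and $\Theta$ precise, i.e.\ proving that $\psi$ really is the $\sO_Y$-dual of $\Theta$ up to the explicit $\rdet$-twist and the shift $[2]$. This requires controlling the derived dual kernel $\sO_{\sZ}^{\vee}$ and carrying out the relative duality over $Y$ $G$-equivariantly; it rests on $\sZ$ being Cohen--Macaulay of codimension $2$ in $Y\times\C^2$ and finite flat over $Y$, and on tracking the representation-theoretic twist $\rho\mapsto\rho\otimes\rdet$ correctly. Once this is pinned down, the remaining steps are the formal substitution of Proposition \ref{prop:Ishii5.1} and the routine duality identities for curves recalled above.
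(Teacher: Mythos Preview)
Your proposal is correct and follows essentially the same route as the paper: both arguments establish the key identity $\psi(\sO_0\otimes\rho)\simeq\dR\HHom_Y(\widetilde{A_{\rho\otimes\rdet}},\sO_Y)[2]$ via Grothendieck duality, then invoke Proposition~\ref{prop:Ishii5.1} together with the standard curve-duality formula on the smooth surface $Y$. The only difference is packaging: the paper applies Grothendieck--Verdier duality directly to the projection $\pi_Y\colon Y\times\C^2\to Y$, working with the tautological bundle $\sR=\dR\pi_{Y*}\sO_\sZ$ and its decomposition $\sR=\bigoplus_\sigma\sR_\sigma\otimes\sigma$ (so that $(\sR^\vee\otimes\rho)^G=\sR_\rho^\vee=\widetilde{M_\rho}^\vee$), rather than introducing your auxiliary functor $\Theta$ and invoking relative duality for $\sZ\to Y$; this makes your ``main obstacle'' a one-line computation. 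One small indexing slip to watch: with the paper's conventions $\Theta(\sO_{\C^2}\otimes\mu)=(\sR\otimes\mu)^G=\sR_{\mu^*}$, so $\Theta$ carries the Koszul resolution of $\sO_0\otimes\nu$ to $\widetilde{A_{\nu^*}}$ rather than $\widetilde{A_\nu}$, but this does not affect the final identity you state.
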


\begin{proof}
By definition of $\psi$ we have:
\begin{equation}
 \label{eq:psi0} \begin{split}
\psi(\mathcal{O}_0\otimes \rho ) &= \left[\dR\pi_{Y \ast} \left( \pi_{\C^2}^\ast(\mathcal{O}_0 ) \otimes^{\dL} \sO_\mathcal Z^\vee \otimes \rho \otimes \rdet [2] \right) \right]^G \\
&= \left[\dR\pi_{Y \ast}  \left(\mathcal{O}_{Y \times \{0\} } \otimes^{\dL} \sO_\mathcal Z^\vee \otimes  \rho \otimes \rdet [2] \right) \right]^G\\
&= \left[\dR\pi_{Y \ast}  \bigl(\dR \HHom \left(\sO_\mathcal Z, \mathcal{O}_{Y \times \{0\} }  \otimes  \rho \otimes \rdet [2] \right) \bigr) \right]^G
\end{split}
\end{equation}

Consider the resolution obtained from pulling back \eqref{eq:local_Koszul} to $Y\times \C^2$:
\[ 0 \to  \mathcal{O}_{Y \times \C^2} \otimes \rdet  \xrightarrow{(y,-x)^T}  \mathcal{O}_{Y \times \C^2} \otimes \rnat \xrightarrow{(x,y)} \mathcal{O}_{Y \times \C^2} \to \mathcal{O}_{Y \times \{0\} } \to 0.
\]
By Grothendieck--Verdier duality 
\begin{equation}\label{eq:GV}   \dR\pi_{Y \ast}  \dR \HHom \left(\sO_\sZ, \mathcal{O}_{Y \times \C^2 } \otimes  \rdet [2]\right)= \dR \HHom (  \dR\pi_{Y \ast}  \sO_\sZ, \sO_Y)
\end{equation}
Denote by $\sR$ the vector bundle $\sR=\dR\pi_{Y \ast}  \sO_\sZ$. 
Then the right hand side of the equation above is $\sR^\vee=\dR \HHom (  \sR, \sO_Y)$. Combining \eqref{eq:psi0} and \eqref{eq:GV} we find that 
\begin{equation} \label{eq:psi0prime}
\psi(\sO_0\otimes \rho)= \left[ \sR^\vee \otimes (\rdet \otimes \rho) \xrightarrow{}  \sR^\vee  \otimes (\rnat\otimes\rho) \xrightarrow{}   \sR^\vee \otimes \rho  \right]^G.
\end{equation}

The vector bundle  $\sR$ decomposes as a sum 
\begin{equation}\label{eq:DecompositionOfR}
\sR = \oplus_{\rho} \sR_\rho \otimes \rho
\end{equation} according to the decomposition of the regular representation. Moreover, for all $\rho\in \mathrm{Irrep}(G)$, we have $(\sR^\vee \otimes \rho)^G= (\sR_\rho^\vee\otimes \rho^*\otimes \rho) = \sR_\rho^\vee$. Then by exactness of the functor $(-)^G$:
\begin{equation}
\label{eq:our_sequences}
\psi(\sO_0\otimes \rho )= \left[\sR_{\rdet\otimes \rho}^\vee \xrightarrow{}  \sR_{\rnat\otimes\rho}^\vee \xrightarrow{}   \sR_{\rho}^\vee \right].
\end{equation}

By \cite[Corollary 3.2]{Ish02},  $\sR_\rho$ is the torsion free pull-back
$ \sR_\rho = \widetilde{M_\rho}$. Therefore, $\psi(\sO_0\otimes \rho)$ is dual to the torsion-free pull-back of $\widetilde{A}_{\rho\otimes \rdet}$:
\[\dR\HHom(\widetilde{A}_{{\rho\otimes \rdet}},\sO_Y)[2] = \psi(\sO_0\otimes \rho),\]
since all summands of $\sR$ are locally free. The statement then follows from Proposition \ref{prop:Ishii5.1}.
\end{proof}


\begin{remark} \label{rem:perv}
Let $\Coh_G(\C^2)_0$ be the finite length category of $G$-equivariant sheaves supported at $0\in\C^2$, and let $\!^0\mathrm{Per}(Y/X)$ be the category of $0$-perverse sheaves on $Y$ (as defined in \cite{Bri02_flops}). 

The complexes $\tilde{A}_\rho$, for $\rho$ special, are quasi-isomorphic to simple $(-1)$-perverse sheaves. Dualizing them (with the functor $\dR\HHom(-,K_Y)[2]$), we obtain simple $0$-perverse sheaves (\cite[Prop. 3.5.8]{VdB04}). 
Theorem \ref{thm:psiO0rho_general} shows that $\psi$ sends simple sheaves supported at $0$ to simple $0$-perverse sheaves twisted by $-K_Y$. 
It follows that $\psi\otimes K_Y$ restricts to an exact functor of abelian categories $\Coh_G(\C^2)_0\to \!^0\mathrm{Per}(Y/X)$. 
\end{remark}


\subsection{Alternative proof of Prop.~\ref{prop:1.1} (or \texorpdfstring{\cite[Prop. 1.1]{Ish02}}{[Ish02, Prop. 1.1]})}
\label{ssec:alternative_pf}
As an application of Theorem \ref{thm:psiO0rho_general}, we give an alternative proof of \cite[Prop. 1.1]{IU15}.
\smallskip

Let $\theta$ be the right adjoint to $\phi$:
\[ \begin{split} \theta (-)&:= [\dR\pi_{Y \ast} \left( \pi_{\CC^2 }^\ast(-) \otimes^{\dL} \sO_\sZ^\vee \otimes \pi_{Y} ^\ast(K_Y)[2] \right) ]^G\\
& \ = [\dR\pi_{Y \ast} \left( \pi_{\CC^2 }^\ast(-) \otimes^{\dL} \sO_\sZ^\vee  \right) ]^G \otimes K_Y[2]. 
 \end{split}\]
Note that \begin{equation}\label{eq:relBetweenAdjoints}
\theta(- \otimes K_{\CC^2})=\psi(-) \otimes K_Y.
\end{equation}

\begin{proof}[Proof of Prop. {\ref{prop:1.1}}]

Consider the semiorthogonal decomposition \eqref{eq:LocalSOD}.
Let $\sC=
\ker(\theta)$, i.e. the full triangulated subcategory of $D^b_G(\C^2)$ containing objects $C$ such that $\theta(C)=0$.
 By the adjunction  between $\phi$ and $\theta$ one has $\sC=\phi (D(Y))\!^\perp$, thus $\!^\perp\sC \simeq \phi (D(Y))$.

\paragraph{\textbf{Step 1:} } we show that the category $\sC$ is generated by the simple sheaves $\sO_0 \otimes \rho$, with $\rho$ not special.

Suppose $E$ is a complex of $G$-equivariant sheaves on $\C^2$ which is not supported on the origin. Then by \cite[Lemma 5.3]{BM02} there exists a point in $X\setminus \{0\}$, equivalently a free orbit $Gx$ in $\C^2$, and an integer $i$ such that 
\[ \Ext^i_{\C^2}(\sO_{Gx},E)^G\neq 0. \]
The functor $\phi$ maps points of $Y\setminus F$ to free orbits, so we may write $\sO_{Gx}\simeq \phi(\sO_p)$ for some $p\in Y\setminus F$. Then, 
\[ \Ext^i(\phi(\sO_p),E)^G= \Ext^i_{[\C^2/G]}(\phi(\sO_p),E) = \Ext^i_Y(\sO_p, \theta(E))\neq 0, \]
thus $E\notin \sC$. Therefore, all objects of $\sC$ are supported at the origin.

Theorem \eqref{thm:psiO0rho_general}, combined  with  \eqref{eq:relBetweenAdjoints}, implies that $\sO_0\otimes \rho\in \sC $ if and only if $\rho$ is not special.
Then sheaves in $\sC$ have a composition series with factors the non-special simples. Since $\theta$ is exact on $\Coh_G(\C^2)_0$ by Remark \ref{rem:perv} and \eqref{eq:relBetweenAdjoints}, a complex $E \in D_G(\C^2)$ lies in $\sC$ precisely if its cohomologies do, which proves the claim. 

\paragraph{\textbf{Step 2:}} we show that the sheaf $\sO_{\C^2}\otimes \rho$ is in the essential image of $\Phi$ for $\rho$ special. 

One readily checks that, for two irreducible representations $\sigma$ and $\tau$,
\begin{equation*}
\dR\Hom(\sO_{\C^2}\otimes \sigma, \sO_0 \otimes \tau)=0
\end{equation*}
unless $\sigma=\tau$ (see Remark \ref{rem:quiver}). Then by Step 1 we have that $\sO_{\C^2}\otimes \rho \in ~^\perp\sC = \im \Phi$ for $\rho$ special.

\paragraph{\textbf{Step 3:}} we show that the sheaves 
  $\sO_{\C^2}\otimes \rho$ for $\rho$ special generate the essential image of $\Phi$. 
  
 We have
\begin{equation}\label{eq:psi(Orho)}
\begin{split}
\psi(\sO_{\C^2}\otimes \rho) & = \left[ \dR\pi_{Y \ast}  \dR \HHom \left(O_\sZ, \mathcal{O}_{Y \times \C^2 } \otimes \rho\otimes  \rdet [2]\right) \right]^G\\
 &= \left[ \dR \HHom (  \dR\pi_{Y \ast}  \sO_\sZ, \sO_Y) \otimes\rho\right]^G \\
 &= \left[ \sR^\vee \otimes\rho\right]^G =\sR_\rho^\vee 
 \end{split}
\end{equation}
by \eqref{eq:GV} and \eqref{eq:DecompositionOfR}. Then by Step 2,  Equation \eqref{eq:psi(Orho)}, and the fact that 
\[
\phi \circ \psi_{|{\phi (D(\Coh(Y)))}}=\mathrm{id}_{\phi(D(\Coh(Y)))}
\] 
we get that $\Phi(\sR_\rho^\vee)=\sO_{\C^2}\otimes \rho$ for all $\rho$ special. Since the $\sR_\rho^\vee$ are generators for $D^b(Y)$ by \cite[Theorem B]{VdB04},  the special $\{\sO_{\C^2}\otimes \rho\}$ generate $\Phi(D^b(\Coh(Y)))$. \end{proof}


\subsection{Cyclic singularities case}
\label{ssec:CyclicSingAdjointComputation}
Suppose we are in the cyclic setting of Section \ref{sssec:cyclic_singularities}. Then we can give a more precise statement of Theorem \ref{thm:psiO0rho_general}:
\begin{proposition} \label{thm:PsiO0rhoi} 
We have:
\begin{align}
\psi(\sO_0\otimes \rho_i) \simeq \begin{cases}
\sO_E(E)[1] & \mbox{ if } a+1+i=0 \\
\sO_{E_{t}}(-\alpha_t+1) & \mbox{ if } a+1+i=i_t \in I(r,a)\setminus\{0,r\} \\
0 & \mbox{ otherwise}.
\end{cases}
\end{align}
\end{proposition}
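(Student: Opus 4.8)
The plan is to obtain Proposition~\ref{thm:PsiO0rhoi} as a direct specialization of Theorem~\ref{thm:psiO0rho_general}, feeding in the explicit cyclic dictionary of Section~\ref{sssec:cyclic_singularities}; no new dualization is needed, since the Grothendieck--Verdier computation is already packaged in the general statement. The only two ingredients are the shape of $\rdet$ in the cyclic case and the identification of the special representations with the entries of the $I$-series.

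First I would record that for $G=\frac1r(1,a)$ one has $\rnat=\rho_1\oplus\rho_a$, hence $\rdet=\Lambda^2\rnat=\rho_{a+1}$ and $\rdet^\ast=\rho_{-(a+1)}$. Thus for an input $\rho_i$ the twist governing Theorem~\ref{thm:psiO0rho_general} is $\rho_i\otimes\rdet=\rho_{i+a+1}$, all indices taken modulo $r$. This turns the two nontrivial hypotheses of the theorem into the congruences $a+1+i\equiv 0$ (the case $\rho=\rdet^\ast$) and $a+1+i\equiv i_t$ (the case $\rho\otimes\rdet$ special), which are exactly the conditions appearing in the proposition.

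Next I would line up the labels. By Wunram's description recalled in Section~\ref{sssec:cyclic_singularities}, the special representations are precisely the $\rho_l$ with $l\in I(r,a)$, and in Proposition~\ref{prop:Ishii5.1-cyclic} the curve $E_t$ is matched with $\rho_{i_t}$ for $i_t\in I(r,a)\setminus\{0\}$. Hence the middle case of Theorem~\ref{thm:psiO0rho_general}, namely $\rho_i\otimes\rdet=\rho_{i_t}$ with $1\le t\le m$, reads $a+1+i=i_t\in I(r,a)\setminus\{0,r\}$: removing both $0$ and $r\equiv 0$ discards the repeated endpoints $i_{n+1}=0$ and $i_0=r$ of the $I$-series, leaving exactly the curve indices $i_1,\dots,i_m$, and separating this case from the first one (where the twist lands on $\rho_0$). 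The output then transcribes as $\sO_{E_t}(-\alpha_t+1)$ with $-\alpha_t=E_t^2$, these being the entries of the Hirzebruch--Jung graph $[\alpha_1,\dots,\alpha_n]$; the remaining representations give $0$. Combining the three cases yields the statement.

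I do not expect a serious obstacle, precisely because Theorem~\ref{thm:psiO0rho_general} already performs the homological work. The one point that needs care is the modular bookkeeping of indices together with the correct pairing of the $I$-series entries $i_t$ with the curves $E_t$ and their self-intersections $-\alpha_t$. Should one instead want the self-contained toric derivation foreshadowed by the title of this subsection, the effort would move to computing $\psi$ directly from the star-subdivision resolution of $\frac1r(1,a)$ and its invariant divisors; this is more laborious but bypasses Proposition~\ref{prop:Ishii5.1}.
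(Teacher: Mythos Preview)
Your proposal is correct and matches the paper's own proof, which is equally terse: it simply observes that $\rdet=\rho_{a+1}$ in the cyclic case and invokes Proposition~\ref{prop:Ishii5.1-cyclic}, which is precisely the specialization you carry out. The only cosmetic difference is that the paper phrases it as rerunning the argument of Theorem~\ref{thm:psiO0rho_general} with the cyclic input rather than citing the theorem's statement directly, but the content is identical.
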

\begin{proof} The proof goes as the proof of Theorem \ref{thm:psiO0rho_general}.  Recall that in this case $\rho_{\det}=\rho_{a+1}$, and that  Proposition \ref{prop:Ishii5.1-cyclic} computes the torsion-free pullbacks $\widetilde{A_l}$. 
\end{proof}



\begin{example}
Suppose $G=\frac{1}{5}(1,3)$. We have $\frac 53=[2,3]$ and $I(5,3)=\{ 5,3,1,0 \}$. The SOD \eqref{eq:sod} from \cite{IU15} reads
\[ D^b(\sX)=\left\langle G, \sO_0\otimes \rho_4 ,\phi(D^b(\Coh(Y)))  \right\rangle \]
where $G=\C[x,y]/(x,y^2)\otimes \rho_4$ fits in an extension $\sO_{0}\otimes \rho_2 \xrightarrow{y} G \to \sO_0\otimes \rho_4$.
By Proposition \ref{thm:PsiO0rhoi} there is a short exact sequence
\[ \sO_{E_2}(-2) \to \psi(G) \to \sO_{E_1}(-1). \]
It is straightforward to check that $\psi(G)$ can also be written as an extension of $\sO_p$ by $\sO_E(E)$, where $p\in E_2$. 
\end{example}
%


\begin{example}
\label{ex:1/n11ImagesOfPsi}
Let $G=\frac 1n (1,1)$, $n>1$ integer.  Let $E$ the single exceptional $(-n)$-curve in the minimal resolution of $X$ (see Example \ref{exa:1n11}). Then Theorem \ref{thm:PsiO0rhoi} reads
\begin{equation}
\psi(\sO_0\otimes \rho_i) \simeq \begin{cases}
0 & \mbox{ if } i=0,...,n-3\\
\sO_E(-n)[1] & \mbox{ if } i=n-2 \\
\sO_{E}(-n+1) & \mbox{ if } i=n-1.
\end{cases}
\end{equation} 
\end{example}

\begin{example}
\label{ex:1/(2k+1)1kImagesOfPsi}
Let $G=\frac{1}{2k+1}(1,k)$, with $k \geq 1$. Let $E_1, \dots, E_k$ be the $k$ exceptional curves in $Y$, where $E_1^2=-3$ and $E_i^2=-2$ for $i=2,...,k$ (see Example \ref{exa:12k+1}). The fundamental cycle is $E=\sum_{i=1}^k E_i$. 
Theorem \ref{thm:PsiO0rhoi} reads
\begin{equation}
\label{eq:thmPsiO0rhoi_2k+1}
\psi(\sO_0\otimes \rho_i) \simeq \begin{cases}
0 & \mbox{ if } i=0,...,k-1 \\
\sO_E(E)[1] & \mbox{ if } i=k \\
\sO_{E_{2k+1-i}}(-\alpha_{2k+1-i}+1) & \mbox{ if } i=k+1,...,2n.
\end{cases}
\end{equation} 
\end{example}
\smallskip


\subsubsection{Alternative (toric) approach to Proposition \ref{thm:PsiO0rhoi} }
\label{ssec:ToricApproach}

Here we present another possible approach, based on toric geometry, to the computation in Proposition \ref{thm:PsiO0rhoi}. \smallskip

First, we briefly recall how to describe the singularity $X=\frac 1r(1,a)$ and its minimal resolution 
torically. The singularity $X$ is the affine toric surface $U_\sigma$ associated to the cone 
 \begin{equation}   \label{eq:normal-form} \sigma=\langle  re_1 -a e_2, e_2  \rangle \subset N_{\RR}\simeq \RR^2 
\end{equation}
Its minimal resolution can be obtained by performing $n$ consecutive weighted blow-ups with weights $\frac{1}{{i}_{t-1}}(1, i_{t})$, where $t=1, \dots, n$. More explicitly, write $\rho_0=e_2$ and $\rho_{n+1}= re_1 -a e_2$. Then $Y$ is the smooth affine toric surface with fan $\Sigma$, where $\Sigma$ is the refinement of $\sigma$ obtained by adding the rays generated by the vectors \begin{equation} \rho_{t}=\frac{1}{{i}_{t-1}}\left(\rho_{n+1} +i_{t} \cdot \rho_{t-1}\right) \quad  t=1, \dots, n
\end{equation}
 Indeed, if ${\sigma}_{t-1}=\langle \rho_{n+1}, \rho_{t-1} \rangle$, the associated surface is the cyclic singularity $U_{{\sigma}_{t-1}}=\frac{1}{{i}_{t-1}}(1, i_{t})$.  
The surface associated to the refinement of ${\sigma}_{t-1}$ obtained by adding the ray $\rho_t$ has two charts: the chart corresponding to $\tau_t=\langle  \rho_{t-1}, \rho_{t} \rangle$ is $U_{\tau_t}=\C^2$, and the chart corresponding to ${\sigma}_{t}=\langle \rho_{n+1}, \rho_{t} \rangle$ is $U_{{\sigma}_{t}}=\frac{1}{{i}_{t}}(1, i_{t+1})$ if $t<n$, while is $\CC^2$  if $t=n$. 
Note that the vectors $\rho_t$ satisfy the relation
 $\rho_{t-1}+\rho_{t+1}=\alpha_t \cdot \rho_t $,
 which implies that the self-intersection of the exceptional curve $E_t$ associated to the ray $\rho_t$ is $-\alpha_t$ (see \cite[Theorem 10.4.4]{cox2011toric}).

The map $f \colon Y \to X$ can be described locally as follows. Let $u_t, v_t$ be the coordinates on $U_{\tau_t}$, and, for $t<n$, let $x_t, y_t$ be the coordinates on $\C^2$ such that  $U_{{\sigma}_{t}}=\Spec\CC[x_t,y_t]^G$, $G= \frac{1}{{i}_{t}}(1, i_{t+1})$. Then:
\begin{equation} \label{eq:blow-up maps}
\begin{cases}
x_{t-1}^{i_{t-1}}=u_t \\
y_{t-1}^{i_{t-1}}=u_t^{i_t}v_t^{i_{t-1}} \end{cases} \quad \text{and} \quad   \begin{cases}
x_{t-1}^{i_{t-1}} = x_t^{i_{t-1}} y_t \\
y_{t-1}^{i_{t-1}}= y_t^{i_t}
\end{cases} 
\end{equation}
For $t=n$, the same expressions hold,  $x_n, y_n$ being the coordinates on $U_{{\sigma}_{n}}=\C^2$.

\begin{remark}
Any 2-dimensional strictly convex cone can be written in the form \eqref{eq:normal-form}, called \emph{normal form}. 
There is an ambiguity for normal forms of 2-dimensional cones: if \mbox{$\sigma=\langle  re_1 -a e_2, e_2\rangle$} and $\sigma^\prime=\langle  r^\prime e_1^\prime -a^\prime e_2^\prime, e_2^\prime\rangle$ are lattice equivalent cones,
 then  $r=r^\prime$ and either $a=a^\prime$ or $aa^\prime =1$ mod $r$. In terms of the associated singularity, this corresponds to writing $X$ as either $\frac 1r(1,a)$ or $\frac 1r(a^\prime, 1)$.
Note that, if  $r/a=[\alpha_1,...,\alpha_n]$, then $r/a^\prime=[\alpha_n,...,\alpha_1]$, that is, 
  $E_1, \dots, E_n$ appear in reverse order when resolving $\frac{1}{r}(a^\prime,1)$ as indicated above. Define the $j$-series $J(r,a)$ to be the sequence $J(r,a) \coloneqq \{j_0, j_1, \dots, j_n,j_{n+1}\}$,
 where  $j_0=0, j_1=1$, and $j_t=j_{t-1}\alpha_{t-1}-j_{t-2}$. Then, $I(r, a^\prime)=\{j_{n+1}, j_n, \dots, j_1, j_0 \}$. \end{remark}\smallskip

The Picard group of the minimal resolution $Y$ is generated by divisors $D_1,...,D_n$ determined by the condition $D_i \cdot E_j = \delta_{ij}$.
For $t=1, \dots, n$, formula \eqref{eq:formula-cyclic} implies $\widetilde{M}_{\rho_{i_t}}=\mathcal{O}_Y(D_t)$.
Now let $l \in \{0, \dots, r-1\}$. By \cite[\S A.1, Theorem a)]{Wun88} the summand $\sR_{\rho_l}= \widetilde{M}_{\rho_{l}}$ appearing in \eqref{eq:DecompositionOfR} is the line bundle  $\sO_Y(\sum_i d^{(l)}_i D_i)$, where the $d^{(l)}_i\geq 0$ are uniquely determined by the I-series $I(r,a)$ by requiring 
\[
l=d_1 i_1+ \dots + d_n i_n  \quad \text{and} \quad 0 \leq \sum_{t>t_0} d_t i_t < i_{t_0} \  \text{for any $t_0$}.\]
We can then interpret the sequences \eqref{eq:our_sequences} by studying the $D_i$'s and the divisors associated to the invariant coordinates $x^r$ and $y^r$ on $X$, without recurring to Proposition \ref{prop:Ishii5.1}.

Consider the divisors on $X$ defined by  $x^r$ and $y^r$. By \eqref{eq:blow-up maps}, their strict transforms under \mbox{$f \colon Y \to X$} are nonreduced curves $A'=r A$, $B'=rB$; by construction, the curve $A$ (resp. $B$) meets the curve $E_n$ (resp. $E_1$) at exactly one point (see top of Figure \ref{fig:AandB}).
In what follows, we write the pull-backs $f^*(x^r)$, $f^*(y^r)$ and the $D_i$'s as linear combinations of $A,B$, and $E_1,...,E_n$.

For $s \in \{0,1,...,n,n+1\}$ define a sequence $H(s)\coloneqq\{ h^{(s)}_0,...,h^{(s)}_{n+1} \}$ by letting $h^{(s)}_s=0$, $h^{(s)}_{s-1}=h^{(s)}_{s+1}=1$ and 
\begin{equation*}
\begin{split}
h^{(s)}_t=h^{(s)}_{t+1}\alpha_{t+1} - h^{(s)}_{t+2} & \ \  \mbox{ for }0\leq t<s-1,\\
h^{(s)}_t=h^{(s)}_{t-1}\alpha_{t-1} - h^{(s)}_{t-2} & \ \  \mbox{ for }s+1<t \leq n+1.
\end{split}
\end{equation*} 
Observe that $H(0)=J(r,a)$ and $H(n+1)=I(r,a)$. 
More generally,  
the truncation $H{(s)}_{\geq  s}$ is the $j$-series $J(i_{s}, i_{s+1})$
and the truncation $H{(s)}_{\leq s}$ is the $i$-series $I(j_s, j_{s-1}^\prime)$, with $j_{s-1} j_{s-1}^\prime =1 \mod j_s$. 

By \eqref{eq:blow-up maps}, the terms of $H(0)$ and $H(n+1)$ are the coefficients of the pull-back of the Cartier divisors $(x^r=0)$ and $(y^r=0)$ to $Y$: 
\begin{equation*}
\begin{split}
(f^*(x^r)=0)= \sum_{t=1}^{n}h^{(0)}_tE_t + h^{(0)}_{n+1}A \qquad \
(f^*(y^r)=0)=h^{(n+1)}_0 B + \sum_{t=1}^n h^{(n+1)}_t E_t.
\end{split}
\end{equation*}
Moreover, one checks that there are numerical equivalences \footnote{In this setting, linear and numerical equivalence coincide. In fact, since we are in a toric setting, they coincide up to torsion of the divisor class group, which is zero because $Y$ is smooth. Again given the toric setting, it is not surprising that working up to equivalence does not suffice to compute the cohomologies of the sequences \eqref{eq:our_sequences}, which is why the expressions \eqref{eq:NumEquivClassesDi} are relevant to our computation.} 
\begin{equation}
\label{eq:NumEquivClassesDi}
D_s \equiv \sum_{t=s+1}^nh^{(s)}_t E_t +h^{(s)}_{n+1}A\equiv h^{(s)}_0 B + \sum_{t=1}^{s-1}h^{(s)}_tE_t  \qquad \qquad s=1,...,n. 
\end{equation}
An alternative way to obtain the expressions \eqref{eq:NumEquivClassesDi} is to look at the fibers of maps $Y\to \pr 1$ constructed as follows. For each $s$, contract all the exceptional curves  on $Y$ except for $E_s$,
to obtain a morphism $\tau_s\colon Y\to T_s$, where $T_s$ is the total space of an orbibundle over the projective line 
$\P(j_{s-1}, i_{s+1}) \simeq \pr 1$ (see Figure \ref{fig:AandB}). 
Project further  $\sigma_s \colon T_s \to \pr 1$.  
Then, by \eqref{eq:blow-up maps}, the two expressions for $D_s$ written in \eqref{eq:NumEquivClassesDi} are the fibers of $\sigma_s \circ \tau_s \colon Y \to \P^1$ over the points $(0:1)$ and $(1:0)$ in $\P^1$.

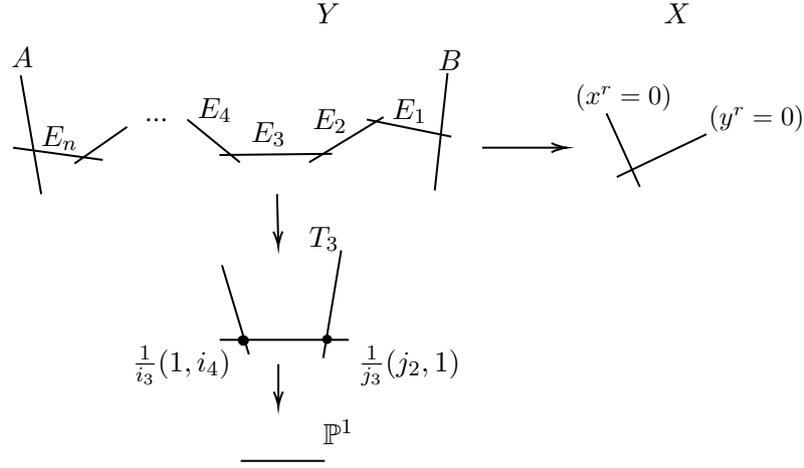
\begin{figure}[ht!]

\tikzset{every picture/.style={line width=0.75pt}} 

\begin{tikzpicture}[x=0.75pt,y=0.75pt,yscale=-0.7,xscale=0.7]
\clip(20,80) rectangle (700, 430);

\draw    (514.62,214.05) -- (490.85,161.32) ;
\draw    (562.15,176.26) -- (498.17,207.02) ;
\draw    (377.51,132.31) -- (368.37,216.69) ;
\draw    (73.14,134.07) -- (87.76,219.33) ;
\draw    (278.8,196.48) -- (331.81,163.95) ;
\draw    (214.82,191.2) -- (295.25,190.32) ;
\draw    (319.93,165.71) -- (380.26,178.02) ;
\draw    (191.96,165.71) -- (229.44,196.48) ;
\draw    (67.66,185.93) -- (131.64,194.72) ;
\draw    (111.53,197.35) -- (149.01,170.99) ;
\draw    (402.19,185.93) -- (460.52,185.93) ;
\draw [shift={(462.52,185.93)}, rotate = 180] [color={rgb, 255:red, 0; green, 0; blue, 0 }  ][line width=0.75]    (10.93,-3.29) .. controls (6.95,-1.4) and (3.31,-0.3) .. (0,0) .. controls (3.31,0.3) and (6.95,1.4) .. (10.93,3.29)   ;
\draw    (255.95,219.33) -- (256.81,255.12) ;
\draw [shift={(256.86,257.12)}, rotate = 268.61] [color={rgb, 255:red, 0; green, 0; blue, 0 }  ][line width=0.75]    (10.93,-3.29) .. controls (6.95,-1.4) and (3.31,-0.3) .. (0,0) .. controls (3.31,0.3) and (6.95,1.4) .. (10.93,3.29)   ;
\draw    (215,324) -- (307,324) ;
\draw    (216.64,270.31) -- (228.03,308.36) -- (235.84,334.47) ;
\draw    (301.65,259.76) -- (288.85,337.11) ;
\draw    (256.86,341.5) -- (256.86,370.27) ;
\draw [shift={(256.86,372.27)}, rotate = 270] [color={rgb, 255:red, 0; green, 0; blue, 0 }  ][line width=0.75]    (10.93,-3.29) .. controls (6.95,-1.4) and (3.31,-0.3) .. (0,0) .. controls (3.31,0.3) and (6.95,1.4) .. (10.93,3.29)   ;
\draw  [fill={rgb, 255:red, 0; green, 0; blue, 0 }  ,fill opacity=1 ] (228.73,323.49) .. controls (228.73,322.79) and (230.17,321.42) .. (231.93,321.42) .. controls (233.7,321.42) and (235.13,322.79) .. (235.13,324.49) .. controls (235.13,326.19) and (233.7,327.57) .. (231.93,327.57) .. controls (230.17,327.57) and (228.73,326.19) .. (228.73,324.49) -- cycle ;
\draw  [fill={rgb, 255:red, 0; green, 0; blue, 0 }  ,fill opacity=1 ] (288.85,323.75) .. controls (288.82,322.29) and (290.01,321.08) .. (291.53,321.05) .. controls (293.04,321.01) and (294.3,322.16) .. (294.34,323.62) .. controls (294.37,325.07) and (293.18,326.28) .. (291.66,326.32) .. controls (290.15,326.35) and (288.89,325.2) .. (288.85,323.75) -- cycle ;
\draw    (230,411) -- (290,411) ;

\draw (160,162.93) node [anchor=north west][inner sep=0.75pt]   [align=left] {...};
\draw (466.33,138.1) node [anchor=north west][inner sep=0.75pt]   [align=left, font=\small] {$(x^r=0)$};
\draw (561.76,153.26) node [anchor=north west][inner sep=0.75pt]   [align=left, font=\small ] {$(y^r=0)$};
\draw (529.64,80.12) node [anchor=north west][inner sep=0.75pt]   [align=left] {$X$};
\draw (281.94,80.12) node [anchor=north west][inner sep=0.75pt]   [align=left] {$Y$};
\draw (368.77,113.7) node [anchor=north west][inner sep=0.75pt]   [align=left] {$B$};
\draw (64.4,111.95) node [anchor=north west][inner sep=0.75pt]   [align=left] {$A$};
\draw (85.6,168.2) node [anchor=north west][inner sep=0.75pt]   [align=left] {$E_n$};
\draw (336.05,148.86) node [anchor=north west][inner sep=0.75pt]   [align=left] {$E_1$};
\draw (279.38,155.8) node [anchor=north west][inner sep=0.75pt]   [align=left] {$E_2$};
\draw (235.51,165.32) node [anchor=north west][inner sep=0.75pt]   [align=left] {$E_3$};
\draw (197.12,147.98) node [anchor=north west][inner sep=0.75pt]   [align=left] {$E_4$};
\draw (276.72,240.58) node [anchor=north west][inner sep=0.75pt]   [align=left] {$T_3$};
\draw (286.78,379.12) node [anchor=north west][inner sep=0.75pt]   [align=left] {$\P^1$};
\draw (150.73,326.41) node [anchor=north west][inner sep=0.75pt]   [align=left] {$\frac{1}{i_3}(1, i_4)$};
\draw (311.81,326.41) node [anchor=north west][inner sep=0.75pt]   [align=left] {$\frac{1}{j_3}(j_2,1)$};
\end{tikzpicture}
\caption{A picture of the surface $Y$ and the maps $ f \colon Y \to X$, $p_3 \circ \tau_3 \colon Y \to \P^1$. The toric surface $T_3$ has two singular points of type  $\frac{1}{i_3}(1, i_4)$ and $\frac{1}{j_3}(j_2,1)$.}
\label{fig:AandB}
\end{figure}

This setup can be used to interpret the maps in the complexes \eqref{eq:our_sequences}. Indeed, invariant functions pull-back to maps:
\[ \sO_Y \xrightarrow{f^*(x^r)} \sO_Y \ \ \qquad \ \ \sO_Y \xrightarrow{f^*(y^r)} \sO_Y \]
which must factor through rational maps
\begin{equation}
x_{(l)}\colon\sO_Y(\sum_i d^{(l)}_i D_i)\simeq\sR_l \xrightarrow{} \sR_{l+1}\simeq\sO_Y(\sum_i d^{(l+1)}_i D_i)
\end{equation} 
(and similarly for $y$) satisfying $(x_{(l)})\leq (f^*(x^r))$. This last condition uniquely determines a divisor in the linear series $|\sum_i d^{(l)}_i D_i|$, which allows one to compute the cohomoologies of \eqref{eq:our_sequences} without ambiguity: the maps $x_{(l)}$ and $y_{(l)}$ are precisely those appearing in \eqref{eq:our_sequences} (after taking duals). We illustrate this in an example.


\begin{example}
Suppose $G=\frac 15(1,2)$. The continued fraction expansion is $\frac 73  = [3,2]$ and the $i$-series $I(7,3)=\{ 5,2,1,0 \}$. Equation  \eqref{eq:thmPsiO0rhoi_2k+1} reads
\begin{equation}
\psi(\sO_0\otimes \rho_i) \simeq \begin{cases}
0 & \mbox{ if } i=0,1, \\
\sO_F(F)[1] & \mbox{ if } i=2 \\
\sO_{E_{5-i}(-\alpha_{5-i} + 1) } & \mbox{ if } i=3,4.
\end{cases}
\end{equation} 
We verify it using the approach of Section \ref{ssec:ToricApproach}. 
Applying \cite[\S A.1]{Wun88} we have 
\[ \sR_i=\begin{cases}
\sO_Y & i=0 \\
\sO_Y(D_{3-i}) & i=1,2 \\
\sO_Y(D_1+D_{5-i}) & i=3,4
\end{cases} \]

From Section \ref{ssec:ToricApproach} we get the following equalities:
\begin{align*}
(f^* x^5=0) = E_1 + 3E_2 +5A,  & &  D_1 \equiv  B \equiv  E_2 + 2A, \\
 (f^* y^5=0) = 5B + 2E_1 + E_2, & &  D_2 \equiv 3B + E_1 \equiv  A. 
\end{align*}

The maps $x_{(l)}$ are uniquely determined by the above. Indeed, denote by $a,b,e_{1},e_2$ the sections defining the divisors $A,B,E_1,E_2$. Then we have:
\begin{align*}
x_{(0)}\colon &\sR_0 \xrightarrow{a} \sR_1 & & y_{(0)} \colon \sR_0 \xrightarrow{b} \sR_2\\
x_{(1)}\colon &\sR_1 \xrightarrow{e_2a} \sR_2 & & y_{(2)} \colon \sR_2 \xrightarrow{b} \sR_4\\
x_{(2)}\colon &\sR_2 \xrightarrow{a} \sR_3 & & y_{(4)} \colon \sR_4 \xrightarrow{be_1} \sR_1\\
x_{(3)}\colon &\sR_3 \xrightarrow{e_2a} \sR_4 & & y_{(1)} \colon \sR_1 \xrightarrow{b} \sR_3\\
x_{(4)}\colon &\sR_4 \xrightarrow{e_1e_2a} \sR_0 &  & y_{(3)} \colon \sR_3 \xrightarrow{be_1e_2} \sR_0,\\
\end{align*}
so that $f^*x^5 = x_{(4)} x_{(3)} x_{(2)} x_{(1)} x_{(0)}$ (and similarly for $y$).
The dual maps $x_{(l)}^\vee$ (resp. $y_{(l)}^\vee$) to  $x_{(l)}$ (resp $y_{(l)}$) correspond to multiplication by the same sections. Then, consider for example the sequence:
\begin{align*}
\psi(\sO_0\otimes \rho_2)=\left[\sR_{0}^\vee \xrightarrow{(-y_{(3)}^\vee, x_{(4)}^\vee)^T}  \sR_{3}^\vee  \oplus \sR_{4}^\vee \xrightarrow{(x_{(2)}^\vee,y_{(2)}^\vee)}   \sR_2^\vee \right].
\end{align*}
The kernel of the map $(x_{(2)}^\vee,y_{(2)}^\vee)=(a,b)$ is 
\[\sO_Y(-2D_1-D_2) \xrightarrow{(-b,a)^T}  \sR_{3}^\vee  \oplus \sR_{4}^\vee.\]
We have  $(-y_{(3)}^\vee, x_{(4)}^\vee)^T=e_1e_2(-b,a)^T$, so
\[\psi(\sO_0\otimes \rho_2) = \coker\left( \sR_0 \xrightarrow{e_1e_2} \sO(-2D_1-D_2)  \right)[1] \simeq \sO_F(F)[1]. \] 

Similarly, consider:
\begin{align*}
\psi(\sO_0\otimes \rho_3)=\left[\sR_{1}^\vee \xrightarrow{(-y_{(4)}^\vee, x_{(0)}^\vee)^T}  \sR_{4}^\vee  \oplus \sR_{0}^\vee \xrightarrow{(x_{(3)}^\vee,y_{(3)}^\vee)}   \sR_3^\vee \right].
\end{align*}
with $(-y_{(4)}^\vee, x_{(0)}^\vee)^T=(-be_1,a)^T$ and $(x_{(3)}^\vee,y_{(3)}^\vee)=(e_2a,be_1e_2)$. This shows that the complex is exact in the middle. However, the cokernel of $(-be_1,a)^T$ is $(a,be_1)$, and hence
\[ \psi(\sO_0\otimes \rho_3) = \coker\left( \sO(-2D_1+D_2) \xrightarrow{e_2} \sR_3^\vee  \right) \simeq \sO_{E_2}(-1). \]
One proves the remaining cases in the same way.

\end{example}




\section{The minimal resolution of the Johnson-Koll\'ar surfaces} \label{sec:min-res}

For $k >0$ integer, let 
$X^{(k)}$ 
be a sufficiently general log del Pezzo surface 
of the family \eqref{eq:series}.
The surface $X^{(k)}$ is the vanishing locus of a generic section of the sheaf $\mathcal{O}_\P(8k+4)$ on $\P=\mathbb{P}(2,2k+1, 2k+1,4k+1)$. 
Let \mbox{$f^{(k)} \colon \widetilde{X}^{(k)} \to X^{(k)} $} be the minimal resolution. 
In this Section we describe in details the geometry of the surface $\widetilde{X}^{(k)} $. We will omit the superscript $- ^{(k)}$ from our notation whenever the context allows it.

\subsection{Basic properties} \label{ssec:basic}
Fix $k>0$ integer. We denote by $x, y_1, y_2, z$ the weighted homogeneous coordinates of $\P$.
The singular locus $\P_\mathrm{sing}$ of  $\mathbb{P}$ has three components:
\begin{itemize}
\item[1.] the point $(1:0:0:0)$, which is a singularity of type $1/2(1,1,1)$;
\item[2.] the line $(x=z=0)$, whose points all have a neighbourhood analytically isomorphic to $\mathbb{A}^1 \times 1/(2k+1)(1,k)$;
\item[3.] the point $(0:0:0:1)$, which is a singularity of type $1/(4k+1)(4,1,1)$.
\end{itemize}
\smallskip

It is easy to check that a section of the sheaf $\mathcal{O}_{\P}(8k+4)$ is of the form: 
\begin{equation}
\label{eq:generic_X8k+4}	
h^{(k)}(x,y_1,y_2,z)=a x^{4k+2}+h_4(y_1, y_2)+ b xz^2+x^{2k+1}h_2(y_1, y_2)+x^{k+1}h_1(y_1,y_2)z
\end{equation}
where $a,b \in \CC$ and $h_i(y_1, y_2)$ is a homogeneous polynomial of degree $i$ in the variables $y_1, y_2$. 

Let $X$ be the vanishing locus of $h=h^{(k)}$\footnote{
If $h$ is irreducible, then $X$ is wellformed. Indeed, if $h$ is irreducible, then $h_4(y_1,y_2) \neq 0$, that is, $X$ does not contain the line $(x=z=0)$. 
By \cite[Corollary 8.5]{IF}, $X$ is quasismooth if and only if $a \neq 0$, $b \neq 0$, and  for $i=1,2$ the polynomial $h_4(y_1, y_2)$ has a monomial of the form $y_i^m y_j$, $m>0$, $j \in \{1,2\}$ (these conditions imply that $h$ is irreducible).\label{footnote}}.
For a generic choice of $h$, $(1:0:0:0) \notin X$, and $(0:0:0:1) \in X$ is a singularity of type $1/(4k+1)(1,1)$. Moreover, since $h(0, y_1, y_2, 0)=h_4(y_1, y_2)$, 
$X$ intersects the line $(x=z=0)$ in four points, each of which is a singularity of type $1/(2k+1)(1,k)$. We write $p=(0:0:0:1)$ and denote by $p_1, p_2, p_3, p_4$ the four points in $X \cap (x=z=0)$.

By Equation \eqref{eq:generic_X8k+4} the curve $(x=0) \cap X$ is the union of four lines $c_{i} \in |\mathcal{O}_\P(2)| \oplus |\mathcal{O}_\P(2k+1)|$, $i=1,2,3,4$; for each $i$, the line $c_i$ passes through the points $p_i$ and  $p$ (see the bottom left of Figure \ref{fig:surfaces}). 

\begin{remark} \label{rem:min-names}
By the general theory of surface cyclic quotient singularities, 	the surface $\widetilde{X}$ is obtained from $X$ by a sequence of $1+4k$ weighted blow-ups: a single weighted blow-up at $p$, and a sequence of $k$ weighted blow-ups centered at each $p_i$ (see Examples \ref{exa:1n11} and \ref{exa:12k+1}). 

We denote by $F$ the $-(4k+1)$- exceptional curve over $p$, and for $i=1,2,3,4$  we denote by $C_{i,0}$ the $(-3)$-exceptional curve, and by $C_{i,j}$,  $j=1, \dots, k-1$, the $(-2)$-exceptional curves over $p_i$.   
Moreover, for $i=1,2,3,4$ we denote by $C_{i,k}$ the strict transform in $\widetilde{X}$ of the line $c_i \subset X$  (see the top of Figure \ref{fig:surfaces}).  
\end{remark} 

\begin{remark} \label{rem:canonical-min}
  The minimal resolution $f \colon \widetilde{X} \to X$ is noncrepant. 
 More precisely, the singularities of $X$ are strictly log-terminal: 
 \begin{equation} K_{\widetilde{X}}= f^\ast(K_X)+ d \cdot F+ \sum_{i=1}^4 \sum_{j=1}^k d_j \cdot C_{ij}
\end{equation}  where
\begin{equation}d=-\frac{4k-1}{4k+1} \quad \text{and} \quad  d_j= -\frac{k-(j-1)}{2k+1}
\end{equation}	
\end{remark}

\begin{remark}[Rationality and  existence of exceptional collections]
\label{rem:rationality} Note that, for any log del Pezzo surface $X$, its minimal resolution $\widetilde{X}$ is rational.
Indeed, by Castelnuovo's criterion \cite[Theorem 3.2]{CKS}, it suffices to show that
\[ H^1\left(\widetilde{X},\sO_{\widetilde{X}}\right)=0, \quad \mbox{and} \quad H^0\left(\widetilde{X}, \sO_{\widetilde{X}}(2K_{\widetilde{X}})\right)=0. \]

By the Leray spectral sequence (and since $X$ has rational singularities), we have $H^1(\widetilde{X},\sO_{\widetilde{X}})=H^1\left(X,\sO_{X}\right)$, which vanishes by the Kawamata-Viehweg vanishing theorem \cite[Theorem 9.1.18]{Laz04_PosII} combined with the del Pezzo assumption. 
For the second vanishing, we may argue as follows.
Since the singularities are log-terminal, we may write 
\[ f^*K_X = K_{\widetilde{X}} + \Delta \]
where $\Delta$ is effective. Since $-f^*K_X$ is big, so is $-K_{\widetilde{X}}= -f^*K_X + \Delta$. This forces the vanishing of all groups $H^0(\widetilde{X}, \sO_{\widetilde{X}}(mK_{\widetilde{X}}))$, $m>0$. Thus  $\widetilde{X}$ is a smooth rational surface.

It follows that both $\widetilde{X}$ and the canonical stack $\sX$ admit full exceptional collections. 
Indeed,  $\widetilde{X}$ is obtained through a sequence of blow-ups from a minimal rational surface (i.e. $\pr 2$, $\pr 1 \times \pr 1$, or $\mathbb{F}_n$ with $n\geq 2$). Since minimal rational surfaces admit full exceptional collections, so does $\widetilde{X}$ by Orlov's formula \cite{Orl92}. Finally, in virtue of the decomposition \eqref{eq:sod}, $D(\sX)$ admits a full exceptional collection if $D(\widetilde{X})$ does. 
\end{remark}

\subsection{Explicit geometry of the minimal resolution}
Fix a generic polyonomial $h=h^{(k)}$, equivalently, 
$a,b\in \C\setminus \{0\}$, and polynomials $h_4,h_2,h_1$ as in Equation \eqref{eq:generic_X8k+4}. 
Let $X$ be the corresponding log del Pezzo surface and $ f \colon \widetilde{X} \to X$ its minimal resolution. Let $X^\prime  \subset \P(2,1,1,1)$ be the smooth del Pezzo surface of degree two defined by the polynomial $h^{(0)}$, i.e.  
\begin{equation}
    \label{eq:h0}h^{(0)}(x,y_1,y_2,z)=a x^{2}+h_4(y_1, y_2)+ x\left(b z^2+h_2(y_1, y_2)+h_1(y_1,y_2)z\right)
\end{equation}
where $x,y_1, y_2, z$ are now weighted homogeneous coordinates on $\P(2,1,1,1)$.
The hyperplane \mbox{$(x=0) \subset \P(2,1,1,1)$} intersects $X'$ in four lines $c_{i,0}$, $i=1,2,3,4$, which meet at the point $q\coloneqq(0:0:0:1)$.
The next Theorem shows that $\widetilde{X}$ is obtained from $X^\prime$ by a sequence of $4k+1$ smooth blow-ups.

\begin{thm} \label{thm:min-resol} 
There is a birational morphism $\tau \colon \widetilde{X}  \to X^\prime$, which contracts the $4k+1$ curves $C_{i,k}, \dots, C_{i,1},$ $i=1,2,3,4$, and $F$ to the point $q$, as pictured in  Figure \ref{fig:widetildeX-Xprime}.
\end{thm}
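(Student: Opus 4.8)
The plan is to realize $\widetilde{X}$ as an iterated blow-up of $X^\prime$ and to take $\tau$ to be the resulting contraction.

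The first step is to determine the dual graph of the exceptional configuration of $f\colon\widetilde X\to X$. By Remark~\ref{rem:min-names} the curves involved are $F$ (over $p$), the chains $C_{i,0},\dots,C_{i,k-1}$ (over each $p_i$), and the strict transforms $C_{i,k}$ of the four lines $c_i$. Examples~\ref{exa:1n11} and~\ref{exa:12k+1} give $F^2=-(4k+1)$, $C_{i,0}^2=-3$ and $C_{i,j}^2=-2$ for $1\le j\le k-1$, and show that the curves over $p_i$ form a linear chain $C_{i,0}-C_{i,1}-\dots-C_{i,k-1}$. Since each $c_i$ passes through both $p$ and $p_i$, its strict transform $C_{i,k}$ meets $F$ and meets the chain over $p_i$. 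To find $C_{i,k}^2$ and the attaching curve I would compute $K_{\widetilde X}\cdot C_{i,k}$ by adjunction, using $f^*K_X\cdot C_{i,k}=K_X\cdot c_i$ together with the discrepancies of the $C_{i,j}$ (Remark~\ref{rem:canonical-min}): requiring $K_{\widetilde X}\cdot C_{i,k}\in\Z$ forces $C_{i,k}$ to attach at the $(-2)$-end $C_{i,k-1}$ and yields $K_{\widetilde X}\cdot C_{i,k}=-1$, hence $C_{i,k}^2=-1$. Thus the configuration is a star centered at $F$ with four arms
\[
F-C_{i,k}-C_{i,k-1}-\dots-C_{i,1}-C_{i,0},\qquad i=1,2,3,4.
\]

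Next I would carry out the contraction. Each $C_{i,k}$ is a $(-1)$-curve, so contracting $C_{1,k},\dots,C_{4,k}$ raises $F^2$ by $4$ and turns each $C_{i,k-1}$ into a $(-1)$-curve meeting $F$. Iterating for $j=k,k-1,\dots,1$ contracts $4k$ curves, after which $F$ has become a $(-1)$-curve and each $C_{i,0}$ has self-intersection $-2$ and meets $F$; a final contraction of $F$ produces a smooth surface $S$ on which the four images of the $C_{i,0}$ are $(-1)$-curves all passing through the point $q\coloneqq\tau(F)$. By construction the composite $\tau\colon\widetilde X\to S$ contracts exactly $F$ and the $C_{i,j}$ with $j\ge1$ to $q$, and $K_S^2=K_{\widetilde X}^2+(4k+1)=2$.

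It remains to identify $S$ with $X^\prime$. The key tool is the monomial map
\[
\mu\colon\P(2,2k+1,2k+1,4k+1)\dashrightarrow\P(2,1,1,1),\qquad [x:y_1:y_2:z]\mapsto[\,x:y_1x^{-k}:y_2x^{-k}:zx^{-2k}\,],
\]
which satisfies $x^{4k}\,h^{(0)}(x,y_1x^{-k},y_2x^{-k},zx^{-2k})=h^{(k)}(x,y_1,y_2,z)$ and hence restricts to a birational map $X\dashrightarrow X^\prime$ that is an isomorphism on $\{x\ne0\}$. All curves contracted by $\tau$ lie over $\{x=0\}\cap X$, so on $\{x\ne0\}\cap X$ the maps $\tau\circ f^{-1}$ and $\mu$ agree and exhibit a dense open of $S$ isomorphic to $\{x\ne0\}\cap X^\prime$. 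Since $(x=0)\cap X^\prime$ is the union of four $(-1)$-curves $c_{i,0}$ meeting at $q$, matching these with the four boundary curves $\tau(C_{i,0})$ of $S$ and using $K_S^2=K_{X^\prime}^2=2$ upgrades the open isomorphism to a global one $S\cong X^\prime$ sending $\tau(C_{i,0})$ to $c_{i,0}$; this is the desired morphism $\tau\colon\widetilde X\to X^\prime$.

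I expect the main difficulty to be the dual-graph bookkeeping of the first step --- in particular establishing that $C_{i,k}$ attaches to the $(-2)$-end of the chain with $C_{i,k}^2=-1$, which rests on having the discrepancies exactly right --- and, in the last step, the passage from the open isomorphism to a genuine isomorphism $S\cong X^\prime$, where one must control the behaviour of $\tau$ and $\mu$ along the boundary lines rather than only on $\{x\ne0\}$.
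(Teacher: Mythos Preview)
Your approach is genuinely different from the paper's and largely sound, but the difficulty you flag in the final step is real and is exactly what the paper's method is designed to avoid.

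\textbf{Comparison with the paper.} The paper never contracts curves on $\widetilde X$ and then tries to recognize the result. Instead, it works entirely in the ambient toric varieties. It refines the fan $\Delta$ of $\P(2,2k+1,2k+1,4k+1)$ by a sequence of star subdivisions to obtain a fan $\widetilde\Sigma$ whose associated variety $\widetilde G$ contains $\widetilde X$; the key observation is a combinatorial identity among the new rays showing that \emph{the same} fan $\widetilde\Sigma$ is also a sequence of star subdivisions of the fan of $\P(2,1,1,1)$. This immediately produces a toric \emph{morphism} $\widetilde G\to\P(2,1,1,1)$, and $\tau$ is its restriction to $\widetilde X$. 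The identification of the image with $(h^{(0)}=0)$ then reduces to comparing on the common affine chart $U_\sigma$ opposite $\rho_1$, which is literally the same chart in both weighted projective spaces --- this is the toric incarnation of your monomial map $\mu$. Your computation $x^{4k}h^{(0)}(x,y_1x^{-k},y_2x^{-k},zx^{-2k})=h^{(k)}$ is correct and is exactly this chart comparison.

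\textbf{Where your argument needs more.} Your Steps~1--2 are fine; the integrality trick pinning $C_{i,k}$ to the $(-2)$-end is nice, though it tacitly assumes the strict transform meets the resolution chain transversally in a single curve, which deserves a sentence (it follows from $c_i$ being smooth at $p_i$ and lying along a toric boundary direction). The genuine gap is Step~3. Knowing that $\mu$ gives an isomorphism on $\{x\neq0\}$ and that $K_S^2=K_{X'}^2=2$ does \emph{not} upgrade the birational map $S\dashrightarrow X'$ to an isomorphism: two smooth projective surfaces can be isomorphic on dense opens with matching $K^2$ and matching boundary configurations without being isomorphic (and even here there is a $4$-dimensional family of degree-two del Pezzos with a generalized Eckardt point, so the numerics alone cannot pick out $X'$). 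What you actually need is that the rational map $\mu\circ f\colon\widetilde X\dashrightarrow X'$ extends to a morphism across the exceptional locus; once it does, it must factor through $\tau$ since its fibers over $q$ are connected and contain the $4k+1$ contracted curves. Proving that extension essentially forces you into local toric coordinates around each $p_i$ and $p$ --- i.e., back into the paper's computation --- or into an explicit base-point-freeness check for $(\mu\circ f)^*\sO_{X'}(1)$. Either route works, but neither is the one-line argument your sketch suggests.
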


\begin{figure}[ht!]
\tikzset{every picture/.style={line width=0.75pt}} 

\begin{tikzpicture}[x=0.75pt,y=0.75pt,yscale=-0.74,xscale=0.74]

\draw    (433.2,146.23) -- (433.5,229) ;
\draw    (474.19,140.54) -- (472.26,229.39) ;
\draw    (449.44,135.98) -- (455.5,226) ;
\draw    (494.42,141.82) -- (493.55,227.06) ;
\draw [color={rgb, 255:red, 65; green, 117; blue, 5 }  ,draw opacity=1 ][line width=1.5]    (416.18,218) .. controls (447.12,183.83) and (483.47,235.09) .. (514.41,200.91) ;
\draw    (553.02,141.68) -- (610.61,226.41) ;
\draw    (645.87,157.12) -- (569.82,214.75) ;
\draw    (620.13,137.57) -- (580.73,222.98) ;
\draw    (586.6,130.36) -- (594.5,241.51) ;

\draw [color={rgb, 255:red, 65; green, 117; blue, 5 }  ,draw opacity=1 ][line width=1.5]    (302.52,155.67) -- (290.5,233.09) ;
\draw [color={rgb, 255:red, 65; green, 117; blue, 5 }  ,draw opacity=1 ][line width=1.5]    (344.06,142.72) -- (331.69,234.76) ;
\draw [color={rgb, 255:red, 65; green, 117; blue, 5 }  ,draw opacity=1 ][line width=1.5]    (323.45,148.16) -- (312.5,229.66) ;
\draw [color={rgb, 255:red, 65; green, 117; blue, 5 }  ,draw opacity=1 ][line width=1.5]    (366.06,142.45) -- (355.21,236) ;
\draw [color={rgb, 255:red, 65; green, 117; blue, 5 }  ,draw opacity=1 ][line width=1.5]    (272.49,224.43) .. controls (306.4,194.15) and (346.24,239.57) .. (380.16,209.29) ;
\draw    (293.78,128.73) -- (301.93,181.21) ;
\draw    (335.37,111.57) -- (342.83,181.71) ;
\draw    (315.76,110.44) -- (324.3,186.22) ;
\draw    (359.19,113.82) -- (366.82,182.63) ;
\draw [color={rgb, 255:red, 65; green, 117; blue, 5 }  ,draw opacity=1 ][line width=1.5]    (15.5,311) .. controls (31.47,284.16) and (171.53,322.87) .. (187.5,296.03) ;
\draw    (56.96,23.32) -- (73.5,87) ;
\draw [color={rgb, 255:red, 65; green, 117; blue, 5 }  ,draw opacity=1 ][line width=1.5]    (33.21,215.88) -- (22.76,267.75) ;
\draw [color={rgb, 255:red, 65; green, 117; blue, 5 }  ,draw opacity=1 ][line width=1.5]    (60.3,101.98) -- (72.5,154) ;
\draw [color={rgb, 255:red, 65; green, 117; blue, 5 }  ,draw opacity=1 ][line width=1.5]    (74.01,68.26) -- (58.5,117) ;
\draw [color={rgb, 255:red, 65; green, 117; blue, 5 }  ,draw opacity=1 ][line width=1.5]    (23.8,246.4) -- (31.2,318.09) ;
\draw [color={rgb, 255:red, 65; green, 117; blue, 5 }  ,draw opacity=1 ][line width=1.5]    (68.5,254.45) -- (75.07,295.27) -- (77.92,312.86) ;
\draw [color={rgb, 255:red, 65; green, 117; blue, 5 }  ,draw opacity=1 ][line width=1.5]    (78.9,215.3) -- (66.86,273.6) ;
\draw [color={rgb, 255:red, 65; green, 117; blue, 5 }  ,draw opacity=1 ][line width=1.5]    (114.7,242.96) -- (122.13,314.83) ;
\draw [color={rgb, 255:red, 65; green, 117; blue, 5 }  ,draw opacity=1 ][line width=1.5]    (127.58,217.59) -- (109.82,270.69) ;
\draw [color={rgb, 255:red, 65; green, 117; blue, 5 }  ,draw opacity=1 ][line width=1.5]    (173.5,218) -- (153.1,287.52) ;
\draw [color={rgb, 255:red, 65; green, 117; blue, 5 }  ,draw opacity=1 ][line width=1.5]    (157.35,253.46) -- (164.74,330.13) ;
\draw  [dash pattern={on 0.84pt off 2.51pt}]  (122.26,170.12) -- (122.26,204.62) ;
\draw  [dash pattern={on 0.84pt off 2.51pt}]  (161.74,173.88) -- (162.69,208.38) ;
\draw  [dash pattern={on 0.84pt off 2.51pt}]  (80.27,172.5) -- (81.23,207) ;
\draw  [dash pattern={on 0.84pt off 2.51pt}]  (29.85,172.32) -- (30.81,206.82) ;
\draw    (515.32,175.14) -- (537.06,175.9) ;
\draw [shift={(539.06,175.97)}, rotate = 182] [color={rgb, 255:red, 0; green, 0; blue, 0 }  ][line width=0.75]    (10.93,-3.29) .. controls (6.95,-1.4) and (3.31,-0.3) .. (0,0) .. controls (3.31,0.3) and (6.95,1.4) .. (10.93,3.29)   ;
\draw    (390.37,175.22) -- (400.51,175.31) -- (410.72,175.39) ;
\draw [shift={(412.72,175.41)}, rotate = 180.49] [color={rgb, 255:red, 0; green, 0; blue, 0 }  ][line width=0.75]    (10.93,-3.29) .. controls (6.95,-1.4) and (3.31,-0.3) .. (0,0) .. controls (3.31,0.3) and (6.95,1.4) .. (10.93,3.29)   ;
\draw    (253.77,174.09) -- (275.4,174.09) ;
\draw [shift={(277.4,174.09)}, rotate = 180] [color={rgb, 255:red, 0; green, 0; blue, 0 }  ][line width=0.75]    (10.93,-3.29) .. controls (6.95,-1.4) and (3.31,-0.3) .. (0,0) .. controls (3.31,0.3) and (6.95,1.4) .. (10.93,3.29)   ;
\draw [line width=0.75]  [dash pattern={on 0.84pt off 2.51pt}]  (243.84,174.09) -- (232.61,174.09) ;
\draw    (190.5,175.19) -- (214.88,174.18) ;
\draw [shift={(216.88,174.09)}, rotate = 177.62] [color={rgb, 255:red, 0; green, 0; blue, 0 }  ][line width=0.75]    (10.93,-3.29) .. controls (6.95,-1.4) and (3.31,-0.3) .. (0,0) .. controls (3.31,0.3) and (6.95,1.4) .. (10.93,3.29)   ;
\draw    (19.96,25.32) -- (36.5,89) ;
\draw [color={rgb, 255:red, 65; green, 117; blue, 5 }  ,draw opacity=1 ][line width=1.5]    (23.3,103.98) -- (35.5,156) ;
\draw [color={rgb, 255:red, 65; green, 117; blue, 5 }  ,draw opacity=1 ][line width=1.5]    (37.01,70.26) -- (21.5,119) ;
\draw    (99.96,23.32) -- (116.5,87) ;
\draw [color={rgb, 255:red, 65; green, 117; blue, 5 }  ,draw opacity=1 ][line width=1.5]    (103.3,101.98) -- (115.5,154) ;
\draw [color={rgb, 255:red, 65; green, 117; blue, 5 }  ,draw opacity=1 ][line width=1.5]    (118.01,71.26) -- (102.5,120) ;
\draw    (140.96,26.32) -- (157.5,90) ;
\draw [color={rgb, 255:red, 65; green, 117; blue, 5 }  ,draw opacity=1 ][line width=1.5]    (144.3,104.98) -- (156.5,157) ;
\draw [color={rgb, 255:red, 65; green, 117; blue, 5 }  ,draw opacity=1 ][line width=1.5]    (158.01,71.26) -- (142.5,120) ;

\draw (108.84,-3) node [anchor=north west][inner sep=0.75pt]   [align=left] {$\widetilde{X}$};
\draw (583.78,78) node [anchor=north west][inner sep=0.75pt]   [align=left] {$X^\prime$};
\draw (598,195) node [anchor=north west][inner sep=0.75pt]   [align=left, font=\tiny] {$q$};
\draw (620,140) node [anchor=north west][inner sep=0.75pt]   [align=left, font=\tiny] {$c_{3,0}$};
\draw (631,165) node [anchor=north west][inner sep=0.75pt]   [align=left, font=\tiny] {$c_{4,0}$};
\draw (177.57,311.28) node [anchor=north west][inner sep=0.75pt]   [align=left, font=\tiny] {F};
\draw (545,125) node [anchor=north west][inner sep=0.75pt]   [align=left, font=\tiny] {$c_{1,0}$};
\draw (590,125) node [anchor=north west][inner sep=0.75pt]   [align=left, font=\tiny] {$c_{2,0}$};
\draw (76.45,280) node [anchor=north west][inner sep=0.75pt]   [align=left, font=\tiny] {$C_{2,k}$};
\draw (115,130.28) node [anchor=north west][inner sep=0.75pt]   [align=left, font=\tiny] {$C_{3,2}$};
\draw (157,130.28) node [anchor=north west][inner sep=0.75pt]   [align=left, font=\tiny] {$C_{4,2}$};
\draw (30,90) node [anchor=north west][inner sep=0.75pt]   [align=left, font=\tiny] {$C_{1,1}$};
\draw (111,45) node [anchor=north west][inner sep=0.75pt]   [align=left, font=\tiny] {$C_{3,0}$};
\draw (66,90) node [anchor=north west][inner sep=0.75pt]   [align=left, font=\tiny] {$C_{2,1}$};
\draw (35,130.28) node [anchor=north west][inner sep=0.75pt]   [align=left, font=\tiny] {$C_{1,2}$};
\draw (123.69,280) node [anchor=north west][inner sep=0.75pt]   [align=left, font=\tiny] {$C_{3,k}$};
\draw (162,280) node [anchor=north west][inner sep=0.75pt]   [align=left, font=\tiny] {$C_{4,k}$};
\draw (111,90) node [anchor=north west][inner sep=0.75pt]   [align=left, font=\tiny] {$C_{3,1}$};
\draw (155,90) node [anchor=north west][inner sep=0.75pt]   [align=left, font=\tiny] {$C_{4,1}$};
\draw (29,45) node [anchor=north west][inner sep=0.75pt]  [rotate=-359.17,font=\tiny] [align=left] {$C_{1,0}$};
\draw (150,45) node [anchor=north west][inner sep=0.75pt]  [rotate=-359.17, font=\tiny] [align=left] {$C_{4,0}$};
\draw (70,45) node [anchor=north west][inner sep=0.75pt]  [rotate=-359.17, font=\tiny] [align=left] {$C_{2,0}$};
\draw (31,280) node [anchor=north west][inner sep=0.75pt]   [align=left, font=\tiny] {$C_{1,k}$};
\draw (30.7,220) node [anchor=north west][inner sep=0.75pt]  [font=\tiny,rotate=-359.17] [align=left] {$C_{1,k-1}$};
\draw (77,220) node [anchor=north west][inner sep=0.75pt]  [font=\tiny,rotate=-359.17] [align=left] {$C_{2,k-1}$};
\draw (124.61,220) node [anchor=north west][inner sep=0.75pt]  [font=\tiny,rotate=-359.17] [align=left] {$C_{3,k-1}$};
\draw (171.56,220) node [anchor=north west][inner sep=0.75pt]  [font=\tiny,rotate=-359.17] [align=left] {$C_{4,k-1}$};
\draw (70,130.28) node [anchor=north west][inner sep=0.75pt]   [align=left, font=\tiny] {$C_{2,2}$};

\end{tikzpicture}
\caption{A picture of the morphism $\tau \colon \widetilde{X} \to X^\prime$. 
The surface $\widetilde{X}$ is obtained from the surface $X^\prime$ by a sequence of $4k+1$ smooth blow-ups. 
}
\label{fig:widetildeX-Xprime}

\end{figure}

\begin{proof}
Let $\Delta \subset N_\RR \simeq \RR^3$ be the fan of $\P$ and let $\rho_1, \rho_2, \rho_3, \rho_4$ be the primitive generators of the rays of $\Delta$. 
One may choose for instance: 
\[ \rho_1=(k,k,4k+1)^T \quad  \rho_2=(1,0,0)^T \quad   \rho_3=(0,1,0)^T \quad   \rho_4=(-1,-1,-2)^T
\]

Consider the star subdivision $\Sigma=\Delta^\ast(v)$, where
\begin{equation} \label{eq:v}
v=\frac{1}{4k+1}\left( 4\rho_1 +\rho_2+\rho_3\right)		
\end{equation}
 The toric variety $G=G_{\Sigma}$ is the weighted blow-up of $\P$ at $p$ with weights $1/(4k+1)(4,1,1)$. 
 Note that the ray $\rho_1$ belongs to the cone generated by $\rho_4$ and $v$ (see Figure \ref{fig:fan-vectors}); more precisely,
\[ \rho_1= \frac{1}{2}(\rho_4 +(2k+1)v)
\]

Let $Y\subset G$ be the strict transform of $X$.  The exceptional divisor of the morphism $ G \to \P$ is the torus-invariant divisor $D_{v} \subset G$. 
By construction, 
the exceptional curve $D_v \cap Y \subset Y$ over the point $p \in X$ is a $-(4k+1)$-curve. The line $D_{\rho_1} \cap D_{\rho_4} \subset G$ is a line of $1/(2k+1)(1,k)$ points, and the surface $Y$ intersects this line in four points (see the left of Figure \ref{fig:surfaces}).

Now consider 
the sequence of $k$ star-subdivisions of the fan $\Sigma$ defined by setting $\Sigma_0=\Sigma^\ast(v_0) $, where 
\begin{equation} \label{eq:v0}
	 v_0=\frac{1}{2k+1}(\rho_1 +k \rho_4)
\end{equation} and, for $i=1, \dots, k-1$,  $\Sigma_i=\Sigma_{i-1}^\ast(v_i) $, where 
\begin{equation}\label{eq:vi}
	 v_i=\frac{1}{k-(i-1)}(\rho_1+(k-i) v_{i-1})
\end{equation}
and let $G_i=G_{\Sigma_i}$ (see Figure \ref{fig:fan-vectors}). The variety $G_0=G_{\Sigma_0}$ is the weighted blow-up of $G$ along the line $D_{\rho_1} \cap D_{\rho_4}$ with weights $1/(2k+1)(1,k)$. For $i=1, \dots, k-1$, the line $D_{\rho_1} \cap D_{v_{i-1}} \subset G_{i-1}$ is a line of $\frac{1}{k-(i-1)}(1,k-i)$ points, and the variety $G_i$ is the weighted blow-up of $G_{i-1}$ along this line.

\definecolor{xdxdff}{rgb}{0.49019607843137253,0.49019607843137253,1}
\definecolor{zzttqq}{rgb}{0.6,0.2,0}
\definecolor{ccqqqq}{rgb}{0.8,0,0}
\definecolor{qqqqff}{rgb}{0,0,1}
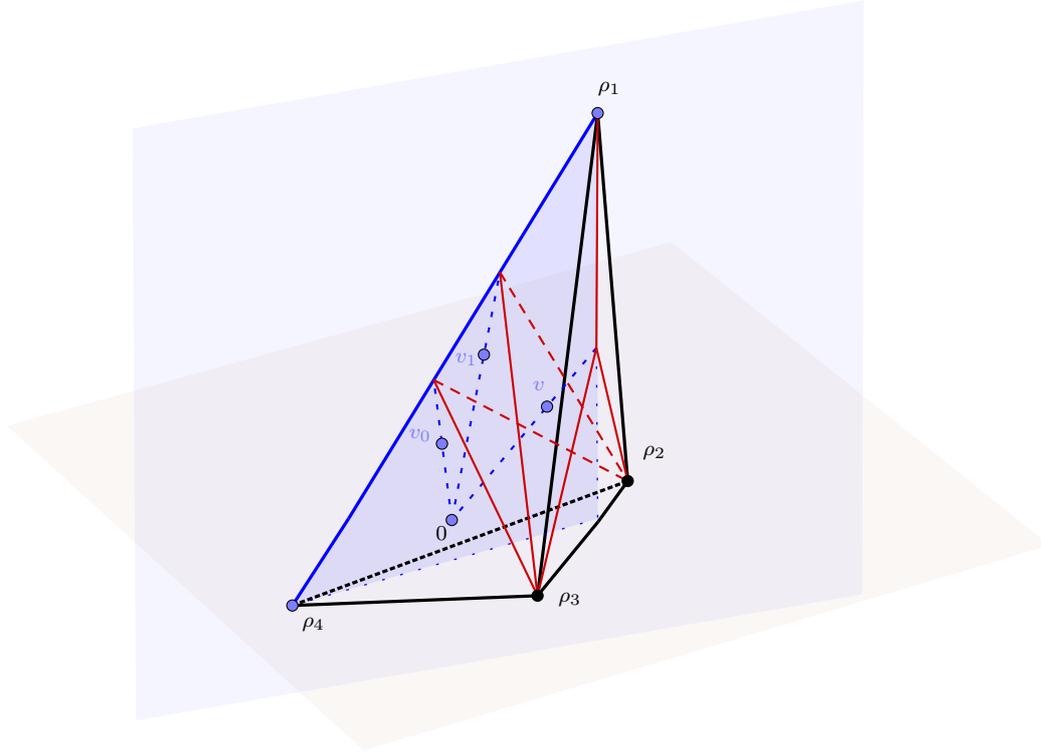
\begin{figure}[ht!]

\begin{tikzpicture}[scale=1.4]
\clip(1.9914948960418688,-2.9) rectangle (13.648297679014641,4.3);
\fill[line width=0pt,color=zzttqq,fill=zzttqq,fill opacity=0.04] (2.216457828145528,0.2319764088220546) -- (8.460428702931418,1.9736181365071375) -- (12.017824572245635,-0.8982379038246481) -- (5.570044559113618,-2.8436887698558575) -- cycle;
\fill[line width=0pt,color=qqqqff,fill=qqqqff,fill opacity=0.08]
(7.76871283945365,3.1933849493362287)-- (4.898973399687215,-1.4699590406016534)-- (7.76871283945365,-0.6494231031212336 ) -- cycle;
%

\fill[line width=2pt,color=qqqqff,fill=qqqqff,fill opacity=0.04]
(3.3960804167549288,3.0492280539165743) --  (3.428944429080794,-2.56023940829986283) -- (10.255855434742006,-1.36421245957177206) -- (10.269098938981704,4.2612251340788507) -- cycle;



\draw [line width=1.2pt] (7.76871283945365,3.1933849493362287)-- (8.050927408735216,-0.29128716326857784);
\draw [line width=1.2pt] (7.77314533253249,-0.6765582486396625)-- (7.203343137394577,-1.377254510923771);
\draw [line width=1.2pt] (7.77314533253249,-0.6765582486396625)-- (8.050927408735216,-0.29128716326857784);
\draw [line width=1.2pt] (7.76871283945365,3.1933849493362287)-- (7.203343137394577,-1.377254510923771);
\draw [line width=1.2pt] (4.898973399687215,-1.4699590406016534)-- (7.203343137394577,-1.377254510923771);
\draw [line width=1.2pt,color=qqqqff] (4.898973399687215,-1.4699590406016534)-- (5.4285203927986805,-0.6458927073068963);
\draw [line width=1.2pt,color=qqqqff] (5.4285203927986805,-0.6458927073068963)-- (7.76871283945365,3.1933849493362287);


\draw [line width=1.2pt,dash pattern=on 2pt off 1pt] (4.898973399687215,-1.4699590406016534)-- (8.050927408735216,-0.29128716326857784);

\draw [line width=0.5pt,dash pattern=on 1pt off 6pt,color=qqqqff] (4.898973399687215,-1.4699590406016534)-- (7.76871283945365,-0.6494231031212336 );
\draw [line width=0.5pt,dash pattern=on 1pt off 6pt,color=qqqqff] (7.756360770462977,0.9700125310148471)-- (7.76871283945365,-0.6494231031212336 );

\draw [line width=0.8pt,dash pattern=on 2pt off 4pt,color=qqqqff] (7.756360770462977,0.9700125310148471)-- (6.397633181488796,-0.6604605757541664);
\draw [line width=.8pt,dash pattern=on 2pt off 4pt,color=qqqqff] (6.397633181488796,-0.6604605757541664)-- (6.228070752252944,0.6658352383163377);
\draw [line width=.8pt,dash pattern=on 2pt off 4pt,color=qqqqff] (6.397633181488796,-0.6604605757541664)-- (6.849903766083704,1.686003301663429);

\draw [line width=.8pt,color=ccqqqq] (7.756360770462977,0.9700125310148471)-- (7.76871283945365,3.1933849493362287);
\draw [line width=.8pt,color=ccqqqq] (7.756360770462977,0.9700125310148471)-- (8.050927408735216,-0.29128716326857784);
\draw [line width=.8pt,color=ccqqqq] (7.756360770462977,0.9700125310148471)-- (7.203343137394577,-1.377254510923771);
\draw [line width=.8pt,color=ccqqqq] (6.228070752252944,0.6658352383163377)-- (7.203343137394577,-1.377254510923771);
\draw [line width=.8pt,color=ccqqqq] (6.849903766083704,1.686003301663429)-- (7.203343137394577,-1.377254510923771);
\draw [line width=.8pt,dash pattern=on 4pt off 3pt,color=ccqqqq] (6.849903766083704,1.686003301663429)-- (8.050927408735216,-0.29128716326857784);
\draw [line width=.8pt,dash pattern=on 4pt off 3pt,color=ccqqqq] (6.228070752252944,0.6658352383163377)-- (8.050927408735216,-0.29128716326857784);

\begin{scriptsize}
\draw [fill=xdxdff] (6.397633181488796,-0.6604605757541664) circle (1.5pt);
\draw[color=black] (6.3,-0.784605757541664) node {$0$};
\draw [fill=xdxdff] (7.76871283945365,3.1933849493362287) circle (1.5pt);
\draw[color=black] (7.8787618536191495,3.423515775966565) node {$\rho_1$};
\draw [fill=black]
 (8.050927408735216,-0.29128716326857784) circle (1.5pt);
\draw[color=black] (8.30255398928947,-0.01979532635477954) node {$\rho_2$};
\draw [fill=black] (7.203343137394577,-1.377254510923771) circle (1.5pt);
\draw[color=black] (7.50794373490762,-1.4103632715230148) node {$\rho_3$};
\draw [fill=xdxdff] (4.898973399687215,-1.4699590406016534) circle (1.5pt);
\draw[color=black] (5.0976259632826775,-1.6487463478375692) node {$\rho_4$};

\draw [fill=xdxdff] (6.305210379539784,0.06245885812077345) circle (1.5pt);
\draw[color=xdxdff] (6.100349806698174,0.1391267245215902) node {$v_0$};

\draw [fill=xdxdff] (6.699543457104778,0.9059060642266625) circle (1.5pt);
\draw[color=xdxdff] (6.534680916930308,0.8602239873828339) node {$v_1$};

\draw [fill=xdxdff] (7.292596070919589,0.4134948915627823) circle (1.5pt);
\draw[color=xdxdff] (7.216586641634276,0.6123798856300944) node {$v$};
\end{scriptsize}

\end{tikzpicture}
\caption{A picture of the fan $\widetilde{\Sigma}$ for $k=2$. The segments colored in blue all belong to the same plane. The three star-subdivisions of the fan $\Delta$ are represented by the segments in red. (For sake of clarity, the picture is out of scale.)}\label{fig:fan-vectors}
\end{figure}

Write $\widetilde{G}=G_{k-1}$ and $\widetilde{\Sigma}=\Sigma_{k-1}$, and 
let $\widetilde{X}$ be the strict transform of $X$ in $\widetilde{G}$. 
By construction, 
the minimal resolution $f \colon \widetilde{X} \to X$ is the restriction of the birational morphism $\widetilde{G} \to \P$. The curve $D_{v} \cap \widetilde{X} \subset \widetilde{X}$ is the $-(4k+1)$-exceptional curve $F$ over the point $p \in X$, and the intersection $\widetilde{X} \cap (D_{v_0}  \cup D_{v_1} \cup \dots \cup D_{v_{k-1}})$ is given by the union of the four chains 
 \[C_i=
 \bigcup_{j=0}^{k-1} C_{i,j} \quad  i=1,2,3,4\] where $C_{i,j}$ is the $j$th exceptional curve over the point $p_i \in X$ (see Remark \ref{rem:min-names} and the top of Figure \ref{fig:surfaces}).  The intersection $\widetilde{X} \cap D_{\rho_1} \subset \widetilde{X}$ is the union of the proper transforms  $C_{i,k}$ of the lines $c_i \subset X$, $i=1,2,3,4$.
Each $C_{i,k}$ has self-intersection $-1$; indeed $K_X.c_i=-\frac{1}{(2k+1)(4k+1)}$ since $c_i \in |\mathcal{O}_\P(2)| \oplus |\mathcal{O}_\P(2k+1)|$, thus by Remark \ref{rem:canonical-min} 
\[  K_{\widetilde{X}}. C_{i,k}= K_X.c_i -\frac{4k-1}{4k+1} -\frac{1}{2k+1}=-1
\]  and by  adjunction  $C_{i,k}^2=-1$.\smallskip
 
Now, observe that the vectors $v_1, \dots, v_{k-1}, \rho_1$ all belong to the cone 
$\langle v_0, v\rangle$ (see Figure \ref{fig:fan-vectors}). In particular, it follows from \eqref{eq:v} and \eqref{eq:v0} that $\rho_1=v_0+kv$; combining this and \eqref{eq:vi} one finds that:
\begin{equation}    v_i=v_{i-1}+v \quad  i=1, \dots, k-1 \qquad \text{and}  \quad \rho_1=v_{k-1}+v
\end{equation}
This implies that   
the fan $\widetilde{\Sigma}$ is obtained from the fan $\Sigma^\prime_0$ with rays $\rho_2, \rho_3, \rho_4, v_0,v$ and maximal cones 
\[\langle \rho_2, \rho_3, \rho_4 \rangle \quad \langle \rho_2,v_0, v\rangle  \quad \langle   \rho_2, \rho_4 , v_0\rangle \quad \langle \rho_3,v_0,  v\rangle  \quad \langle  \rho_3, \rho_4, v_0\rangle \quad \langle  \rho_2, \rho_3, v\rangle
\] by the sequence of $k$ star subdivisions: 
\[
\begin{split}
 &\Sigma^\prime_i=(\Sigma^\prime_{i-1})^\ast(v_{i-1}+v)  \quad i=1, \dots, k\\
\end{split} \] where  $\widetilde{\Sigma}={\Sigma}^\prime_k$. 
For $i=1, \dots, k$ the toric variety $G^\prime_i=G_{\Sigma^\prime_i}$ is the blow up of the toric variety $G^\prime_{i-1}$ along $D_{v_{i-1}} \cap D_{v}$. 

Let $Y^\prime$ be the proper transform of $\widetilde{X}$  in $G^\prime_0$. The morphism $\widetilde{G} \to G^\prime_0$ contracts the divisors $D_{\rho_1}, D_{v_{k-1}}, \dots,  D_{v_1}$ in $\widetilde{G}$, thus contracts the curves $C_{i,k}, C_{i, k-1}, \dots, C_{i,1}$, $i=1,2,3,4$, in $\widetilde{X}$. For $i=1,2,3,4$ the proper transform of $C_{i,0}$  in $Y^\prime$ is a $(-2)$-curve, and the proper transform  of $F$ in $Y^\prime$ is a $(-1)$-curve (see the right of Figure \ref{fig:surfaces}). 

Now, note that  the vector $v$ can be written in terms of $\rho_2, \rho_3, v_0$ as:
\begin{equation}
 v=\rho_2	+\rho_3+4v_0
\end{equation} and that $\rho_2, \rho_3, \rho_4, v_0$ span the lattice $N$ and satisfy:
\begin{equation}\label{eq:relation1112} 2v_0+\rho_2+\rho_3+\rho_4=0
\end{equation} This implies that $G^\prime_0$ is the weighted blow-up of $\P(2,1,1,1)$ at the smooth point $q=(0:0:0:1)$ with weights $(4,1,1)$. 

Let $X^\prime$ be the proper transform of $Y^\prime$ in $\P(2,1,1,1)$. The morphism $G_0^\prime \to \P(2,1,1,1)$ contracts the divisor $D_v \subset G_0^\prime$ to the point $q$, thus contracts the proper transform of $F$ in $Y^\prime$ to $q$. The proper transforms $c_{i,0} \subset X^\prime$ of the curves $C_{i,0} \subset \widetilde{X}$, $i=1,2,3,4$, are $(-1)$-curves meeting at the point $q$ (see the bottom right of Figure \ref{fig:surfaces}).

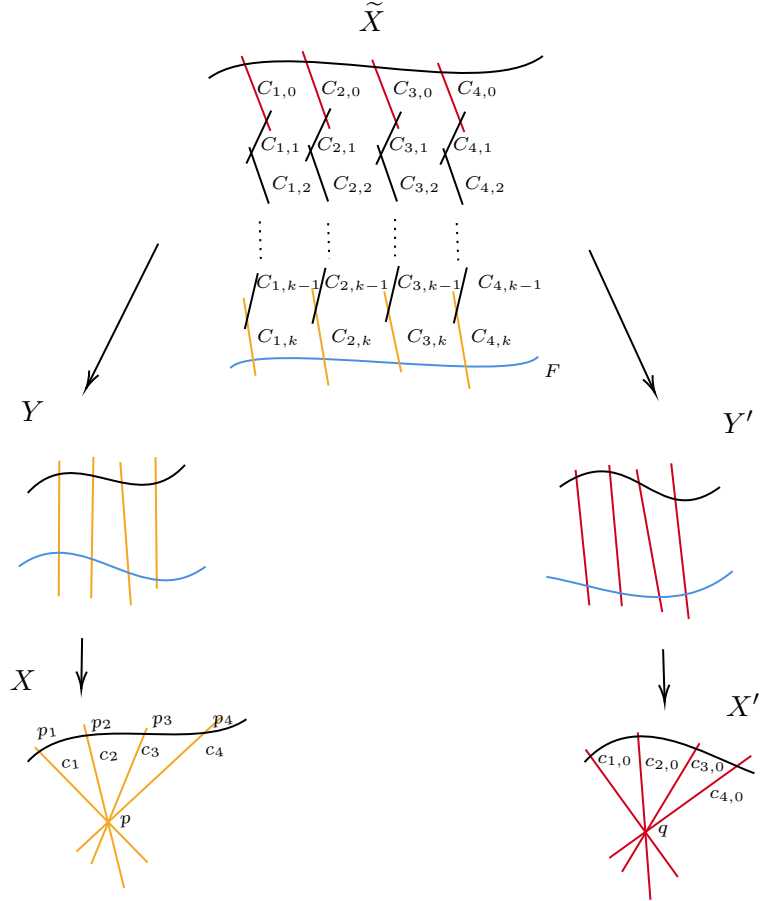
\begin{figure}[ht!]
\tikzset{every picture/.style={line width=0.75pt}} 

\begin{tikzpicture}[x=0.75pt,y=0.75pt,yscale=-0.8,xscale=0.8]

\draw [color={rgb, 255:red, 74; green, 144; blue, 226 }  ,draw opacity=1 ][line width=0.75]    (218.5,245.23) .. controls (234.47,226.92) and (394.53,256.31) .. (410.5,238) ;
\draw [color={rgb, 255:red, 208; green, 2; blue, 27 }  ,draw opacity=1 ]   (263.5,46) -- (280.5,94.77) ;
\draw [color={rgb, 255:red, 0; green, 0; blue, 0 }  ,draw opacity=1 ][line width=0.75]    (267.3,104.99) -- (279.5,140.47) ;
\draw [color={rgb, 255:red, 0; green, 0; blue, 0 }  ,draw opacity=1 ][line width=0.75]    (281.01,81.98) -- (265.5,115.23) ;
\draw [color={rgb, 255:red, 245; green, 166; blue, 35 }  ,draw opacity=1 ][line width=0.75]    (226.8,201.17) -- (234.2,250.06) ;
\draw [color={rgb, 255:red, 245; green, 166; blue, 35 }  ,draw opacity=1 ][line width=0.75]    (314.5,197) -- (325.13,247.84) ;
\draw [color={rgb, 255:red, 245; green, 166; blue, 35 }  ,draw opacity=1 ][line width=0.75]    (357.5,197) -- (367.74,258.27) ;
\draw [color={rgb, 255:red, 0; green, 0; blue, 0 }  ,draw opacity=1 ][line width=0.75]  [dash pattern={on 0.84pt off 2.51pt}]  (321.26,151.46) -- (321.26,174.99) ;
\draw [color={rgb, 255:red, 0; green, 0; blue, 0 }  ,draw opacity=1 ][line width=0.75]  [dash pattern={on 0.84pt off 2.51pt}]  (359.74,154.03) -- (360.69,177.56) ;
\draw [color={rgb, 255:red, 0; green, 0; blue, 0 }  ,draw opacity=1 ][line width=0.75]  [dash pattern={on 0.84pt off 2.51pt}]  (279.5,154.11) -- (280.23,176.61) ;
\draw [color={rgb, 255:red, 0; green, 0; blue, 0 }  ,draw opacity=1 ][line width=0.75]  [dash pattern={on 0.84pt off 2.51pt}]  (236.85,152.96) -- (237.5,176.62) ;
\draw [color={rgb, 255:red, 208; green, 2; blue, 27 }  ,draw opacity=1 ]   (225.5,49) -- (243.5,96.13) ;
\draw [color={rgb, 255:red, 0; green, 0; blue, 0 }  ,draw opacity=1 ][line width=0.75]    (230.3,106.35) -- (242.5,141.83) ;
\draw [color={rgb, 255:red, 0; green, 0; blue, 0 }  ,draw opacity=1 ][line width=0.75]    (244.01,83.35) -- (228.5,116.6) ;
\draw [color={rgb, 255:red, 208; green, 2; blue, 27 }  ,draw opacity=1 ]   (306.96,51.33) -- (323.5,94.77) ;
\draw [color={rgb, 255:red, 0; green, 0; blue, 0 }  ,draw opacity=1 ][line width=0.75]    (310.3,104.99) -- (322.5,140.47) ;
\draw [color={rgb, 255:red, 0; green, 0; blue, 0 }  ,draw opacity=1 ][line width=0.75]    (325.01,84.03) -- (309.5,117.28) ;
\draw [color={rgb, 255:red, 208; green, 2; blue, 27 }  ,draw opacity=1 ]   (347.96,53.38) -- (364.5,96.82) ;
\draw [color={rgb, 255:red, 0; green, 0; blue, 0 }  ,draw opacity=1 ][line width=0.75]    (351.3,107.04) -- (363.5,142.51) ;
\draw [color={rgb, 255:red, 0; green, 0; blue, 0 }  ,draw opacity=1 ][line width=0.75]    (365.01,84.03) -- (349.5,117.28) ;
\draw [color={rgb, 255:red, 0; green, 0; blue, 0 }  ,draw opacity=1 ][line width=0.75]    (235.65,185.56) -- (227.5,220.69) ;
\draw [color={rgb, 255:red, 0; green, 0; blue, 0 }  ,draw opacity=1 ][line width=0.75]    (323.63,181.03) -- (315.48,216.17) ;
\draw [color={rgb, 255:red, 0; green, 0; blue, 0 }  ,draw opacity=1 ][line width=0.75]    (366.06,181.97) -- (357.91,217.1) ;
\draw    (205,63) .. controls (245,33) and (373.5,79) .. (413.5,49) ;

\draw [color={rgb, 255:red, 245; green, 166; blue, 35 }  ,draw opacity=1 ]   (172.61,383.95) -- (171.82,301.19) ; 
\draw [color={rgb, 255:red, 245; green, 166; blue, 35 }  ,draw opacity=1 ]   (131.65,389.89) -- (133.06,301.02) ;
\draw [color={rgb, 255:red, 245; green, 166; blue, 35 }  ,draw opacity=1 ]   (156.43,394.3) -- (149.83,304.32) ;
\draw [color={rgb, 255:red, 245; green, 166; blue, 35 }  ,draw opacity=1 ]   (111.42,388.73) -- (111.78,303.49) ;
\draw [color={rgb, 255:red, 0; green, 0; blue, 0 }  ,draw opacity=1 ][line width=0.75]    (190.2,306.08) .. controls (159.46,340.44) and (122.81,289.4) .. (92.07,323.75) ;
\draw [color={rgb, 255:red, 74; green, 144; blue, 226 }  ,draw opacity=1 ]   (86.5,370) .. controls (126.5,340) and (163,400) .. (203,370) ;
\draw [color={rgb, 255:red, 208; green, 2; blue, 27 }  ,draw opacity=1 ]   (504.5,403) -- (493.57,305.11) ;
\draw [color={rgb, 255:red, 208; green, 2; blue, 27 }  ,draw opacity=1 ]   (462.79,394.94) -- (454.87,306.14) ;
\draw [color={rgb, 255:red, 208; green, 2; blue, 27 }  ,draw opacity=1 ]   (487.98,398.59) -- (471.96,308.91) ;
\draw [color={rgb, 255:red, 208; green, 2; blue, 27 }  ,draw opacity=1 ]   (442.48,394.4) -- (433.9,309.24) ;
\draw [color={rgb, 255:red, 74; green, 144; blue, 226 }  ,draw opacity=1 ]   (415.65,376.46) .. controls (449.5,382) and (495.13,404.09) .. (531.9,372.9) ;
\draw    (424,320) .. controls (469.5,287) and (484,350) .. (524,320) ;
\draw [color={rgb, 255:red, 208; green, 2; blue, 27 }  ,draw opacity=1 ]   (440.15,485.03) -- (497.74,565.22) ;
\draw [color={rgb, 255:red, 208; green, 2; blue, 27 }  ,draw opacity=1 ]   (543.5,489) -- (454.94,553.24) ;
\draw [color={rgb, 255:red, 208; green, 2; blue, 27 }  ,draw opacity=1 ]   (511.26,481.13) -- (462.85,561.97) ;
\draw [color={rgb, 255:red, 208; green, 2; blue, 27 }  ,draw opacity=1 ]   (472.72,474.31) -- (480.62,579.51) ;
\draw [color={rgb, 255:red, 245; green, 166; blue, 35 }  ,draw opacity=1 ][line width=0.75]    (269.5,195) -- (279.74,256.27) ;
\draw [color={rgb, 255:red, 0; green, 0; blue, 0 }  ,draw opacity=1 ][line width=0.75]    (277.85,182.2) -- (269.7,217.33) ;
\draw [color={rgb, 255:red, 245; green, 166; blue, 35 }  ,draw opacity=1 ]   (96.55,483.58) -- (166.86,556.08) ;
\draw [color={rgb, 255:red, 245; green, 166; blue, 35 }  ,draw opacity=1 ]   (212.5,465) -- (122.43,549.26) ;
\draw [color={rgb, 255:red, 245; green, 166; blue, 35 }  ,draw opacity=1 ]   (166.47,471.39) -- (131.72,556.97) ;
\draw [color={rgb, 255:red, 245; green, 166; blue, 35 }  ,draw opacity=1 ]   (127.11,469.17) -- (152.23,572.21) ;
\draw    (173.5,167) -- (129.39,256.21) ;
\draw [shift={(128.5,258)}, rotate = 296.31] [color={rgb, 255:red, 0; green, 0; blue, 0 }  ][line width=0.75]    (10.93,-3.29) .. controls (6.95,-1.4) and (3.31,-0.3) .. (0,0) .. controls (3.31,0.3) and (6.95,1.4) .. (10.93,3.29)   ;
\draw    (442.5,171) -- (482.67,259.18) ;
\draw [shift={(483.5,261)}, rotate = 245.51] [color={rgb, 255:red, 0; green, 0; blue, 0 }  ][line width=0.75]    (10.93,-3.29) .. controls (6.95,-1.4) and (3.31,-0.3) .. (0,0) .. controls (3.31,0.3) and (6.95,1.4) .. (10.93,3.29)   ;
\draw    (126,415) -- (125.53,445) ;
\draw [shift={(125.5,447)}, rotate = 270.9] [color={rgb, 255:red, 0; green, 0; blue, 0 }  ][line width=0.75]    (10.93,-3.29) .. controls (6.95,-1.4) and (3.31,-0.3) .. (0,0) .. controls (3.31,0.3) and (6.95,1.4) .. (10.93,3.29)   ;
\draw    (488.5,422) -- (489.44,453.34) ;
\draw [shift={(489.5,455.34)}, rotate = 268.28] [color={rgb, 255:red, 0; green, 0; blue, 0 }  ][line width=0.75]    (10.93,-3.29) .. controls (6.95,-1.4) and (3.31,-0.3) .. (0,0) .. controls (3.31,0.3) and (6.95,1.4) .. (10.93,3.29)   ;
\draw [color={rgb, 255:red, 0; green, 0; blue, 0 }  ,draw opacity=1 ][line width=0.75]    (228.5,466) .. controls (195.5,489) and (122.81,458.4) .. (92.07,492.75) ;
\draw [color={rgb, 255:red, 0; green, 0; blue, 0 }  ,draw opacity=1 ][line width=0.75]    (545.5,500) .. controls (512.62,487.54) and (470.23,458.64) .. (439.5,493) ;

\draw (167.19,462.2) node [anchor=north west][inner sep=0.75pt]  [rotate=-352.88] [align=left, font=\tiny] {$p_3$};
\draw (205,462.2) node [anchor=north west][inner sep=0.75pt]  [rotate=-352.88] [align=left, font=\tiny] {$p_4$};
\draw (95.47,469.88) node [anchor=north west][inner sep=0.75pt]  [rotate=-352.88] [align=left, font=\tiny] {$p_1$};
\draw (129.2,464.85) node [anchor=north west][inner sep=0.75pt]  [rotate=-352.88] [align=left, font=\tiny] {$p_2$};
\draw (147.19,525.2) node [anchor=north west][inner sep=0.75pt]  [rotate=-352.88] [align=left, font=\tiny] {$p$};
\draw (296.84,13.34) node [anchor=north west][inner sep=0.75pt]   [align=left] {$\widetilde{X}$};
\draw (412.5,241) node [anchor=north west][inner sep=0.75pt]   [align=left, font=\tiny] {$F$};

\draw (320.44,185) node [anchor=north west][inner sep=0.75pt]  [rotate=-358.45] [align=left,font=\tiny] {$C_{3, k-1}$};
\draw (370.71,185) node [anchor=north west][inner sep=0.75pt]  [rotate=-358.45] [align=left,font=\tiny] {$C_{4, k-1}$};
\draw (232.44,185) node [anchor=north west][inner sep=0.75pt]  [rotate=-358.45] [align=left,font=\tiny] {$C_{1, k-1}$};
\draw (275.03,185) node [anchor=north west][inner sep=0.75pt]  [rotate=-358.45] [align=left ,font=\tiny] {$C_{2, k-1}$};

\draw (279.45,219) node [anchor=north west][inner sep=0.75pt]   [align=left, font=\tiny] {$C_{2,k}$};
\draw (326.69,219) node [anchor=north west][inner sep=0.75pt]   [align=left,font=\tiny] {$C_{3,k}$};
\draw (233.45,219) node [anchor=north west][inner sep=0.75pt]   [align=left, font=\tiny] {$C_{1,k}$};
\draw (366.69,219) node [anchor=north west][inner sep=0.75pt]   [align=left,font=\tiny] {$C_{4,k}$};

\draw (242.47,122.75) node [anchor=north west][inner sep=0.75pt]   [align=left, font=\tiny] {$C_{1,2}$};
\draw (322.08,122.75) node [anchor=north west][inner sep=0.75pt]   [align=left ,font=\tiny] {$C_{3,2}$};
\draw (362.99,122.75) node [anchor=north west][inner sep=0.75pt]   [align=left,font=\tiny] {$C_{4,2}$};

\draw (235.53,98) node [anchor=north west][inner sep=0.75pt]   [align=left,font=\tiny] {$C_{1,1}$};
\draw (270.59,98) node [anchor=north west][inner sep=0.75pt]   [align=left,font=\tiny] {$C_{2,1}$};
\draw (315.5,98) node [anchor=north west][inner sep=0.75pt]   [align=left, ,font=\tiny] {$C_{3,1}$};
\draw (355.5,98) node [anchor=north west][inner sep=0.75pt]   [align=left, ,font=\tiny] {$C_{4,1}$};

\draw (318,63) node [anchor=north west][inner sep=0.75pt]   [align=left,font=\tiny] {$C_{3,0}$};

\draw (233.03,63) node [anchor=north west][inner sep=0.75pt]  [rotate=-359.17] [align=left,font=\tiny] {$C_{1,0}$};
\draw (357.94,63) node [anchor=north west][inner sep=0.75pt]  [rotate=-359.17] [align=left,font=\tiny] {$C_{4,0}$};
\draw (273.04,63) node [anchor=north west][inner sep=0.75pt]  [rotate=-359.17] [align=left,font=\tiny] {$C_{2,0}$};

\draw (280.47,122.75) node [anchor=north west][inner sep=0.75pt]   [align=left,font=\tiny] {$C_{2,2}$};

\draw (503.67,490) node [anchor=north west][inner sep=0.75pt]   [align=left, font=\tiny] {$c_{3,0}$};
\draw (515.67,508.83) node [anchor=north west][inner sep=0.75pt]   [align=left, font=\tiny] {$c_{4,0}$};
\draw (445.73,486.46) node [anchor=north west][inner sep=0.75pt]   [align=left, font=\tiny] {$c_{1,0}$};
\draw (475.29,488.83) node [anchor=north west][inner sep=0.75pt]   [align=left, font=\tiny] {$c_{2,0}$};
\draw (483.29,530.83) node [anchor=north west][inner sep=0.75pt]   [align=left, font=\tiny] {$q$};

\draw (160.19,482.2) node [anchor=north west][inner sep=0.75pt]  [rotate=-352.88] [align=left, font=\tiny] {$c_3$};
\draw (200.19,482.2) node [anchor=north west][inner sep=0.75pt]  [rotate=-352.88] [align=left, font=\tiny] {$c_4$};
\draw (110.47,489.88) node [anchor=north west][inner sep=0.75pt]  [rotate=-352.88] [align=left, font=\tiny] {$c_1$};
\draw (134.2,484.85) node [anchor=north west][inner sep=0.75pt]  [rotate=-352.88] [align=left, font=\tiny] {$c_2$};

\draw (86,264) node [anchor=north west][inner sep=0.75pt]   [align=left] {$Y$};
\draw (524,270) node [anchor=north west][inner sep=0.75pt]   [align=left] {$Y^\prime$};
\draw (79,433) node [anchor=north west][inner sep=0.75pt]   [align=left] {$X$};
\draw (526,448) node [anchor=north west][inner sep=0.75pt]   [align=left] {$X^\prime$};
\end{tikzpicture}
\caption{A picture of the surfaces $X, Y, \widetilde{X}$, $Y^\prime$, and $X^\prime$. The surface $Y$ is the weighted blow-up of $X$ at $p$ with weights $1/(4k+1)(1,1)$; the surface $Y^\prime$ is the blow-up of $X^\prime$ at the smooth point $q$. The curves $F$, $C_{i,0},\dots, C_{i, k-1}$, $i=1,2,3,4$, are the exceptional curves of the minimal resolution $\widetilde{X} \to X$, while the curves $F$, $C_{i,1},\dots, C_{i, k}$, $i=1,2,3,4$, are the exceptional curves of the morphism $\widetilde{X} \to X^\prime$. We use the same color for a curve and its proper transforms under the maps considered.}
\label{fig:surfaces}
\end{figure}
It follows from \eqref{eq:generic_X8k+4} and \eqref{eq:relation1112} that the surface $X^\prime$ is the smooth del Pezzo surface of degree two cut out of $\P(2,1,1,1)$ by the section of $\mathcal{O}(4)$ given by the polynomial $h^{(0)}$ in \eqref{eq:h0}. Indeed, by \eqref{eq:relation1112},
$\Delta$ and the fan of $\P(2,1,1,1)$ share the cone  $\sigma=\langle \rho_2, \rho_3, \rho_4 \rangle$.
Writing $U_\sigma \simeq  \CC^3/\Z_2(1,1,1)$ for the affine toric variety corresponding to $\sigma$, one has that $X \cap U_\sigma \simeq  X^\prime \cap U_\sigma$, and,  by \eqref{eq:generic_X8k+4},  $X \cap U_\sigma$ is defined by the equation:
\[ (a+h_4(y_1,y_2)+bz^2+h_2(y_1,y_2)+h_1(y_1,y_2)z =0) \subset U_\sigma
\]
where now $y_1,y_2, z$ denote affine coordinates on $\CC^3$. It follows that $X^\prime$ is cut out of $\P(2,1,1,1)$ by $h^{(0)}$, thus is a smooth del Pezzo surface of degree two.

The union of the four curves $c_{i,0}$ is given by $\left( x=h_4(y_1,y_2)=0 \right) \subset \P(2,1,1,1)$. The morphism $\tau \colon \widetilde{X} \to X^\prime$ in the statement of the Theorem is the restriction of the birational morphism $\widetilde{G} \to \P(2,1,1,1)$ constructed above.
 \end{proof}

An immediate consequence of Theorem \ref{thm:min-resol} is:

\begin{corollary} For $k>0$ integer, let $X=X^{(k)}$ be a generic log del Pezzo surface \eqref{eq:series} and let  $\widetilde{X}^{(k)}$  be its minimal resolution. Then $\widetilde{X}^{(k)}$ is the blow-up of $\P^2$ in $8+4k$ points. 
Moreover, for all $k>0$, $X^{(k)}$ is a rational surface of Picard rank $\rho(X^{(k)})=8$. 
\end{corollary}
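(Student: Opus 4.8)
The plan is to chain together the two birational descriptions already obtained and then read off the numerical invariants.

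First I would count the blow-ups. By Theorem \ref{thm:min-resol} the morphism $\tau\colon \widetilde{X}\to X'$ is a composition of $4k+1$ blow-ups of smooth (possibly infinitely near) points, all lying over $q\in X'$. Since $X'$ is a smooth del Pezzo surface of degree two, it is the blow-up of $\mathbb{P}^2$ at $7$ points (Section \ref{ssec:dP2}). Composing the two, $\widetilde{X}$ is obtained from $\mathbb{P}^2$ by $7+(4k+1)=8+4k$ successive point blow-ups, which is the first assertion. In particular, since each point blow-up raises the Picard rank by one,
\[
\rho(\widetilde{X}) \;=\; \rho(\mathbb{P}^2)+(8+4k)\;=\;9+4k.
\]

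Next I would transfer this to the singular surface $X$. As $X$ has only quotient singularities it is $\mathbb{Q}$-factorial, so $\rho(X)=\dim_{\mathbb{Q}}\mathrm{Cl}(X)_{\mathbb{Q}}$. The minimal resolution $f\colon\widetilde{X}\to X$ contracts exactly the $4k+1$ exceptional curves $F$ and $C_{i,0},\dots,C_{i,k-1}$, $i=1,2,3,4$ (Remark \ref{rem:min-names}; over $p$ there is the single $-(4k+1)$-curve $F$ as in Example \ref{exa:1n11}, and over each $p_i$ there are the $k$ curves resolving $\tfrac{1}{2k+1}(1,k)$ as in Example \ref{exa:12k+1}). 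Their intersection matrix is block diagonal and negative definite---the block of $F$ is the $1\times 1$ matrix $(-(4k+1))$ and the four chain blocks are the negative definite Hirzebruch--Jung matrices---so these classes are linearly independent in $\mathrm{Pic}(\widetilde{X})_{\mathbb{Q}}$. The exact sequence for the resolution of a normal surface,
\[
0 \longrightarrow \bigoplus_{j=1}^{4k+1}\mathbb{Q}\,[E_j] \longrightarrow \mathrm{Pic}(\widetilde{X})_{\mathbb{Q}} \xrightarrow{\;f_*\;} \mathrm{Cl}(X)_{\mathbb{Q}} \longrightarrow 0,
\]
then gives $\rho(\widetilde{X})=\rho(X)+(4k+1)$, whence $\rho(X)=(9+4k)-(4k+1)=8$. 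Rationality of $X$ is immediate: $\widetilde{X}$ is rational by Remark \ref{rem:rationality}, and $f$ is birational, so $X$ is rational as well.

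The only real obstacle is making the Picard-rank bookkeeping on $X$ precise. One must fix the convention that $\rho(X)$ denotes $\dim_{\mathbb{Q}}\mathrm{Cl}(X)_{\mathbb{Q}}$ for the $\mathbb{Q}$-factorial surface $X$, and invoke the correct comparison of divisor class groups of $X$ and $\widetilde{X}$; the negative-definiteness of the exceptional intersection matrix is exactly what guarantees that the $4k+1$ contracted curves span a rank-$(4k+1)$ subspace complementary to $f^*\mathrm{Pic}(X)_{\mathbb{Q}}$, so that the two blow-up counts are consistent.
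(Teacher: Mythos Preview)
Your argument is correct and is exactly the computation the paper has in mind; the paper itself offers no proof beyond the phrase ``an immediate consequence of Theorem \ref{thm:min-resol}'', and you have simply spelled out the two blow-up counts and the standard Picard-rank comparison for a resolution that this phrase encodes.
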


Theorem \ref{thm:min-resol} highlights a relation between the surfaces of the series \eqref{eq:series} as $k$ varies. Fix a generic choice of $a,b\in \C\setminus \{0\}$, and of polynomials $h_4,h_2,h_1$. This defines a sequence of polynomials $\{h^{(k)}\}_{k>0}$, and with it sequences $\{X^{(k)}\}$ and $\{\widetilde{X}^{(k)}\}$ of log del Pezzo surfaces and their minimal resolutions. By Theorem \ref{thm:min-resol}, every surface $\widetilde{X}^{(k)}$ admits a birational morphism $\tau^{(k)} \colon  \widetilde{X}^{(k)} \to X^\prime$.

\begin{corollary} 
For $0<k_1 < k_2$, the map $\tau^{(k_2) }$
factors through the map $\tau^{(k_1) }$, as illustrated in 
Figure \ref{fig:morphisms}. 
\end{corollary}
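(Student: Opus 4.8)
The plan is to read off the factorization directly from the toric construction in the proof of Theorem \ref{thm:min-resol}, exploiting that, once the generic data $a,b,h_4,h_2,h_1$ is fixed, all of the combinatorial data is independent of $k$. Recall that the construction realizes $\widetilde{G}^{(k)}=G_{\widetilde{\Sigma}^{(k)}}$ as a chain of star subdivisions of the fan of $\mathbb{P}(2,1,1,1)$ (rays $\rho_2,\rho_3,\rho_4,v_0$ with $2v_0+\rho_2+\rho_3+\rho_4=0$): a first subdivision at $v=\rho_2+\rho_3+4v_0$ produces $\Sigma^\prime_0$, and $k$ further subdivisions at $v_1,\dots,v_k$ (with $v_i=v_{i-1}+v$) produce $\widetilde{\Sigma}^{(k)}=\Sigma^\prime_k$. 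The surface $\widetilde{X}^{(k)}$ is the strict transform of the fixed degree-two del Pezzo $X^\prime\subset\mathbb{P}(2,1,1,1)$, and $\tau^{(k)}$ is the restriction of the toric morphism $\widetilde{G}^{(k)}\to\mathbb{P}(2,1,1,1)$.

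The key observation I would make is that both $X^\prime$ and the subdivision data are $k$-independent: one computes $v_0=(0,0,1)^T$, $v=(1,1,4)^T$, and hence $v_i=v_0+iv=(i,i,4i+1)^T$ for all $i$ (with $v_k=\rho_1^{(k)}$ the final center). Therefore, for $k_1<k_2$, the chain producing $\widetilde{\Sigma}^{(k_2)}=\Sigma^\prime_{k_2}$ builds $\widetilde{\Sigma}^{(k_1)}=\Sigma^\prime_{k_1}$ in its first $k_1$ steps, the remaining centers $v_{k_1+1},\dots,v_{k_2}$ all lying in the cone $\langle v_{k_1},v\rangle$ of $\widetilde{\Sigma}^{(k_1)}$ since $v_{k_1+j}=v_{k_1}+jv$. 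As each star subdivision refines the fan, $\widetilde{\Sigma}^{(k_2)}$ refines $\widetilde{\Sigma}^{(k_1)}$, inducing a toric birational morphism $\pi_G\colon\widetilde{G}^{(k_2)}\to\widetilde{G}^{(k_1)}$ that commutes with the two projections to $\mathbb{P}(2,1,1,1)$.

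To conclude, I would restrict $\pi_G$ to the surfaces. Since strict transforms are functorial under composition of birational morphisms, $\widetilde{X}^{(k_2)}$ is the strict transform of $\widetilde{X}^{(k_1)}$ along $\pi_G$; in particular $\pi_G$ maps $\widetilde{X}^{(k_2)}$ onto $\widetilde{X}^{(k_1)}$ and restricts to a morphism $\pi\colon\widetilde{X}^{(k_2)}\to\widetilde{X}^{(k_1)}$ (the blow-down of the exceptional curves introduced at steps $k_1+1,\dots,k_2$). Commutativity of the toric triangle over $\mathbb{P}(2,1,1,1)$ then yields $\tau^{(k_1)}\circ\pi=\tau^{(k_2)}$, the asserted factorization of Figure \ref{fig:morphisms}.

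I expect the only genuine obstacle to be the bookkeeping underlying the $k$-independence of $\Sigma^\prime_0$ and of the vectors $v_i$: this is precisely what makes the subdivision chain for $k_2$ pass through that for $k_1$, and once it is in hand both the refinement step and the restriction to the strict transforms are formal.
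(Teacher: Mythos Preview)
Your argument is correct and is precisely the natural unpacking of the construction in the proof of Theorem~\ref{thm:min-resol}: the paper states the corollary without proof, treating it as immediate from that construction and the accompanying figure. Your explicit verification that $v_0=(0,0,1)^T$, $v=(1,1,4)^T$ and $v_i=v_0+iv$ are $k$-independent, so that $\widetilde{\Sigma}^{(k_2)}=\Sigma'_{k_2}$ refines $\widetilde{\Sigma}^{(k_1)}=\Sigma'_{k_1}$, is exactly the bookkeeping the paper leaves to the reader.
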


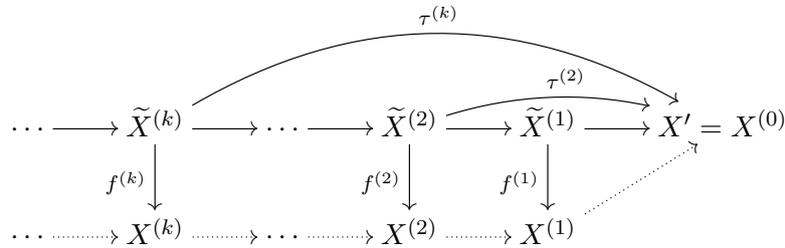
\begin{figure}[ht!] \begin{equation*}
\xymatrix{
 \cdots \ar[r] & \widetilde{X}^{(k)}  \ar@/^3pc/^{\tau^{(k)}}[rrrr] \ar[r] \ar_{f^{(k)}}[d] & \cdots \ar[r] & \widetilde{X}^{(2)} \ar@/^1pc/^{\tau^{(2)}}[rr] \ar[r] \ar[d]_{f^{(2)}}
&  \widetilde{X}^{(1)}  \ar[r]  \ar[d]_{f^{(1)}} & X^\prime=X^{(0)} \\
\cdots  \ar@{.>}[r] & {X}^{(k)} \ar@{.>}[r]   &  \cdots \ar@{.>}[r]  & {X}^{(2)} \ar@{.>}[r] & {X}^{(1)}  \ar@{.>}[ur] &  }
\end{equation*}
\caption{The morphims $f^{(k)} \colon \widetilde{X}^{(k)} \to X^{(k)}$ and $\tau^{(k)} \colon  \widetilde{X}^{(k)} \to X^\prime$. For all $k>0$, the surface $\widetilde{X}^{(k+1)}$ is the blow-up at four points of the surface $\widetilde{X}^{(k)}$. We use dashed arrows to denote rational maps and solid arrows to denote morphisms.}
\label{fig:morphisms}
\end{figure}

\begin{remark}\label{rem:surjectivity}
Any del Pezzo surface $X'$ as in Theorem \ref{thm:min-resol}
has a generalized Eckardt point, that is, a point where four $(-1)$-curves of $X'$ meet. 

By \cite[Theorem 5.1]{Kuw05} a del Pezzo surface $S$ of degree two admits a generalized Eckardt point if and only if,  after a suitable change of coordinates, it can be defined by an equation of the form
\begin{equation}\label{eq:normal-ourdp2}
\left( x^2+z^4+g_2(y_1,y_2)z^2+g_4(y_1, y_2)=0 \right) \subset \P(2,1,1,1)
	\end{equation}
where $x, y_1, y_2,z$ are weighted homogeneous coordinates on $\P(2,1,1,1)
$ and $g_i(y_1,y_2)$ is a polynomial of degree $i$ in the two variables $y_1, y_2$.

It is easy to see that, after a change of coordinates,  any equation of the form \eqref{eq:normal-ourdp2} can be rewritten as $(h^{(0)}=0)$, where $h^{(0)}$ is a polynomial as in \eqref{eq:h0}. Indeed, for instance, by sending
 $x \mapsto x+\mathtt{i} z^2+\frac{\mathtt{i}}{2}g_2(y_1,y_2)$,
 the polynomial in  \eqref{eq:normal-ourdp2}  takes the form:
\[ x^2-\frac{g_2(y_1,y_2)^2}{4} +g_4(y_1, y_2) +x
\left( 2 \mathtt{i}z^2+ \mathtt{i}g_2(y_1,y_2)\right)
\] 

Thus 
the surfaces $X^\prime$ appearing in Theorem \ref{thm:min-resol} 
are all del Pezzo surfaces of degree two with a generalized Eckardt point.
\end{remark}

\begin{remark} \label{rem:bijective}
Under the assignment $X \mapsto X'$ constructed in Theorem \ref{thm:min-resol}, to each surface $X$ corresponds bijectively a del Pezzo surface $X'$ with a generalized Eckardt point. Surjectivity is Remark \ref{rem:surjectivity}. For injectivity we may argue as follows. Suppose two log del Pezzo hypersurfaces, $X_1$ and $X_2$, get mapped to isomorphic del Pezzo surfaces of degree two $X_1'$ and $X_2'$ embedded in $\mathbb{P}(2,1,1,1)$. For $i=1,2$, write defining polynomials $h_i^{(0)}$ for $X_i'$. Since the surfaces are isomorphic, the $h_i^{(0)}$ are related by a change of coordinates $\psi$ of $\mathbb{P}(2,1,1,1)$. Then, $\psi$ must be a composition of transformations
\begin{equation}
\begin{split}
        x &\mapsto  r x,\\
        y_j &\mapsto  p_{1,j}y_1 + q_{2,j}y_2, \qquad  j=1,2\\
        z &\mapsto  s z + t_1y_1 +t_2 y_2
\end{split}
\end{equation}
(these are the projective transformation preserving the form \eqref{eq:h0}). It is a straightforward check that the same sequence of transformations, where $z$ is instead mapped to $s z + t_1x^ky_1 +t_2x^k y_2$, relates the polynomials $h_1^{(k)}$ and $h_2^{(k)}$ defining $X_1$ and $X_2$.
\end{remark}

\subsection{Moduli spaces of  Johnson--Koll\'ar surfaces} \label{sec:moduli} In this Section we use the construction of Theorem \ref{thm:min-resol} to obtain a statement about moduli spaces. As above, we fix $k>0$ integer. We start with the following remark.

\begin{remark}\label{rem:families_map}
The construction of Theorem \ref{thm:min-resol} can be carried out in families. More precisely, let $\mathscr X \to B$ be a flat family of Johnson-Koll\`ar hypersurfaces with base $B$. In other words, $\mathscr{X} \subset \mathbb P_B\coloneqq B \times \mathbb P$, and, if $\sO(1)$ be the tautological sheaf on $\mathbb P_B$, the family $\mathscr X$ is the zero locus of a general section of $\sO(8k+4)$. Equivalently, it is determined by a polynomial $h^{(k)}$ of the form \eqref{eq:generic_X8k+4}, where $a,b \in \sO_B(B)$ and $h_i \in H^0  (B \times \mathbb P , \sO(i(2k+1)))$ for $i=1,2,4$.

In this setting, the same argument as in the proof of Theorem \ref{thm:min-resol} gives a birational correspondence
\begin{equation}\label{eq:bir_correspondence}
    \begin{tikzcd}
         & \quad B \times \widetilde G \ar{dr} \ar{dl}  & \\
          \mathbb P_B   \arrow[rr,dashed] &   & B\times \mathbb P(2,1,1,1)
    \end{tikzcd}
\end{equation}
The correspondence, restricted to $\mathscr X$, maps it to a family $\mathscr X' \to B$ of smooth del Pezzo surfaces with a generalized Eckardt point. On each fiber $\mathscr X_b$ it coincides with $\tau\circ f^{-1} \colon \mathscr X_b \dashrightarrow \mathscr X'_b$. In other words, $\mathscr X'$ is the zero locus of the polynomial $h^{(0)}$, where now $h_i \in H^0(B \times \mathbb P(2,1,1,1) , \sO(i))$ for $i=1,2,4$.
\end{remark}

Now let $\mathscr{M}=\mathscr{M}_k$ be the moduli stack of quasismooth and wellformed hypersurfaces of degree $8k+4$ in the weighted projective $3$-space $\P=\P(2,2k+1,2k+1,4k+1)$, i.e. the open substack of the quotient stack $[|\mathcal{O}_\P(8k+4)| / \mathrm{Aut}(\P)]$ parametrizing quasismooth and wellformed hypersurfaces. 
The moduli stack $\mathscr M$ admits a coarse moduli space $M=M_k$ which  is an algebraic space as a consequence of \cite{KeelMori} and \cite[Theorem 3.13]{bunnett2021moduli}\footnote{ \cite{bunnett2021moduli} only refers to quasismooth hypersurfaces. 
However, we observed in Footnote \ref{footnote} that any quasismooth surface of the form \eqref{eq:series} is wellformed.}.
Similarly, one defines the moduli stack $\mathscr{P}$ of smooth del Pezzo surfaces of degree two, whose coarse moduli space $P$ is a quasiprojective variety by \cite[Example 5.22]{bunnett2021moduli}.
We denote by $\mathscr{P}_E$ the closed substack of $\mathscr P$ parametrizing surfaces with a generalized Eckardt point, and likewise define $P_E \subset P$.  The moduli space $P_E$ is a stratum of dimension $4$ of the moduli space $P$.  Surfaces in this stratum are called \emph{of type XII} in \cite[Table 8.9]{Dol_CAG}.

\begin{proposition}
\label{pro:moduli}
The moduli space $M$ is isomorphic to $P_E$.
\end{proposition}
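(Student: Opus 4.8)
The plan is to upgrade the bijection $X \mapsto X'$ of Theorem~\ref{thm:min-resol} to an isomorphism of coarse moduli spaces, and the cleanest route is to present both $M$ and $P_E$ as quotients of isomorphic parameter spaces by isomorphic groups. First I would produce the modular morphism itself: applying the family version of the construction (Remark~\ref{rem:families_map}) to the universal family over the parameter space $\mathcal{U}_k$ of quasismooth wellformed hypersurfaces yields a flat family of smooth degree-two del Pezzo surfaces; by Theorem~\ref{thm:min-resol} together with Remark~\ref{rem:surjectivity} every fibre carries a generalized Eckardt point. This family is classified by a morphism $\mathscr{M} \to \mathscr{P}_E$, and composing with $\mathscr{P}_E \to P_E$ and invoking the universal property of the coarse space $M$ produces a morphism $\Theta \colon M \to P_E$.

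To identify the two spaces I would compare GIT presentations. The monomial computation underlying \eqref{eq:generic_X8k+4} shows that $V_k \coloneqq H^0(\P,\sO_\P(8k+4))$ is spanned exactly by the terms in \eqref{eq:generic_X8k+4}, so $M = \mathcal{U}_k \sslash \Aut(\P)$ with $\mathcal{U}_k \subset \mathbb{P}(V_k)$ the quasismooth wellformed locus. On the other side, let $V_0 \subset H^0(\P(2,1,1,1),\sO(4))$ be the subspace of polynomials of the shape \eqref{eq:h0}, let $\mathcal{U}_0 \subset \mathbb{P}(V_0)$ be its smooth locus, and let $\Stab_0 \subset \Aut(\P(2,1,1,1))$ be the subgroup preserving that shape. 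Remark~\ref{rem:surjectivity} shows that every degree-two del Pezzo with a generalized Eckardt point lies, up to $\Aut(\P(2,1,1,1))$, in $\mathbb{P}(V_0)$, while the injectivity analysis of Remark~\ref{rem:bijective} shows that two members of $\mathbb{P}(V_0)$ are projectively equivalent precisely when related by an element of $\Stab_0$; together these give $P_E = \mathcal{U}_0 \sslash \Stab_0$.

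The coefficient matching $(a,b,h_4,h_2,h_1)$ gives a linear isomorphism $V_k \xrightarrow{\sim} V_0$ which, by the explicit correspondence of coordinate changes in Remark~\ref{rem:bijective} (namely $x\mapsto \lambda x$, $(y_1,y_2)\mapsto A(y_1,y_2)$, $z \mapsto \mu z + x^k(c_1 y_1 + c_2 y_2)$ on $\P$ versus $x \mapsto r x$, $(y_1,y_2)\mapsto A(y_1,y_2)$, $z \mapsto s z + t_1 y_1 + t_2 y_2$ on $\P(2,1,1,1)$), is equivariant for a group isomorphism $\Aut(\P) \cong \Stab_0$ and carries $\mathcal{U}_k$ onto $\mathcal{U}_0$. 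As a consistency check, both $\mathbb{P}(V_k)$ and $\mathbb{P}(V_0)$ have dimension $11$ and both groups have dimension $7$, so each quotient has dimension $4$, matching the stratum $P_E$. Passing to quotients gives an isomorphism $M \cong P_E$; since the coefficient matching is by construction the polynomial correspondence $h^{(k)} \leftrightarrow h^{(0)}$ of Theorem~\ref{thm:min-resol}, this isomorphism agrees with $\Theta$ on closed points, and as both are morphisms of reduced separated algebraic spaces they coincide. Hence $\Theta$ is an isomorphism.

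The main obstacle is the del Pezzo side, specifically establishing $P_E = \mathcal{U}_0 \sslash \Stab_0$ as algebraic spaces and not merely on isomorphism classes. Two points must be secured: that the normal form \eqref{eq:h0} can be achieved \'etale-locally in families, so that the slice $\mathbb{P}(V_0)$ with its $\Stab_0$-action genuinely presents the Eckardt stratum $\mathscr{P}_E$ as a quotient stack; and that any projective equivalence of two normal forms is forced to preserve the shape \eqref{eq:h0}, i.e.\ to lie in $\Stab_0$ (a surface with several Eckardt points must be treated with care here). This rigidity — that the normal form pins down the Eckardt point and the automorphisms fixing it — is precisely what makes the two group quotients coincide, and it is where the isomorphism-class argument of Remark~\ref{rem:bijective} must be strengthened to hold over an arbitrary base.
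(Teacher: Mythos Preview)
Your strategy is different from the paper's. Both begin the same way---construct $\Theta\colon M\to P_E$ from the family version of Theorem~\ref{thm:min-resol}---but then diverge. The paper does not present $P_E$ as a quotient at all; instead it builds the inverse morphism directly. Given a family $[\mathscr X'\to B]\in\mathscr P_E(B)$, written in the normal form $h^{(0)}$, the inverse of the toric birational correspondence~\eqref{eq:bir_correspondence} produces a family defined by $h^{(k)}$, hence an object of $\mathscr M(B)$; the coordinate-change computation of Remark~\ref{rem:bijective} then shows this inverts $\phi$ on points, and one concludes by the universal property. Your route through matching quotient presentations is more explicit and has the advantage of making the isomorphism visible at the level of parameter data, but it trades that for the foundational work you flag: one must know that the slice $\mathcal U_0$ with its $\Stab_0$-action really presents $\mathscr P_E$. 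Note that the paper's argument is not free of this issue either---the phrase ``written as the zero locus of a polynomial $h^{(0)}$'' is exactly the \'etale-local normal form assumption---so the obstacle you identify is common to both approaches, just packaged differently.

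Two technical points about your version. First, neither $\Aut(\P)$ nor $\Stab_0$ is reductive (the affine shifts $z\mapsto z+x^k(c\cdot y)$ are unipotent), so the symbol $\sslash$ must be read as ``coarse space of the quotient stack'' rather than classical GIT; this is consistent with how the paper invokes \cite{bunnett2021moduli}. Second, the group isomorphism $\Aut(\P)\cong\Stab_0$ is correct but not via the identity on the parameters $(\lambda,A,\mu,c)$: to make the linear identification $V_k\cong V_0$ equivariant one must send $(\lambda,A,\mu,c)$ to $(r,A,s,t)=(\lambda^{2k+1},A,\mu\lambda^{-k},c\lambda^{-k})$, and a short computation confirms this is a homomorphism whose kernel is exactly the weighted scalars. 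With that adjustment your outline goes through.
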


\begin{proof}
We begin by constructing a morphism $M \to P_E$, using the universal property of moduli spaces and Yoneda's Lemma.

Let $[\mathscr X \to B]\in \mathscr{M}(B)$ be a flat family of log del Pezzo surfaces with base $B$. 
The construction of Remark \ref{rem:families_map} gives rise to a family $[\mathscr X'\to B]\in \mathscr P_E(B)$. In turn, this induces a natural transformation from $\mathscr M$ to the functor of points $h_{P_E}$, and hence a natural transformation $h_{M}\to h_{P_E}$ by the universal property of $M$. By Yoneda's Lemma, this is equivalent to a morphism $\phi \colon M \to P_E$. 

Conversely, consider a family $[\mathscr{X}'\to B] \in \mathscr{P}_E(B)$, written as the zero locus of a polynomial $h^{(0)}$ as in Remark \ref{rem:surjectivity}. Then, the inverse of the birational correspondence \eqref{eq:bir_correspondence} maps the family $\mathscr X'$ to one in $\mathscr{M}(B)$, defined by the polynomial $h^{(k)}$. Arguing as in Remark \ref{rem:bijective}, we see that the induced map $P_E \to M$ inverts $\phi$.
\end{proof}


\subsection{Classification of generalized Eckardt points}
\label{ssec:Eckardt}
Suppose that $S$ is a del Pezzo surface of degree two with a generalized Eckardt point $q$. Denote by $m_l$, $l=1,2,3,4$, the four $(-1)$-curves meeting at the point $q$. 
In this Section we classify all the possible configurations $n_l=\pi(m_l)$, where $\pi \colon S \to \P^2$ is the blow-down of an exceptional set. Keeping this notation:


\begin{lemma}
\label{lem:imagesOfEckardtLines}
Let $\pi \colon S\to \pr 2$ be a blowup of $\pr 2$ at seven points $x_i$ in general position 
(equivalently, a choice of  
an exceptional set for $S$). 
Up to reindexing, we have the following possibilities for the curves $n_l$:
\begin{itemize}
\item $n_1$ and $n_2$ are lines, and $n_3$ and $n_4$ are conics;
\item $n_1$ is a point, and the remaining are conics, or $n_1$ is a cubic and the remaining are lines;
\item $n_1$ is a point, and the remaining are a line, a conic, and a cubic.
\end{itemize}
\end{lemma}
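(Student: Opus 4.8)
The plan is to pin down the numerical class of $m_1+m_2+m_3+m_4$ first, and then to read off the admissible degrees of the images $n_l=\pi(m_l)$ under the fixed blow-down.

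I would begin by recording intersection numbers on $S$. Each $m_l$ is a $(-1)$-curve, so $m_l^2=-1$ and $m_l\cdot(-K_S)=1$ by adjunction. The four curves are distinct and all pass through $q$; moreover no two of them are dual, since a dual pair consists of the two components of the preimage of a single bitangent, whereas the four $m_l$ arise from four distinct bitangents through $\sigma(q)$ (with $\sigma(q)\notin Q$, each such bitangent meets the fibre over $\sigma(q)$ in its two components separately, contributing only one curve through $q$). Hence the trichotomy recalled in Section \ref{ssec:dP2} forces $m_l\cdot m_j=1$ for $l\neq j$. With these numbers I would then show $m_1+m_2+m_3+m_4\sim -2K_S$ by a Hodge index argument: setting $F\coloneqq -2K_S-\sum_l m_l$ and writing $D=\sum_l m_l$, one has $D\cdot(-K_S)=4$ and $D^2=\sum_l m_l^2+2\sum_{l<j}m_l\cdot m_j=-4+12=8$, whence $F\cdot(-K_S)=4-4=0$ and $F^2=4K_S^2+4K_S\cdot D+D^2=8-16+8=0$. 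Since $-K_S$ is ample with $(-K_S)^2=2>0$, the Hodge index theorem makes the orthogonal complement of $-K_S$ negative definite, so $F\cdot(-K_S)=0$ together with $F^2=0$ gives $F=0$.

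Next I would pass to the blow-down $\pi$. Writing $-K_S=3H-\sum_{i=1}^7 E_i$, where $H$ is the pullback of a line and the $E_i$ are the exceptional divisors of the chosen exceptional set, the relation reads $m_1+m_2+m_3+m_4= 6H-2\sum_i E_i$. Intersecting with $H$ yields $\sum_l (m_l\cdot H)=6$, and $m_l\cdot H$ is exactly the degree of the plane curve $n_l$. A short lattice computation (solving $d^2-\sum_i a_i^2=-1$ and $3d-\sum_i a_i=1$ for a $(-1)$-class $dH-\sum_i a_i E_i$) shows that the degree $d=m_l\cdot H$ lies in $\{0,1,2,3\}$ and pins down the geometric type of $n_l$: degree $0$ forces an exceptional divisor (so $n_l$ is a point), degree $1$ a line through two of the $x_i$, degree $2$ a conic through five of them, and degree $3$ a nodal cubic, exactly as recalled in Section \ref{ssec:dP2}.

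It then remains to discard the degree sequence $\{0,0,3,3\}$. This follows immediately from $m_l\cdot m_j=1$: two distinct exceptional divisors $E_a,E_b$ are disjoint, $E_a\cdot E_b=0\neq 1$, so at most one of the $n_l$ can be a point. Enumerating the partitions of $6$ into four parts from $\{0,1,2,3\}$ with at most one zero leaves precisely $\{1,1,2,2\}$, $\{0,2,2,2\}$, $\{3,1,1,1\}$, and $\{0,1,2,3\}$, which are the three cases in the statement (the middle bullet combining $\{0,2,2,2\}$ and $\{3,1,1,1\}$). I expect the only delicate point to be the justification that the four $m_l$ are pairwise non-dual, hence meet only at $q$ with $m_l\cdot m_j=1$; once this is in place, everything downstream is forced by the single relation $\sum_l m_l\sim -2K_S$ and elementary lattice arithmetic.
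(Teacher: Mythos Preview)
Your argument is correct and takes a genuinely different route from the paper. The paper proceeds by ad hoc geometric elimination: it rules out configurations one at a time (two $n_l$'s cannot both be points since the $m_l$ meet; if one is a point and another a line, at most one of the remaining two can be a conic; three lines force the fourth to be a cubic; etc.), and then invokes the duality $m_l\mapsto m_l'=-K_S-m_l$ to cut the casework in half. You replace all of this with the single numerical identity $\sum_l m_l\sim -2K_S$, obtained cleanly from the Hodge index theorem, and then a partition count on degrees. This is shorter and more structural; it also makes transparent why the second bullet of the lemma pairs the cases $\{0,2,2,2\}$ and $\{3,1,1,1\}$, since the duality $m_l\mapsto m_l'$ acts as $d\mapsto 3-d$ on $m_l\cdot H$. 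The paper's approach, by contrast, is more elementary in that it avoids Hodge index and works directly with incidence of plane curves; it also extends verbatim to cliques that are not Eckardt (as the paper remarks), whereas your argument uses that the $m_l$ actually share a point to get $m_l\cdot m_j\geq 1$.

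One small remark on the non-duality step: you deduce it from $\sigma(q)\notin Q$, which you do not justify (the paper likewise only asserts this). A cleaner argument bypasses $\sigma$ entirely: if $m_1,m_2$ were dual then $m_1+m_2\sim -K_S$, so $m_3\cdot(m_1+m_2)=1$; but $m_3$ passes through $q\in m_1\cap m_2$, forcing $m_3\cdot m_1\geq 1$ and $m_3\cdot m_2\geq 1$, a contradiction. This also shows a posteriori that $\sigma(q)\notin Q$, since otherwise the covering involution would fix $q$ and send each $m_l$ to its dual, forcing the set $\{m_l\}$ to contain a dual pair.
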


\begin{proof}
Observe first of all that $n_k$ is a point for at most one $k$, since the $m_k$ are not disjoint. 

If $n_1=x_1$ is a point, then at most one of the other $n_k$ is a line: two lines in $\P^2$ containing $x_1$ are disjoint in the blowup, unless they coincide.

Suppose $n_1=x_1$ and $n_2$ is a line (containing $x_2$). At most one between $n_3,n_4$ is a conic: otherwise, $n_3$ and $n_4$ pass through $x_1$ with the same tangent direction of $n_2$, and therefore they intersect in $x_1$ (with multiplicity 2). Each conic must contain four points among ($x_3,...,x_7$), but if they share three they coincide, a contradiction.  

If $n_1,n_2,n_3$ are lines, $n_4$ cannot be a line: each line contains $\pi(q)$ and two of the $x_i$, and no point $x_i$ can be common to distinct lines. Similarly, $n_4$ is not a conic: if it was, it would have to share $\pi(q)$ and two of the $x_i$ with one among $n_1,n_2,n_3$, say $n_1$. But then $n_1\subset n_4$, a contradiction.  

At this point, we may argue dually on the curves $m_k'=-K_S-m_k$. In fact, they also intersect at a generalized Eckardt point $q'$: the bitangents corresponding to the $m_k$ (and hence to the $m_k'$) all meet at the point $\sigma(q)\in \pr 2\setminus Q$ (see Section \ref{ssec:dP2} for notation), and $q'$ is the other point in the preimage of $\sigma(q)$. Since the image of $m_k$ determines that of $m_k'$ and conversely, we can exclude all the configurations dual to those excluded above. For example, no more than one $n_k$ is a cubic, otherwise more than one $n_k'$ would be a point.
The only possibilities left are those in the statement: Example \ref{ex:EckardtPoints} shows that these do occur.
\end{proof}

\begin{example}[Construction of generalized Eckardt points]
\label{ex:EckardtPoints}
We present three constructions of seven points $x_1,...,x_7 \in \pr 2$ in general position. The blowup $\Bl_{\{x_i\}}\pr 2$ is a del Pezzo surface of degree two with a generalized Eckardt point (which is the preimage of $q$ in the first example, and lies in the preimage of $x_1$ in the other cases). In particular, these constructions are examples of the configurations listed in Lemma \ref{lem:imagesOfEckardtLines}. Observe moreover that these examples are not in general degenerations of one another: the families of examples all have dimension 4, as shown by a quick parameter count.
\begin{enumerate}[(a)]
\item \label{itm:EckA} Let $t_1,t_2$ be two lines in $\pr 2$ intersecting at a point $q$. Pick general points $x_1,x_2\in t_1$, $x_3,x_4\in t_2$. Let $s_1$ be a generic conic through $q,x_1,x_3$ and $s_2$ a generic conic through $q,x_2,x_4$. Then $s_1\cap s_2 = \{ q,x_5,x_6,x_7 \}$ (Figure \ref{fig:EckA}).
\item \label{itm:EckB}Let $x_1\in \pr 2$, and fix a conic $s_1$ through $x_1$. Define two general conics $s_2$ and $s_3$ which pass through $x_1$ with the same tangent direction of $s_1$. We have $s_1\cap s_2 = \{x_1,x_2,x_3\}$, $s_1\cap s_3=\{x_1,x_4,x_5\}$, and $s_2\cap s_3=\{x_1,x_6,x_7\}$. For generic choices, the points $x_i$, $i=1,...,7$ are in general position, and the blowup $\Bl_{\{x_i\}}\pr 2$ is a del Pezzo surface of degree two with a generalized Eckardt point at the intersection of the strict transforms of the $s_k$ with the exceptional divisor of $x_1$ (Figure \ref{fig:EckB}).
\item \label{itm:EckC}Fix a point $x_1\in \pr 2$, a line $t$ through it and a conic $s$ tangent to $t$ at $x_1$. Now pick generic points $x_2\in t$, and $x_3,...,x_6\in s$. Then, pick a nodal cubic in the 2-dimensional linear system of cubics containing $x_2,...,x_6$ and passing through $x_1$ with tangent direction $t$ (Figure \ref{fig:EckC}).  
\end{enumerate}
\end{example}

\begin{figure}[!ht]
\minipage{0.32\textwidth}
\begin{tikzpicture}[x=0.5pt,y=0.5pt,yscale=-0.9,xscale=0.9]

\draw    (251,53) -- (373.67,273.5) ;
\draw    (127.67,227.5) -- (305.67,65.5) ;
\draw   (136.68,181.08) .. controls (137.38,148.35) and (190.78,110.82) .. (255.95,97.24) .. controls (321.11,83.66) and (373.37,99.18) .. (372.67,131.91) .. controls (371.97,164.63) and (318.57,202.17) .. (253.4,215.75) .. controls (188.23,229.33) and (135.97,213.8) .. (136.68,181.08) -- cycle ;
\draw   (158.55,118.79) .. controls (151.18,83.11) and (196.75,70.17) .. (260.33,89.89) .. controls (323.91,109.61) and (381.43,154.52) .. (388.8,190.19) .. controls (396.17,225.87) and (350.6,238.81) .. (287.02,219.09) .. controls (223.44,199.38) and (165.92,154.47) .. (158.55,118.79) -- cycle ;


\draw (271,57.4) node [anchor=north west][inner sep=0.75pt]  [font=\small]  {$q$};
\draw  [fill=black] (273.99, 94.33) circle [x radius= 3.3, y radius= 3.3]   ;

\draw   (326.18, 188.14) circle [x radius= 3.3, y radius= 3.3]   ;
\draw   (348.4, 228.07) circle [x radius= 3.3, y radius= 3.3]   ;
\draw   (150.64, 206.59) circle [x radius= 3.3, y radius= 3.3]   ;
\draw   (193.16, 167.89) circle [x radius= 3.3, y radius= 3.3]   ;
\draw   (365.98, 152.33) circle [x radius= 3.3, y radius= 3.3]   ;
\draw   (165.85, 136.64) circle [x radius= 3.3, y radius= 3.3]   ;
\draw   (267.8, 212.31) circle [x radius= 3.3, y radius= 3.3]   ;
\end{tikzpicture}
\caption{
}
\label{fig:EckA}
\endminipage\hfill
\minipage{0.32\textwidth}
\begin{tikzpicture}[x=0.6pt,y=0.6pt,yscale=-1,xscale=1]
 \path (55,150);

\draw   (107.4,225.25) .. controls (101.21,212.45) and (129.77,185.82) .. (171.2,165.78) .. controls (212.63,145.73) and (251.24,139.86) .. (257.43,152.66) .. controls (263.63,165.46) and (235.06,192.09) .. (193.63,212.14) .. controls (152.2,232.18) and (113.6,238.06) .. (107.4,225.25) -- cycle ;
\draw   (124.21,193.79) .. controls (112.97,170.55) and (125.81,141.08) .. (152.91,127.97) .. controls (180,114.86) and (211.08,123.08) .. (222.33,146.32) .. controls (233.58,169.56) and (220.73,199.03) .. (193.63,212.14) .. controls (166.54,225.25) and (135.46,217.03) .. (124.21,193.79) -- cycle ;
\draw   (127.83,156.49) .. controls (108.88,117.34) and (107.63,78.78) .. (125.03,70.36) .. controls (142.43,61.94) and (171.9,86.86) .. (190.84,126.01) .. controls (209.78,165.16) and (211.03,203.72) .. (193.63,212.14) .. controls (176.23,220.56) and (146.77,195.64) .. (127.83,156.49) -- cycle ;


\draw   (201.34, 153.45) circle [x radius= 2.7, y radius= 2.7]   ;
\draw   (142.83, 181.76) circle [x radius= 2.7, y radius= 2.7]   ;
\draw   (126.18, 153) circle [x radius= 2.7, y radius= 2.7]   ;
\draw   (189.12, 122.54) circle [x radius= 2.7, y radius= 2.7]   ;
\draw   (124.9, 195.14) circle [x radius= 2.7, y radius= 2.7]   ;
\draw   (222.96, 147.69) circle [x radius= 2.7, y radius= 2.7]   ;
\draw   (193.63, 212.14) circle [x radius= 2.7, y radius= 2.7]   ;
\end{tikzpicture}
\caption{
}
\label{fig:EckB}
\endminipage\hfill
\minipage{0.32\textwidth}%
 \begin{tikzpicture}[x=0.5pt,y=0.5pt,yscale=-1,xscale=1]

\draw  [color=cite  ,draw opacity=1 ] (118.67,55.5) .. controls (152.67,17.5) and (255.01,7.29) .. (197.01,60.29) .. controls (139.01,113.29) and (249.67,215.5) .. (265.67,185.5) .. controls (281.67,155.5) and (74.67,161.5) .. (38.67,131.5) .. controls (2.67,101.5) and (84.67,93.5) .. (118.67,55.5) -- cycle ;
\draw    (177.67,-0.5) -- (14.67,154.5) ;
\draw   (92.05,150.41) .. controls (77.76,122.88) and (89.68,80.39) .. (118.67,55.5) .. controls (147.65,30.61) and (182.73,32.76) .. (197.01,60.29) .. controls (211.29,87.82) and (199.37,130.31) .. (170.39,155.19) .. controls (141.4,180.08) and (106.33,177.94) .. (92.05,150.41) -- cycle ;

\draw   (38.77, 131.58) circle [x radius= 3, y radius= 3]   ;
\draw   (117.97, 56.26) circle [x radius= 3, y radius= 3]   ;
\draw   (197.01, 60.29) circle [x radius= 3, y radius= 3]   ;
\draw   (189.64, 132.35) circle [x radius= 3, y radius= 3]   ;
\draw   (91.01, 148.28) circle [x radius= 3, y radius= 3]   ;
\draw   (166.97, 157.98) circle [x radius= 3, y radius= 3]   ;
\draw   (211.3, 163.2) circle [x radius= 3, y radius= 3]   ;
\end{tikzpicture}
\caption{
 }
 \label{fig:EckC}
\endminipage
\end{figure}

In fact, the configurations of Example \ref{ex:EckardtPoints} are the only possibility for generalized Eckardt points. Keeping the same notation as Lemma \ref{lem:imagesOfEckardtLines}, we have:

\begin{proposition}
\label{prop:ClassificationOfEckardtPoints} 
The map $\pi:S\to \pr 2$ is one of the blowups described in Example \ref{ex:EckardtPoints}. 
\end{proposition}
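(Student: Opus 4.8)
The plan is to upgrade the degree-type classification of Lemma \ref{lem:imagesOfEckardtLines} to a complete description of the seven points by computing in the Picard lattice $\Pic(S)\cong I^{1,7}$. Recall from Section \ref{ssec:dP2} that the classes of the $(-1)$-curves are the exceptional vectors of \cite[Proposition 8.2.19]{Dol_CAG}, and that they come in four families: the exceptional divisors $l_i$ (class $e_i$, with $\pi(l_i)=x_i$ a point), the lines $e_0-e_i-e_j$ through $x_i,x_j$, the conics $2e_0-\sum_{i\in S}e_i$ with $|S|=5$, and the nodal cubics $3e_0-2e_i-\sum_{j\ne i}e_j$. Since $q$ is a generalized Eckardt point, the four curves $m_1,\dots,m_4$ are distinct, pairwise non-dual $(-1)$-curves all passing through the single point $q$, so they meet pairwise there and are not dual, whence $m_l\cdot m_{l'}=1$ for all $l\ne l'$. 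For each configuration of Lemma \ref{lem:imagesOfEckardtLines} I would write down the class of every $m_l$ and impose these intersection relations; a direct computation in $I^{1,7}$ then determines, up to relabelling the $x_i$, exactly which base points each curve passes through.

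In the self-dual case of two lines and two conics, $m_1\cdot m_2=1$ forces the lines to pass through disjoint pairs, say $x_1,x_2$ and $x_3,x_4$, so their common point $\pi(q)$ is not a base point; then $m_3\cdot m_1=m_3\cdot m_2=m_3\cdot m_4=1$ pin the conics down to $2e_0-e_1-e_3-e_5-e_6-e_7$ and $2e_0-e_2-e_4-e_5-e_6-e_7$, and concurrency at $q$ forces both conics through $\pi(q)$. This is precisely the incidence of construction (a). In the case of a point $x_1$ and three conics we have $m_1=e_1$, so $q\in l_1$ and $\pi(q)=x_1$; the relations $m_j\cdot e_1=1$ force each conic through $x_1$, and $m_j\cdot m_{j'}=1$ forces the three defining index sets to meet pairwise in $\{1\}$ together with one of three disjoint pairs of the remaining six points, which is construction (b). The last self-dual case, a point, a line, a conic and a cubic, is identical in spirit and yields construction (c): $m_1=e_1$, the line, conic and cubic all pass through $x_1$ with the node of the cubic placed away from $x_1$, while concurrency at $q\in l_1$ becomes a common tangency at $x_1$.

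The only configuration not literally among the three examples is a cubic together with three lines. Here I would invoke the duality of Section \ref{ssec:dP2}: the dual curves $m_k'=-K_S-m_k$ meet at the second Eckardt point $q'$ over the same point of $\pr 2$, and since $-K_S=3e_0-\sum_i e_i$ the involution $m\mapsto -K_S-m$ interchanges points with cubics and lines with conics. Hence the four curves through $q'$ form the configuration of a point and three conics, and by the previous paragraph the blow-down $\pi$ realizes the point-configuration of construction (b). Thus $\pi$ is one of the three listed blow-ups in every case.

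The step I expect to be most delicate is the bookkeeping of intersection multiplicities when $\pi(q)$ is one of the blown-up points $x_i$. There the fact that the four strict transforms pass through the \emph{same} point $q$ of the exceptional divisor $l_i$ is invisible at the level of divisor classes; it forces the corresponding plane curves to be mutually tangent at $x_i$, sharing the single tangent direction determined by $q\in l_i\cong\pr 1$. One must check that this forced tangency is exactly what reconciles the B\'ezout count in $\pr 2$ with the relations $m_l\cdot m_{l'}=1$ on $S$, and that the resulting incidences are those produced by constructions (b) and (c); the parameter count already recorded in Example \ref{ex:EckardtPoints} then confirms that the three families are distinct and exhaustive.
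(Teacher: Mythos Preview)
Your proposal is correct and follows essentially the same approach as the paper: use Lemma \ref{lem:imagesOfEckardtLines} to reduce to the listed degree-types, impose the pairwise intersection condition $m_l\cdot m_{l'}=1$ to pin down which base points each $n_l$ passes through, and handle the ``cubic plus three lines'' configuration by passing to the dual Eckardt point $q'$. The only difference is cosmetic: the paper phrases the case analysis in geometric language (B\'ezout-type counts of intersections among the plane curves $n_l$) and only writes out the self-dual two-lines-two-conics case in detail, whereas you phrase the same constraints as computations in $I^{1,7}$ and sketch all cases, including the tangency analysis when $\pi(q)=x_1$ that the paper leaves implicit.
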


\begin{proof}
By Lemma \ref{lem:imagesOfEckardtLines}, there are only three possibilities for the $n_l$ (after possibly considering the dual curves $n_l'$). 
We only show one case, the others being similar. 

Suppose $n_1,n_2$ are lines and $n_3,n_4$ are conics. Since $n_1$ and $n_2$ intersect at $\pi(q)$, they cannot share the $x_i$'s they contain, so we may assume $x_1,x_2\in n_1$ and $x_3,x_4\in n_2$. The conic $n_3$ must contain 5 of the $x_i$ and pass through $q$, moreover it intersects $n_1$ and $n_2$ at two points ($n_3$ is smooth, otherwise the $x_i$ are not in general position). The only possibility is that $n_3$ contains, say, $x_1$ and $x_3$, whereas $x_2,x_4\in n_4$. Then, $n_3$ and $n_4$ must meet at $q$ and at $x_5,x_6,x_7$. This is the situation of Example \ref{ex:EckardtPoints} (a). 
\end{proof}


Later it will be useful to know that it is always possible to choose $\pi$ as in Example \ref{ex:EckardtPoints} (a). This is Corollary \ref{prop:casea} below, which follows from more general statements about \textit{cliques on} $S$. 

In graph theory, a clique is a complete subgraph of an undirected graph. Given a degree two del Pezzo surface $S$,  
by a clique on $S$ we mean a set of four $(-1)$-curves on $S$ whose dual graph is a clique in the dual graph of all $(-1)$-curves. The $(-1)$-curves meeting at a generalized Eckardt point form a clique, but the converse does not hold.

\begin{remark}
Lemma \ref{lem:imagesOfEckardtLines} holds if $\{m_1,...,m_4\}$ is only a clique on $S$, with a nearly identical proof. 
\end{remark}

We split the following Lemma in two for ease of reference, but part (ii) clearly implies part (i).

\begin{lemma} \label{lem:clique} Let $S$ be a del Pezzo surface of degree two and let $\{m_1, \dots, m_4\}$ be a clique on $S$. Then:
\begin{enumerate}[(i)]
    \item There exists an exceptional set $\sL$ on $S$ such that $\sL \cap \{m_1, \dots, m_4\}=\varnothing$.
\item Let $\{m_1^\prime, \dots, m_4^\prime\}$ be the dual clique. There exists an exceptional set $\sL$ on $S$ such that $\sL \cap \{m_1, \dots, m_4, m_1^\prime, \dots, m_4^\prime\}=\varnothing$.
\end{enumerate}
\end{lemma}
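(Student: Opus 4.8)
The plan is to prove the stronger statement (ii) by a counting argument inside $\Pic(S)\cong I^{1,7}$, showing that the exceptional sets \emph{containing} one of the eight curves $m_1,\dots,m_4,m_1',\dots,m_4'$ cannot exhaust all exceptional sets of $S$. Part (i) will then be immediate.

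First I would record the intersection combinatorics of the eight curves. Since the $m_i$ form a clique of four \emph{distinct} $(-1)$-curves, no two of them can be dual: for any $(-1)$-curve $m\notin\{m_i,m_i'\}$ one has $m\cdot m_i + m\cdot m_i' = m\cdot(-K_S)=1$, so no curve meets both $m_i$ and $m_i'$, and a dual pair inside the clique would prevent a third clique member from meeting both. Hence $m_i\cdot m_j=1$ for all $i\neq j$. The dual clique $\{m_l'\}=\{-K_S-m_l\}$ is the image of $\{m_l\}$ under the deck involution of the anticanonical double cover $\sigma$ (see Section \ref{ssec:dP2}), which is an automorphism of $S$ and hence preserves intersection numbers; thus it is again a clique. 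A one-line computation then gives, for $i\neq j$,
\[ m_i\cdot m_j' = m_i\cdot(-K_S-m_j) = 1-m_i\cdot m_j = 0, \qquad m_i\cdot m_i'=2, \]
so $m_i$ and $m_j'$ are \emph{disjoint} whenever $i\neq j$, and all eight curves are pairwise distinct.

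Next I would set up the count. Write $B=\{m_1,\dots,m_4,m_1',\dots,m_4'\}$, let $T$ be the total number of exceptional sets on $S$, and for a $(-1)$-curve $c$ let $A_c$ be the set of exceptional sets containing $c$. The Weyl group $W(E_7)$ of $\Pic(S)$ preserves $K_S$ and the intersection form, hence permutes the exceptional sets and acts transitively on the $56$ $(-1)$-curves; therefore every curve lies in the same number $N$ of exceptional sets. Counting curve–set incidences (each exceptional set having $7$ members) gives $56N=7T$, i.e. $8N=T$, so that
\[ \sum_{c\in B}|A_c| = 8N = T. \]
The union bound $\bigl|\bigcup_{c\in B}A_c\bigr|\le \sum_{c\in B}|A_c|=T$ holds with equality \emph{if and only if} the families $A_c$ are pairwise disjoint.

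The crucial input is therefore to produce an overlap, and this is the only delicate point. Here I would use that two disjoint $(-1)$-curves always extend to an exceptional set: contracting $m_1$ realizes $S$ as the blow-up of a smooth cubic surface, on which the image of the disjoint curve $m_2'$ is a line, and every line on a cubic surface belongs to a set of six mutually skew lines; pulling these back together with $m_1$ yields an exceptional set of $S$ containing both $m_1$ and $m_2'$. Thus $A_{m_1}\cap A_{m_2'}\neq\varnothing$, the families $A_c$ are not pairwise disjoint, and the inequality is strict, $\bigl|\bigcup_{c\in B}A_c\bigr|<T$. Consequently some exceptional set lies in none of the $A_c$, i.e. contains none of the eight curves, which is exactly (ii); part (i) follows a fortiori (indeed already from the crude bound $\sum_{l}|A_{m_l}|=4N=T/2<T$). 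Everything beyond the strictness of the union bound is bookkeeping with the standard action of $W(E_7)$ on the $56$ lines.
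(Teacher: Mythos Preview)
Your argument is correct and genuinely different from the paper's. The paper proves the lemma by a direct \textsc{SageMath} enumeration in the lattice $I^{1,7}$: it lists the $576$ exceptional sets and $630$ cliques, and checks case by case that every clique (and its dual) admits a disjoint exceptional set. Your proof replaces this computer check with a clean counting argument: the transitivity of $W(E_7)$ on the $56$ exceptional vectors gives the exact identity $\sum_{c\in B}|A_c|=T$, and you then break equality in the union bound by exhibiting a single overlap $A_{m_1}\cap A_{m_2'}\neq\varnothing$, using that two disjoint $(-1)$-curves always extend to an exceptional set (via the cubic-surface contraction). This is more conceptual and avoids machine verification entirely; the paper's approach, on the other hand, yields the explicit counts ($576$, $630$) as a by-product. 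Your preliminary step ruling out dual pairs inside the clique, and the resulting disjointness $m_i\cdot m_j'=0$ for $i\neq j$, is exactly what makes the overlap available, and is a nice observation the paper does not isolate.
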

\begin{proof}
\begin{enumerate}[(i)]
    \item The set of $(-1)$-curves on $S$ is in bijection with the set of exceptional vectors in $I^{1,7}$ 
(see Section \ref{ssec:dP2}). Identifying the two sets,  
one checks with a \textsc{SageMath} computation that on $S$ there are $576$ exceptional sets and $630$ cliques, and that every clique admits a disjoint exceptional set. disjoint from itself and its dual.
\item An analogous computation shows that some of the exceptional sets found in (i) are also disjoint from the dual clique. \qedhere
\end{enumerate}
\end{proof}

\begin{corollary}
\label{prop:casea}
Suppose that $S$ is a del Pezzo surface of degree two with a generalized Eckardt point $q$. Then $S$ can always be realized as the blow-up of $\P^2$ described in Example \ref{ex:EckardtPoints} (a).
\end{corollary}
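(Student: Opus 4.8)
The plan is to reduce the statement to part (ii) of Lemma~\ref{lem:clique}. First, recall that Example~\ref{ex:EckardtPoints}~(a) is precisely the configuration in which two of the curves $n_l = \pi(m_l)$ are lines and the other two are conics. By Lemma~\ref{lem:imagesOfEckardtLines}, this is the \emph{unique} configuration among the three listed possibilities in which none of the $n_l$ is a point and none is a nodal cubic: the first bullet is the two-lines-two-conics case, while each of the remaining bullets forces either a point or a cubic to appear. Thus it suffices to produce a blow-down $\pi \colon S \to \pr 2$ — equivalently (Section~\ref{ssec:dP2}) an exceptional set $\sL$ — for which no $n_l$ is a point and no $n_l$ is a cubic.

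Next I would translate these two conditions into disjointness statements about $\sL$. The image $n_l = \pi(m_l)$ is a point precisely when $m_l$ is contracted by $\pi$, that is, when $m_l \in \sL$. For the cubic condition, recall from Section~\ref{ssec:dP2} that the nodal cubic through six of the seven points, with a node at $x_i$, is dual to the exceptional divisor over $x_i$; concretely $m_l' = -K_S - m_l$ is that exceptional divisor exactly when $m_l$ is such a cubic. Hence $n_l$ is a cubic precisely when $m_l' \in \sL$. Combining the two, the requirement ``no $n_l$ is a point and no $n_l$ is a cubic'' is equivalent to asking that $\sL$ be disjoint from both the clique $\{m_1, \dots, m_4\}$ and its dual clique $\{m_1', \dots, m_4'\}$.

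Finally, the four $(-1)$-curves $m_1, \dots, m_4$ meeting at the generalized Eckardt point $q$ form a clique on $S$, and (as in the proof of Lemma~\ref{lem:imagesOfEckardtLines}) their duals $m_1', \dots, m_4'$ form a clique meeting at the dual Eckardt point $q'$. Lemma~\ref{lem:clique}~(ii) then supplies an exceptional set $\sL$ disjoint from $\{m_1, \dots, m_4, m_1', \dots, m_4'\}$; taking $\pi$ to be the associated blow-down, the discussion above rules out both points and cubics among the $n_l$, so by Lemma~\ref{lem:imagesOfEckardtLines} the configuration must be two lines and two conics, which is exactly Example~\ref{ex:EckardtPoints}~(a). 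The only genuine content is the \textsc{SageMath} verification behind Lemma~\ref{lem:clique}~(ii); everything else is the duality bookkeeping recorded above, so I expect no serious obstacle once that lemma is invoked.
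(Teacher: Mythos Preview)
Your proposal is correct and takes essentially the same approach as the paper: the paper's proof is the one-line ``An immediate consequence of Proposition~\ref{prop:ClassificationOfEckardtPoints} and Lemma~\ref{lem:clique}~(ii)'', and your argument simply unpacks this by making explicit the duality bookkeeping (that $n_l$ is a point iff $m_l \in \sL$, and $n_l$ is a cubic iff $m_l' \in \sL$) that links the classification to the disjointness condition in Lemma~\ref{lem:clique}~(ii). Your citation of Lemma~\ref{lem:imagesOfEckardtLines} in place of Proposition~\ref{prop:ClassificationOfEckardtPoints} is harmless, since once points and cubics are excluded the first bullet is the only surviving case already at the level of that lemma.
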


\begin{proof}
An immediate consequence of Proposition \ref{prop:ClassificationOfEckardtPoints} and Lemma \ref{lem:clique} (ii).	
\end{proof}


\section{The derived category of \texorpdfstring{$\mathcal{X}$}{sX}}
\label{sec:DerCat}
We construct full exceptional collections for $D(\widetilde X)$ in Section \ref{ssec_FullExcColl}; this directly implies the existence of full exceptional collections for $D(\sX)$. In Section \ref{ssec:HomSpaces}, after some additional choices, we compute the morphism spaces between objects in the above collections. The only non-vanishing spaces are $\Hom$ and $\Ext^1$. In Section \ref{ssec:tilting}, we observe that this is sufficient to produce tilting complexes for $D(\widetilde X)$ and $D(\sX)$.

\subsection{Full exceptional collections for \texorpdfstring{$\widetilde{X}$}{tildeX}}
\label{ssec_FullExcColl}
Several choices are involved in the construction of full exceptional collections for $\widetilde{X}$. Our guiding principle is to get exceptional collections whose Hom-spaces are as simple as possible, and we will make a choice explicit when it is made. 

We start by letting $\pi \colon X^\prime \to \P^2$ be a blow-down map which does not contract any of the lines $c_{i,0}$: such a $\pi$ always exists by Lemma \ref{lem:clique} (i). Let $\tau \colon \widetilde{X} \to X^\prime$ be the morphism in Theorem \ref{thm:min-resol} and write $\alpha=\tau \circ \pi$. 
For  $i=1,2,3,4$ and for $1 \leq l \leq k$, write $b_{i,l}=\mathcal{O}_{\sum_{j=l}^k C_{ij}}$ and 
$\mathcal{B}_i= \left(b_{i,1}, \dots, b_{i,k}\right)$.
Let $D\coloneqq F+\sum_{i=1}^4 \sum_{j=1}^k j C_{ij}$.

\begin{thm} \label{thm:Xtilde-cat}
  Let $(A_0, A_1, A_2)$ be a full exceptional collection on $\P^2$ and let $L_1, \dots, L_7$ be the strict transforms in $\widetilde{X}$ of the exceptional curves of $\pi\colon X^\prime \to \P^2$. Then \begin{equation} \label{eq:coll-sigmaX}
\sigma_{\widetilde{X}}=	 \left( \alpha^\ast(A_1), \alpha^\ast(A_1) ,\alpha^\ast(A_2), \mathcal{O}_{L_1}, \dots, \mathcal{O}_{L_7},\\
  \mathcal{O}_{D}, \mathcal{B}_1, \mathcal{B}_2,\mathcal{B}_3, \mathcal{B}_4 \right)
\end{equation} is a full exceptional collection on $\widetilde{X}$.
\end{thm}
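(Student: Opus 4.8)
The plan is to read off fullness from Orlov's blow-up formula \cite{Orl92} and the iterated blow-up structure of Theorem \ref{thm:min-resol}, and then to match the resulting generators with the explicit objects of $\sigma_{\widetilde X}$. The composite $\alpha\colon\widetilde X\to\P^2$ factors as $\widetilde X\xrightarrow{\tau}X'\xrightarrow{\pi}\P^2$, where $\pi$ is a blow-up at $7$ points and $\tau$ is a sequence of $4k+1$ point blow-ups with exceptional curves $F$ and $C_{i,j}$, $1\le j\le k$. Iterating Orlov's theorem over these $8+4k$ blow-ups yields a full exceptional collection consisting of the three pullbacks $\alpha^\ast A_0,\alpha^\ast A_1,\alpha^\ast A_2$ together with one twisted object per exceptional divisor. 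The count $3+(8+4k)=11+4k$ already equals $\rk K_0(\widetilde X)$ and matches the length of $\sigma_{\widetilde X}$, so the problem is to show that the objects of $\sigma_{\widetilde X}$ can be substituted for the Orlov generators.

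First I would verify exceptionality by a block computation. The pairings among the $\alpha^\ast A_i$ reduce to $\P^2$ via the projection formula and $\dR\alpha_\ast\sO_{\widetilde X}=\sO_{\P^2}$. Each $\sO_{L_i}$ is the structure sheaf of a $(-1)$-curve: the $L_i$ avoid $q$, since $\pi$ does not contract the $c_{i,0}$ (Lemma \ref{lem:clique}), so $\tau$ is an isomorphism near them, the $L_i$ are pairwise disjoint, and adjunction gives $\omega_{\widetilde X}|_{L_i}=\sO(-1)$; by Serre duality this yields $\Ext^\bullet(\sO_{L_i},\alpha^\ast A_j)=0$, the required semiorthogonality. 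For the last block, the chain relations
\[ 0\to\sO_{C_{i,l}}(-1)\to b_{i,l}\to b_{i,l+1}\to 0, \]
each $C_{i,l}$ meeting the remaining chain in a single point, exhibit the $b_{i,l}$ as iterated extensions of $\sO_{C_{i,1}}(-1),\dots,\sO_{C_{i,k-1}}(-1)$ and $\sO_{C_{i,k}}$; together with the configuration of $F,C_{i,j}$ and the multiplicities defining $D$, this reduces all remaining $\Ext$ computations to intersection numbers on $\widetilde X$ and to the cohomology of line bundles on trees of rational curves.

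For fullness I would show that $\langle\sigma_{\widetilde X}\rangle$ contains every Orlov generator. The chain relations give $\langle\mathcal{B}_i\rangle=\langle\sO_{C_{i,1}}(-1),\dots,\sO_{C_{i,k-1}}(-1),\sO_{C_{i,k}}\rangle$, so the $\mathcal{B}_i$ account for the $4k$ generators attached to the curves $C_{i,j}$. The generator attached to $F$ must be recovered from $\sO_D$: filtering $\sO_D$ by the subdivisors of $D=F+\sum_{i,j}jC_{ij}$ expresses it as an iterated extension of line bundles supported on $F$ and the $C_{i,j}$, and I would check that modulo $\langle\mathcal{B}_i\rangle$ this leaves exactly the $F$-generator. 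The discrepancies between the structure sheaves $\sO_{L_i},\sO_{C_{i,k}}$ and the corresponding Orlov objects $\sO_{L_i}(-1),\sO_{C_{i,k}}(-1)$ are then resolved uniformly by the sequences $0\to\sO_{L}(-1)\to\sO_{L}\to\sO_p\to 0$ once skyscrapers lie in the generated category. As a cross-check, since $\sigma_{\widetilde X}$ is exceptional of length $\rk K_0(\widetilde X)$ and $\widetilde X$ is rational (Remark \ref{rem:rationality}), fullness also follows from the absence of phantom subcategories on rational surfaces.

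The main obstacle is the identification of $\sO_D$ with the $F$-generator. Because $D$ is non-reduced with multiplicities growing along each of the four chains, pinning down the graded pieces of its filtration and checking that they lie in the subcategory generated by the $\mathcal{B}_i$ and the pullbacks requires careful bookkeeping of how $F$ meets the chains; it is here that the precise coefficients in $D=F+\sum_{i,j}jC_{ij}$, rather than an arbitrary effective combination, become essential. A secondary technical point is ensuring that the chosen ordering makes all the structure-sheaf-versus-twist substitutions compatible simultaneously, so that the mutations transforming the Orlov collection into $\sigma_{\widetilde X}$ can be performed without disturbing the already-verified semiorthogonality.
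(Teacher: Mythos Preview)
Your approach is the same as the paper's---iterated application of Orlov's blow-up formula---but you are missing the observation that makes the argument immediate rather than laborious. The objects $\sO_D$ and $b_{i,l}$ are not approximations to the Orlov generators that must be related by filtrations and mutations: they \emph{are} the Orlov generators. In the form $D(\tilde S)=\langle\beta^*D(S),\sO_E\rangle$, iterating over a tower $S_n\to\cdots\to S_0$ produces the objects $\gamma_i^*\sO_{E_i}$, which are structure sheaves of the \emph{total transforms} of the successive exceptional divisors. Tracing the blow-up order of $\tau$ (first $q$, giving $F$; then the four points $F\cap \tilde c_{i,0}$, giving $C_{i,1}$; then the four points $F\cap C_{i,1}$, giving $C_{i,2}$; and so on) one computes directly that the total transform of the first exceptional is $F+\sum_{i,j}jC_{i,j}=D$, and that the total transform of the exceptional $C_{i,l}$ at its birth is $\sum_{j\ge l}C_{i,j}$, whose structure sheaf is $b_{i,l}$. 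Likewise, since $q\notin l_i$, one has $\tau^*\sO_{l_i}=\sO_{L_i}$. So the collection $\sigma_{\widetilde X}$ is literally the output of Orlov's formula, up to reordering the mutually orthogonal blocks $\mathcal{B}_i$; no separate exceptionality or fullness verification is needed. This is why the paper's proof is two sentences.

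Two further comments. First, the ``discrepancy'' you anticipate between $\sO_E$ and $\sO_E(-1)$ is an artifact of choosing one of the two standard forms of Orlov's SOD; with the pullback on the left and $\sO_E$ on the right, there is nothing to fix. Second, your cross-check via ``absence of phantom subcategories on rational surfaces'' is not valid: phantoms do exist on some smooth rational surfaces, so an exceptional collection of maximal length need not be full without further argument. Drop that remark and rely on the direct identification above.
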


\begin{proof} 
Let $l_1, \dots, l_7$ be the exceptional curves of $\pi\colon X^\prime \to \P^2$. Then, because of our choice of the map $\pi$, we have that for all $i,j$ $c_{i,0} \neq l_j$, thus none of the $l_j$ contains the point $q$. This implies that $\tau^\ast{\sO_{l_i}}=\mathcal{O}_{L_i}$. Then the statement follows immediately by an iterative application of Orlov's blow-up formula \cite{Orl92} (as stated in \cite{Kuz14}). 
\end{proof}

Let $p, p_1, \dots, p_4$ be the five singular points on $X$, as in Section \ref{ssec:basic}.
Let $G=\frac{1}{4k+1}(1,1)$ and $H=\frac{1}{2k+1}(1,k)$. For $j \in \ZZ/_{4k+1}$ let $\rho_j$ be the irreducible representation of $G$  of weight $j$
and write $e_{j}=\sO_{p}\otimes \rho_j$. For $j \in \ZZ/_{2k+1}$ let $\eta_j$,  be the irreducible representation of $H$ of weight $j$ and write $e_{i,j}=\sO_{p_i}\otimes \eta_j$.  
 Then:
\begin{corollary}\label{thm:stack-cat}

The derived category $D(\mathcal{X})$ admits a full exceptional collection  of the form 
\begin{equation}
\sigma_{\mathcal{X}}= \left( \mathcal{E}_p, 
\mathcal{E}_{p_1}, \mathcal{E}_{p_2}, \mathcal{E}_{p_3}, \mathcal{E}_{p_4},  \Phi(\sigma_{\widetilde{X}}) \right),
\end{equation}
where
$
\mathcal{E}_p=\left( e_{2}, \dots, e_{4k} \right)$, $ \mathcal{E}_{p_i}=\left( e_{i,k+1}, \dots, e_{i,2k}\right) $,  $i=1,2,3,4$. Here, $\sigma_{\widetilde{X}}$ is the full exceptional collection in \eqref{eq:coll-sigmaX}, and  $\Phi$ is the fully faithful functor from Section \ref{ssec:global-case}.
\end{corollary}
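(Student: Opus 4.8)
The plan is to assemble $\sigma_{\mathcal{X}}$ directly from the semiorthogonal decomposition of the global $\GL(2,\C)$ McKay correspondence recalled in Section \ref{ssec:global-case}, combined with the full exceptional collection for $\widetilde{X}$ produced in Theorem \ref{thm:Xtilde-cat}. First I would invoke the SOD
\[
D(\sX) = \left\langle \mathbf{e}_1, \dots, \mathbf{e}_N, \Phi(D(\widetilde{X})) \right\rangle
\]
from Section \ref{ssec:global-case}, where $(\mathbf{e}_1, \dots, \mathbf{e}_N)$ is the exceptional collection formed by the local contributions at the five singular points $p, p_1, p_2, p_3, p_4$ of $X$. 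At $p$, which is of type $\frac{1}{4k+1}(1,1)$, Example \ref{exa:1n11} (with $n=4k+1$) identifies the local contribution $\phi(D(Y))^\perp$ with the exceptional collection of simple sheaves $\left(\sO_0\otimes\rho_{2},\dots,\sO_0\otimes\rho_{4k}\right)$, which globalizes to $\mathcal{E}_p=\left( e_{2}, \dots, e_{4k} \right)$. At each $p_i$, of type $\frac{1}{2k+1}(1,k)$, Example \ref{exa:12k+1} gives the collection $\left(\sO_0\otimes\rho_{k+1},\dots,\sO_0\otimes\rho_{2k}\right)$, globalizing to $\mathcal{E}_{p_i}=\left( e_{i,k+1}, \dots, e_{i,2k}\right)$.

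The next step is to record that the contributions at distinct singular points are \emph{completely} orthogonal: the objects of $\mathcal{E}_p$ are supported at $p$ and those of $\mathcal{E}_{p_i}$ at $p_i$, so every $\Hom$ and $\Ext$ group between sheaves supported at different points vanishes in both directions. Consequently the five blocks $\mathcal{E}_p,\mathcal{E}_{p_1},\dots,\mathcal{E}_{p_4}$ may be listed in the stated order and together constitute precisely the exceptional collection $(\mathbf{e}_1,\dots,\mathbf{e}_N)$ appearing in the SOD, with $N=(4k-1)+4k=8k-1$.

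Finally, since $\Phi$ is fully faithful with admissible image (Section \ref{ssec:global-case}), it preserves all morphism spaces, so applying it to the full exceptional collection $\sigma_{\widetilde{X}}$ of Theorem \ref{thm:Xtilde-cat} yields a full exceptional collection $\Phi(\sigma_{\widetilde{X}})$ of the admissible subcategory $\Phi(D(\widetilde{X}))$. Concatenating the local blocks with $\Phi(\sigma_{\widetilde{X}})$ then exhibits $\sigma_{\mathcal{X}}$ as a full exceptional collection of $D(\sX)$, and a length count, $(4k-1)+4k+(11+4k)=10+12k$ objects, matches the number announced after Theorem \ref{thm:Main1}. The only point demanding care is the precise identification of the \cite{IU15} local exceptional objects with the simple sheaves $e_j$ and $e_{i,j}$, together with their internal ordering; this is exactly the content of Examples \ref{exa:1n11} and \ref{exa:12k+1}, so beyond this bookkeeping I expect no genuine obstacle.
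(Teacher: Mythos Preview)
Your proposal is correct and follows exactly the approach of the paper, which proves the corollary in a single sentence by citing the global $\GL(2,\C)$ McKay correspondence of Section \ref{ssec:global-case}, Theorem \ref{thm:Xtilde-cat}, and Examples \ref{exa:1n11} and \ref{exa:12k+1}. Your write-up simply unpacks that sentence, including the orthogonality of the local blocks via disjoint supports and the length count, and there is nothing to correct.
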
 
\begin{proof}
An immediate consequence of the global $\GL(2,\CC)$ McKay correspondence \ref{ssec:global-case}, Theorem \ref{thm:Xtilde-cat}, Example \ref{exa:1n11} and Example \ref{exa:12k+1}.
\end{proof}

\subsection{Hom-spaces}\label{ssec:HomSpaces} There are various standard full exceptional collections $(A_0,A_1, A_2)$ on $\P^2$.
We choose $(A_0,A_1, A_2)=(\sO, \sT(-1),\sO(1))$, where $\sT$ is
the tangent bundle on $\P^2$, following \cite{AKO06}.
 Moreover, we choose the map $\pi \colon X^\prime \to \P^2$ to be a blow-up morphism as described in Example \ref{ex:EckardtPoints} (a): this is always possible by Corollary \ref{prop:casea}. 
 This choice implies that the images $\alpha(C_{i,0}) \subset \P^2$ of the  curves $C_{i,0}$ are 
two lines and two conics. Without loss of generality, we assume that $\alpha(C_{i,0})$ is a line if $i=1,2$, is a conic if $i=3,4$. Moreover, setting $x_j=\alpha(L_j)$, we also assume that  $x_1, x_2 \in \alpha(C_{1,0})$, $x_3, x_4 \in \alpha(C_{2,0})$, $x_1, x_3 \in \alpha(C_{3,0})$, and  $x_2, x_4  \in \alpha(C_{4,0})$ (see Figure \ref{fig:EckA}).
\smallskip

With these choices, we describe below the spaces of morphisms 
 of the exceptional collections $\sigma_{\widetilde{X}}$ \footnote{
 The dimension of the Hom spaces of $\sigma_{\widetilde{X}}$ are computed in Section \ref{ssec:morphismsOfCollections} for arbitrary vector bundles $A_0,A_1,A_2$. For $\sigma'_\sX$ they depend on our choices of $A_1,A_2,A_3$ and $\pi$.} and 
\begin{equation}
\sigma^\prime_{\mathcal{X}}= \left( \mathcal{E}^\prime_p, 
\mathcal{E}_{p_1}, \mathcal{E}_{p_2}, \mathcal{E}_{p_3}, \mathcal{E}_{p_4},  \Phi(\sigma_{\widetilde{X}}) \right) \end{equation}
where $\mathcal{E}^\prime_p= (e_{2}[-4k+1], \dots, e_{4k-1}[-2], e_{4k}[-1] )$.	

More precisely, in Lemma \ref{lem:v-bundle and L_i} and Lemma \ref{lem:D and Bi} we describe the spaces of morphisms of $ \sigma_{\widetilde{X}}$. Since $\Phi$ is a fully faithful functor, the same two lemmas also describe all morphisms between objects of $\sigma^\prime_{\mathcal{X}}$ of the form  $\Phi(E)$, where $E \in \sigma_{\widetilde{X}}$.  Lemma \ref{lem:ep} and Lemma \ref{lem:epi} complete the description of the spaces of morphisms of $\sigma^\prime_{\mathcal{X}}$. We omit the computations of the morphisms between objects in $\mathcal{E}^\prime_p$ and $\mathcal{E}_{p_i}$ since they follow directly from the McKay quivers in Example \ref{exa:1n11} and Example \ref{exa:12k+1}.
\smallskip

Our computations show that the collections $\sigma_{\widetilde{X}}$ and $\sigma^\prime_{\mathcal{X}}$ are not strong. However, they have non-trivial morphisms only in degree 0 and 1, which implies the existence of a tilting complex in $D(\widetilde{X})$ and $D(\sX)$ (see Section \ref{ssec:tilting}).

In fact, one can change the exceptional collection $\sigma_{\widetilde{X}}$ to another one with simpler spaces of morphisms. The  exceptional collection for $\widetilde{X}$ obtained by the left mutation of $\sigma_{\widetilde{X}}$ in the pair $(\alpha^\ast \sO (1), \sO_D)$ \footnote{More precisely, one left-mutates  $\mathcal{O}_D$ through  $\alpha^\ast \mathcal{O}(1), \mathcal{O}_{L_1}, \dots, \mathcal{O}_{L_7}$. However, the left mutation  satisfies $L_{\mathcal{O}_{L_i}}(\mathcal{O}_D)=\mathcal{O}_D$ since $\mathcal{O}_D$ and $\mathcal{O}_{L_i}$ are orthogonal.}, that is, 
\begin{equation}
L \sigma_{\widetilde{X}}= \left( \mathcal{O}_{\widetilde{X}}, \alpha^\ast\mathcal{T}(-1),M, \alpha^\ast\mathcal{O}(1) , \mathcal{O}_{L_1}, \dots, \mathcal{O}_{L_7}, \mathcal{B}_1, \mathcal{B}_2,\mathcal{B}_3, \mathcal{B}_4 \right)
\end{equation}
  has only non-trivial morphisms in degree $0$, except for the morphisms from 
 $\Ext^\ast(b_{i,j}, b_{i,l})=\C \oplus \C[-1]$ when $j <l$. Here,  $M$ denotes the left mutation $M=L_{\alpha^\ast \mathcal{O}(1)}\mathcal{O}_D$, defined by
the short exact sequence 
\begin{equation}
\label{eq:DefOfM}
0\to M\to \alpha^\ast \mathcal{O}(1) \xrightarrow{r} \mathcal{O}_D \to 0.
\end{equation}
Moreover,  the corresponding exceptional collection for $\mathcal{X}$
\begin{equation}
L\sigma^\prime_\mathcal{X}=\left( \mathcal{E}^\prime_p, 
\mathcal{E}_{p_1}, \mathcal{E}_{p_2}, \mathcal{E}_{p_3}, \mathcal{E}_{p_4},  \Phi(L\sigma_{\widetilde{X}}) \right)
\end{equation}
has only non-trivial morphism in degree $0$, except for 
the morphisms from $\Phi(b_{i,j})$ to $\Phi(b_{i,l})$ when $j<l$ (Lemma \ref{lem:M}), 
and the morphisms starting from $e_{i, j}$, which have degree $0$ or $1$ (Lemma \ref{lem:M2}).  

We collect the data of the dimensions of the $\Hom$-spaces between objects of $L\sigma_{\widetilde{X}}$ and $L\sigma^\prime_{\sX}$ in Table \ref{tab:MinRes}, Table \ref{tab:Ep} and Table \ref{tab:Epi}. 
In each table, the entry in position $(i,j)$ is either the integer $\dim \Hom(E_i,E_j)$, or a pair of integers $(\dim \Hom(E_i,E_j),\dim\Ext^1(E_i,E_j))$.

\begin{remark}
If $k=1$, the exceptional collection $L \sigma_{\widetilde{X}}$ is in fact strong. Moreover, by shifting by $-1$ any element of $\sE_{p_i}$, $i=1,2,3,4$, one obtains a full strong exceptional collection also for the stack $\sX$.  The dimensions of the $\Hom$-spaces of this collection are summarised in Table \ref{tab:k=1}. 

It may be interesting to find out whether, for all $k>0$ integer, one could obtain a full strong exceptional collection for $\mathcal{X}$ through a sequence of mutations and shifts of  $L\sigma^\prime_{\mathcal{X}}$.\end{remark}

\subsubsection{Basic facts} We recall some well-known facts about sheaves of pure dimension 1 on surfaces which we use repeatedly in the proofs of the lemmas. 
The first one is a Riemann-Roch computation. Here, by a curve on S we mean a connected effective 1-cycle.

\begin{lemma}
\label{lem:chi}
Let $S$ be a smooth projective surface, and let $V_1,V_2$ be sheaves of pure dimension one on $S$. For $i=1,2$, let $D_i$ be curves on $S$ such that $\ch{1}(V_i)=D_i \in H^*(S)$. Then 
\[ \chi(V_1,V_2)=\sum_{i\geq 0}(-1)^i\dim \Ext^i(V_1,V_2)=- D_1\cdot D_2. \]
\end{lemma}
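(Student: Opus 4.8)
The plan is to compute the Euler pairing $\chi(V_1,V_2)$ by Hirzebruch--Riemann--Roch and then to observe that the purity hypothesis collapses the formula to a single intersection number. First I would note that on the smooth projective surface $S$ the groups $\Ext^i(V_1,V_2)$ vanish for $i>2$, so the alternating sum is finite and genuinely equals the Euler characteristic $\chi(V_1,V_2)$. By HRR this is computed as
\[
\chi(V_1,V_2)=\int_S \ch(V_1)^\vee\cdot \ch(V_2)\cdot \td(S),
\]
where $\ch(V_i)^\vee=\sum_j(-1)^j\ch{j}(V_i)$ denotes the dual Chern character.

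Next I would record the Chern characters. Since $V_1$ and $V_2$ have pure dimension one, their ranks vanish, i.e.\ $\ch{0}(V_i)=\rk V_i=0$, and by hypothesis $\ch{1}(V_i)=D_i$. Hence $\ch(V_i)=D_i+\ch{2}(V_i)$ and, dualizing, $\ch(V_1)^\vee=-D_1+\ch{2}(V_1)$. The key point is that the vanishing of $\ch{0}$ removes the codimension-zero and codimension-one contributions from the product: multiplying out $\ch(V_1)^\vee\cdot \ch(V_2)$, the codimension-zero and codimension-one components vanish (they would involve the ranks), and the codimension-two component is exactly $-D_1\cdot D_2$.

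I would then extract the codimension-two part of the full integrand. Because $\ch(V_1)^\vee\cdot\ch(V_2)$ already starts in codimension two, and the Todd class $\td(S)=1+\tfrac12 c_1(S)+\tfrac{1}{12}(c_1(S)^2+c_2(S))$ contributes its codimension-zero term $1$, the codimension-two part of $\ch(V_1)^\vee\cdot\ch(V_2)\cdot\td(S)$ is simply $-D_1\cdot D_2$. Integrating over $S$ yields $\chi(V_1,V_2)=-D_1\cdot D_2$, as claimed.

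There is no substantial obstacle here; the only thing to be careful about is the bookkeeping of Chern-character degrees, and in particular verifying that the purity assumption $\rk V_i=0$ is precisely what annihilates every term that would otherwise involve $c_1(S)$ (through the Todd class) or the second Chern characters $\ch{2}(V_i)$. This is what makes the pairing depend only on the intersection number $D_1\cdot D_2$ and not on the finer invariants of the sheaves.
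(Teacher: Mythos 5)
Your proof is correct and follows the same route as the paper: the paper's argument is exactly "since the $V_i$ are torsion, $\ch{0}(V_i)=0$, then Hirzebruch--Riemann--Roch gives the statement," and your writeup just makes the degree bookkeeping explicit. Nothing to change.
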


\begin{proof}
Since $V_1$ and $V_2$ are torsion, their Chern characters satisfy $\ch{0}(V_i)=0$. Then the Hirzebruch--Riemann--Roch theorem shows the statement.
\end{proof}

The next Lemma follows directly from the definition of pure sheaf and simple homological algebra:

\begin{lemma} \label{lem:degree}
Let $V$, $W$ be sheaves of pure dimension one on $S$,  supported on curves $C$ and $D$, respectively. Then:
\begin{enumerate}[(i)]
\item if $C$ and $D$ have no common irreducible component, then $\Hom(V,W)=0$;
\item every map $V\to W$ factors through the restriction map $V\to V_{|C\cap D}$;
\item Let $D'=\overline{D\setminus (C\cap D)}$. Every map $V \to W$ factors through the kernel of the restriction map $W \to W_{|D'}$.
\end{enumerate}
\end{lemma}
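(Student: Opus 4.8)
The plan is to derive all three parts from one principle, namely the definition of purity: a nonzero subsheaf of a sheaf of pure dimension one has one-dimensional support. Throughout I would write $\mathcal{I}_C=\mathrm{Ann}(V)$ and $\mathcal{I}_D=\mathrm{Ann}(W)$ for the annihilator ideal sheaves, so that $V$ is an $\mathcal{O}_C$-module and $W$ an $\mathcal{O}_D$-module, with $C=\supp V$ and $D=\supp W$ carrying the induced scheme structures. For part (i), given $\phi\colon V\to W$, I would look at $\operatorname{im}\phi\subseteq W$: as a quotient of $V$ it is annihilated by $\mathcal{I}_C$, and as a subsheaf of $W$ it is annihilated by $\mathcal{I}_D$, so $\supp(\operatorname{im}\phi)\subseteq C\cap D$. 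If $C$ and $D$ share no irreducible component, then $C\cap D$ is zero-dimensional, so $\operatorname{im}\phi$ is a subsheaf of the pure sheaf $W$ with zero-dimensional support; purity of $W$ forces $\operatorname{im}\phi=0$, i.e. $\phi=0$.

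For part (ii) I would reuse the same observation that $\operatorname{im}\phi\subseteq W$ is annihilated by $\mathcal{I}_D$. Hence $\phi(\mathcal{I}_D\cdot V)=\mathcal{I}_D\cdot\operatorname{im}\phi=0$, so $\phi$ kills the subsheaf $\mathcal{I}_D V\subseteq V$. Since $\mathcal{I}_C$ already annihilates $V$, one has $V|_{C\cap D}=V\otimes\mathcal{O}_{C\cap D}=V/(\mathcal{I}_C+\mathcal{I}_D)V=V/\mathcal{I}_D V$, so $\mathcal{I}_D V=\Ker(V\to V|_{C\cap D})$. Therefore $\phi$ annihilates the kernel of the restriction $V\to V|_{C\cap D}$ and descends to a map $V|_{C\cap D}\to W$, which is exactly the asserted factorization.

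For part (iii) the key combinatorial observation is that every irreducible component of $D'=\overline{D\setminus(C\cap D)}$ is a component of $D$ that is not a component of $C$, so $C$ and $D'$ have no common irreducible component. I would then consider the composite $g\colon V\xrightarrow{\phi}W\xrightarrow{r}W|_{D'}$; the statement that $\phi$ factors through $\Ker r$ is precisely the vanishing $g=0$. Since $V$ is supported on $C$ and $W|_{D'}$ on $D'$, the image of $g$ is supported on the finite set $C\cap D'$, and I would conclude $g=0$ by invoking part (i) for the pair $(V,W|_{D'})$.

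The main obstacle is exactly this last invocation: part (i) requires the \emph{target} $W|_{D'}$ to be pure of dimension one, and this is not automatic for an arbitrary pure $W$ (tensoring by $\mathcal{O}_{D'}$ can introduce zero-dimensional torsion at the points of $C\cap D'$, for instance when $W$ locally splits there). I would handle this by noting that in all the situations where the lemma is applied $W$ is a line bundle on a reduced curve and the restriction is taken along a union of its components, so that $\mathrm{Tor}_1^{\mathcal{O}_S}(W,\mathcal{O}_{D'})$ introduces no zero-dimensional sections and $W|_{D'}$ is indeed pure; granting this, part (i) gives $g=0$ and hence $\operatorname{im}\phi\subseteq\Ker r$.
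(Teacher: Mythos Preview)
Your proofs of (i) and (ii) are correct and coincide with what the paper intends; the paper itself gives no details beyond the remark that the lemma ``follows directly from the definition of pure sheaf and simple homological algebra.''

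Your treatment of (iii) is also essentially right, and the obstacle you flag is genuine: part (iii) can fail without an extra hypothesis. For instance, take two lines $D_1, D_2 \subset \aff 2$ meeting at $p$, set $D = D_1 \cup D_2$, $C = D_1$, $V = \sO_{D_1}$, and $W = \mathfrak{m}_p \subset \sO_D$. Then $W$ is pure of dimension one, $D' = D_2$, but the map $\phi\colon V \to W$ sending $1$ to a local equation for $D_2$ has $r\circ\phi \neq 0$ (the image is the zero-dimensional torsion of $W_{|D_2}$). So (iii) as written is not a theorem in full generality; one needs $W_{|D'}$ to be pure, and then your invocation of (i) goes through.

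Your workaround is correct in spirit, though the word ``reduced'' is off: in the paper's applications $W$ is $\sO_D$ (or a twist) for $D$ an effective, typically \emph{non-reduced}, divisor on the smooth surface $\widetilde X$, and $D'$ is a sub-divisor. The relevant point is that $W_{|D'} \simeq \sO_{D'}$ is then the structure sheaf of an effective divisor on a smooth surface, hence Cohen--Macaulay of pure dimension one. With that correction your argument is complete and is exactly what the paper uses.
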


For future reference, we also 
list the intersection numbers of $K_{\widetilde X}$ with the curves appearing in Figure \ref{fig:surfaces}: 
\begin{equation} \label{eq:canonical-intersection}
\begin{split}
	F\cdot K_{\widetilde{X}}= 4k-1 \qquad 
	C_{i,0}\cdot K_{\widetilde{X}}= 1 \qquad 	C_{i,j}\cdot K_{\widetilde{X}}= 0 \qquad 
		C_{i,k}\cdot K_{\widetilde{X}}= -1 \qquad 
\end{split}
\end{equation}
for $i=1,...,4$ and $j=1,...,k-1$.

\subsubsection{Morphisms of the collections  $\sigma_{\widetilde{X}}$ and $ \sigma^\prime_{\mathcal{X}}$}\label{ssec:morphismsOfCollections}

\begin{lemma} \label{lem:v-bundle and L_i}
Let $V$ be a vector bundle on $\P^2$. Then 
$\Ext^{\ast}(\alpha^\ast V, E )= \CC^{\mathrm{rank}(V)}$,
where $E$ is any object of the collection $(\mathcal{O}_{L_1}, \dots, \mathcal{O}_{L_7},
  \mathcal{O}_{D}, \mathcal{B}_1, \mathcal{B}_2,\mathcal{B}_3, \mathcal{B}_4 )$. 
 Moreover, $\mathcal{O}_{L_i}$ is mutually orthogonal to $\mathcal{O}_{L_j}$ for all $i \neq j$, and to any object of the collection $(\mathcal{O}_{D}, \mathcal{B}_1, \mathcal{B}_2,\mathcal{B}_3, \mathcal{B}_4 )$.
\end{lemma}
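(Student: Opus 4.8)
The plan is to reduce every computation to the cohomology of a structure sheaf on $\widetilde{X}$, exploiting that $\alpha^\ast V$ is locally free and that $\alpha$ contracts the support of each object $E$ in the list to a single point of $\P^2$. Since $\alpha^\ast V$ is a vector bundle, $\mathcal{RHom}(\alpha^\ast V, E)\simeq \alpha^\ast(V^\vee)\otimes E$, so $\Ext^\ast(\alpha^\ast V, E)=H^\ast(\widetilde{X}, \alpha^\ast(V^\vee)\otimes E)$. First I would record where the relevant supports go: the curves $L_1,\dots,L_7$ are contracted by $\alpha$ to the points $x_i\in\P^2$ (they are strict transforms of the $\pi$-exceptional $(-1)$-curves $l_i$, and $\tau$ is an isomorphism near them by the choice of $\pi$), while the supports of $\mathcal{O}_D$ and of each $b_{i,l}$ lie in the fibre $\tau^{-1}(q)$, hence are contracted by $\alpha$ to $\pi(q)$. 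Because $\alpha^\ast(V^\vee)$ is trivial on a neighbourhood of any set contracted to a point, $\alpha^\ast(V^\vee)\otimes E\simeq E^{\oplus \mathrm{rank}(V)}$, and the first assertion becomes $\Ext^\ast(\alpha^\ast V, E)=H^\ast(\widetilde{X},E)^{\oplus\mathrm{rank}(V)}$.

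It then remains to prove $H^\ast(\widetilde{X},E)=\CC$, concentrated in degree $0$, for each such $E$. For $E=\mathcal{O}_{L_i}$ this is immediate since $L_i\cong\P^1$. For $E=b_{i,l}=\mathcal{O}_{C_{i,l}+\cdots+C_{i,k}}$ the support is a connected chain of smooth rational curves, so the reduced curve $C_{i,l}+\cdots+C_{i,k}$ has arithmetic genus $0$ and again $H^0=\CC$, $H^1=0$; I would obtain this by a short filtration argument along the chain, or by the same rationality input used below.

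The main obstacle is $E=\mathcal{O}_D$, because $D=F+\sum_{i,j}jC_{ij}$ is \emph{non-reduced}, so one cannot simply appeal to a tree of $\P^1$'s. Here I would separate the vanishing of $H^1$ from an Euler-characteristic count. For the vanishing, $\mathrm{supp}(D)=\tau^{-1}(q)$ where $q$ is a smooth, hence rational, point of $X'$ by Theorem \ref{thm:min-resol}; since $\tau$ is a composition of blow-ups of smooth surfaces, $R^1\tau_\ast\sO_{\widetilde X}=0$, and Artin's characterisation of rationality gives $H^1(\sO_Z)=0$ for every effective cycle $Z$ supported on $\tau^{-1}(q)$, in particular $H^1(\mathcal{O}_D)=0$. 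For the count, Riemann--Roch gives $\chi(\mathcal{O}_D)=-\tfrac12\,D\cdot(D+K_{\widetilde X})$, and using the self-intersections $F^2=-(4k+1)$, $C_{i,k}^2=-1$, $C_{i,j}^2=-2$ for $1\le j<k$, the adjacencies $F\cdot C_{i,k}=C_{i,j}\cdot C_{i,j+1}=1$, and the intersection numbers with $K_{\widetilde X}$ in \eqref{eq:canonical-intersection}, a direct calculation yields $D^2=D\cdot K_{\widetilde X}=-1$, whence $\chi(\mathcal{O}_D)=1$. Combined with $H^1=0$ this forces $H^0(\mathcal{O}_D)=\CC$. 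I would stress that it is precisely the weights $j$ in the definition of $D$ that produce $\chi(\mathcal{O}_D)=1$ (a generic thickening of the same fibre would give a larger $H^0$), so this is a genuine computation rather than a formality.

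Finally, for the orthogonality statements I would argue by disjointness of supports. The $L_i$ are pairwise disjoint, being isomorphic to the disjoint $(-1)$-curves $l_i$, and each $L_i$ is disjoint from $\tau^{-1}(q)=\mathrm{supp}(\mathcal{O}_D)=\mathrm{supp}(b_{j,l})$ because $l_i$ avoids $q$ by the choice of $\pi$ (Lemma \ref{lem:clique}(i)). Disjoint supports give $\mathcal{RHom}(\mathcal{O}_{L_i},-)=0$ and $\mathcal{RHom}(-,\mathcal{O}_{L_i})=0$, so $\mathcal{O}_{L_i}$ is mutually orthogonal to the $\mathcal{O}_{L_j}$ with $j\neq i$ and to every object of $(\mathcal{O}_D,\mathcal{B}_1,\dots,\mathcal{B}_4)$, as claimed.
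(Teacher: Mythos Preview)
Your proof is correct and essentially equivalent to the paper's: the paper invokes adjunction to write $\Ext^*(\alpha^\ast V, E)=\Ext^*(V,\dR\alpha_\ast E)$ and asserts $\dR\alpha_\ast E=\sO_{\mathrm{pt}}$, which is the same as your claim $H^*(\widetilde X,E)=\CC$ after unwinding via local triviality of $\alpha^\ast V^\vee$. Your Riemann--Roch verification that $\chi(\sO_D)=1$ (together with Artin vanishing) actually supplies the justification for the case $E=\sO_D$ that the paper leaves implicit.
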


\begin{proof}
To prove the first part of the lemma note that, by adjunction, \[\Ext^{\ast}(\alpha^\ast V, E )=\Ext^{\ast}(V,\dR \alpha_\ast E )\] and $\dR \alpha_\ast E$ is the structure sheaf of a point.
The second part of the lemma follows from the fact that the objects involved have disjoint supports.
\end{proof}

\begin{lemma} \label{lem:D and Bi} 
Let $i=1,2,3,4$. Then:
\begin{enumerate}[1.]
\item for $j=1, \dots, k$, $\Ext^{\ast}(\mathcal{O}_D, b_{i,j})= \CC \oplus \CC[-1]$;
\item $\mathcal{B}_i$ and $\mathcal{B}_j$, $j \neq i$, are mutually orthogonal;
\item for $1 \leq j<l \leq k$, $\Ext^{\ast}( b_{i,j} , b_{i, l})= \CC \oplus \CC[-1]$	.
\end{enumerate}
\end{lemma}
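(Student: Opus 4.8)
The plan is to handle the three statements by a single mechanism: compute Euler characteristics via Lemma~\ref{lem:chi}, and then determine $\Hom$ and $\Ext^2$ separately, obtaining $\Ext^1$ by subtraction. Part~2 is the easy case and I would dispose of it first. For $i\neq j$ the chains $C_{i,\bullet}$ and $C_{j,\bullet}$ are disjoint: the strict transforms $C_{i,k}, C_{j,k}$ of the lines $c_i,c_j$ are separated once their common point $p$ is blown up, and the remaining curves lie over the distinct points $p_i,p_j$. Hence $b_{i,l}$ and $b_{j,m}$ have disjoint supports, all sheaves $\mathcal{E}xt^q(b_{i,l},b_{j,m})$ vanish, and therefore $\Ext^\ast(b_{i,l},b_{j,m})=0=\Ext^\ast(b_{j,m},b_{i,l})$, which is the asserted mutual orthogonality.

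For Parts~1 and~3 I would first record that the relevant Euler characteristics vanish. Writing $Z_l=\sum_{m=l}^k C_{i,m}$, so that $b_{i,l}=\sO_{Z_l}$ with $\ch{1}(b_{i,l})=Z_l$ and $\ch{1}(\sO_D)=D$, a direct intersection computation using the chain relations $C_{i,m}\cdot C_{i,m\pm1}=1$, the self-intersections $(-3,-2,\dots,-2,-1)$ along the chain, and $F\cdot C_{i,k}=1$ gives $D\cdot Z_l=0$ and $Z_j\cdot Z_l=0$. By Lemma~\ref{lem:chi} this yields $\chi(\sO_D,b_{i,j})=0$ and $\chi(b_{i,j},b_{i,l})=0$, so in each case it suffices to prove $\Hom=\C$ and $\Ext^2=0$, whence $\Ext^1=\C$.

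The $\Hom$ groups I would compute from the sheaf-Hom description (consistent with Lemma~\ref{lem:degree}). In both cases the source is $\sO_W$ and the target is $\sO_{W'}$ with $W'\leq W$ an effective subdivisor of the same chain, so $\mathcal{I}_W\subseteq\mathcal{I}_{W'}$ annihilates $\sO_{W'}$ and $\HHom(\sO_W,\sO_{W'})=\sO_{W'}$; since $W'$ is a connected reduced chain of rational curves, $H^0(\sO_{W'})=\C$, which is the single restriction morphism. For the $\Ext^2$-vanishing I would pass to Serre duality, $\Ext^2(\sO_W,\sO_{W'})\cong\Hom(\sO_{W'},\sO_W\otimes K_{\widetilde{X}})^\vee$, and use the colon-ideal identity $\HHom(\sO_{W'},\sO_W\otimes K_{\widetilde{X}})=\sO_{W'}(-(W-W'))\otimes K_{\widetilde{X}}$. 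It then remains to show that the line bundle $L=(\sO(-(W-W'))\otimes K_{\widetilde{X}})|_{W'}$ has no global sections. A degree count on each component of the chain $W'$, using the intersection numbers found above together with the canonical intersection numbers \eqref{eq:canonical-intersection}, shows that $L$ has degree $-1$ on the two end components and degree $0$ in between (degree $-2$ when $W'$ is a single curve). A section must vanish on the negative end component, hence at the adjacent node; on the next, degree-$0$ component it is then a constant vanishing at a point, so it is $0$, and this vanishing cascades along the chain, forcing $H^0(L)=0$.

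The main obstacle is the $\Ext^2$ computation in Part~1, where $\sO_D$ is the structure sheaf of the \emph{nonreduced} divisor $D=F+\sum_{i,j}jC_{i,j}$: one has to verify that twisting by $K_{\widetilde{X}}$ and subtracting $D-Z_j$ leaves a bundle that is just negative enough at both ends of the chain to kill all sections. This is precisely where the exact values in \eqref{eq:canonical-intersection} matter, in particular $C_{i,k}\cdot K_{\widetilde{X}}=-1$ and $C_{i,m}\cdot K_{\widetilde{X}}=0$ for $0<m<k$; the analogous but lighter bookkeeping with $W=Z_j$, $W'=Z_l$ gives Part~3.
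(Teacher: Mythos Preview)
Your proposal is correct and follows the same three-step architecture as the paper's proof: compute $\chi$ via Lemma~\ref{lem:chi}, identify $\Hom$ as the single restriction map, and kill $\Ext^2$ by Serre duality. The only real difference is in the $\Ext^2$ step: the paper disposes of it by invoking Lemma~\ref{lem:degree}~(iii) directly (after dualizing, any map $\sO_{W'}(\mathrm{twist})\to\sO_W$ must factor through the kernel of the restriction of $\sO_W$ to $\overline{|W|\setminus W'}$, which is too negative to receive a nonzero map), whereas you compute the colon-ideal $\HHom(\sO_{W'},\sO_W\otimes K_{\widetilde X})=\sO_{W'}(-(W-W'))\otimes K_{\widetilde X}$ explicitly and then run a degree-cascade along the chain. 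Both arguments encode the same obstruction; yours is more explicit, and since the paper only writes out the single case $j=k$ of Part~1, your uniform treatment of all $j$ is a useful complement. One cosmetic point: in Part~1 the phrase ``subdivisor of the same chain'' is slightly imprecise, since $D$ is a tree with four branches meeting at $F$ rather than a chain, but your argument only uses $W'\leq W$ as effective divisors, which is what holds.
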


\begin{proof}
To prove (1) and (3) use Lemma \ref{lem:chi} 
 and Lemma \ref{lem:degree}. 
We show for example the proof of (1) for $j=k$. It follows from Lemma \ref{lem:degree} (ii) that there is only one map $\sO_D \to b_{i,k}$. Moreover, by Lemma \ref{lem:chi}  \[\chi(\mathcal{O}_D, b_{i,k})=-D\cdot C_{i,k}=-(F+(k-1)\cdot C_{i,k-1}+k\cdot C_{i,k}) \cdot C_{i,k}=0\] On the other hand, $\Ext^2(\mathcal{O}_D, b_{i,k})=\Ext^2(\mathcal{O}_D, \sO_{C_{i,k}})\simeq \Hom(\sO_{C_{i,k}}(1), \sO_D)^*$  by Serre duality and \eqref{eq:canonical-intersection}, and the latter is trivial by Lemma \ref{lem:degree} (iii).  
Assertion (2) is immediate since $\mathcal{B}_i$ and $\mathcal{B}_j$ have disjoint supports.
\end{proof}

To carry out the rest of the computations, we will combine our result on the left adjoint functor $\Psi$ to $\Phi$ with the configuration of curves described in Theorem \ref{thm:min-resol}.  In particular, the local description from Examples \ref{ex:1/n11ImagesOfPsi} and \ref{ex:1/(2k+1)1kImagesOfPsi} gives

\begin{align}\label{eq:Psi(ei)}
\Psi(e_{i}) \simeq \begin{cases}
0 & \mbox{if } i \in {0, \dots, 4k-2}\\
\sO_{F}(-4k-1)[1] & \mbox{ if } i=4k-1 \\
\sO_{F}(-4k)  & \mbox{ if } i=4k
\end{cases}
\end{align}
and, writing $C_i=\sum_{j=0}^{k-1} C_{i,j}$,
 
\begin{align} \label{eq:psieij}
\Psi(e_{ij}) \simeq \begin{cases}
0 & \mbox{if } j \in {0, \dots, k-1}\\
\sO_{C_i}(C_i)[1] & \mbox{ if } j=k \\
\sO_{C_{i,2k-j}}(-1)  & \mbox{ if } j \in \{k+1, \dots, 2k-1\} \\
 \sO_{C_{i,0}}(-2) &  \mbox{ if } j=2k
\end{cases}
\end{align}

\begin{lemma}  \label{lem:ep} We have that:
\begin{enumerate}[1.]
\item for $2\leq l< 4k-1$ , $\Ext^*(e_l,\Phi(E))=0$ for all $E\in D^b(\Coh(\widetilde{X}))$;
\item for $V$ a vector bundle on $\P^2$, $\Ext^{\ast}(e_{4k-1},\Phi(\alpha^\ast(V)))=\CC^{\mathrm{rank(V)}}[-2]$;
\item   $\Ext^{\ast}(e_{4k-1},\Phi(\mathcal{O}_D))=\CC^2[-1]\oplus \CC[-2]$;
\item  for $i=1,2,3,4$, $j=1,\dots, k$, $\Ext^{\ast}(e_{4k-1},\Phi(b_{i,j}))=\CC[-1]$;
\item  for $V$ a vector bundle on $\P^2$, $\Ext^{\ast}(e_{4k},\Phi(\alpha^\ast(V)))=0$;
\item  $\Ext^{\ast}(e_{4k},\Phi(\mathcal{O}_D))=\CC$;
\item for $=1,2,3,4$, $j=1,\dots, k$, $\Ext^{\ast}(e_{4k},\Phi(b_{i,j}))=\CC[-1]$;
\item $\mathcal{E}_p$ 
and $( \Phi(\mathcal{O}_{L_1}), \dots,\Phi(\mathcal{O}_{L_7}) )$ are mutually orthogonal.
\end{enumerate}
\end{lemma}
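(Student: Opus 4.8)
The plan is to push every group down to the smooth surface $\widetilde X$ via the adjunction $\Psi \dashv \Phi$, and then to substitute the explicit description of $\Psi(e_l)$ recorded in \eqref{eq:Psi(ei)}. Since $\Psi$ is left adjoint to $\Phi$, for every $E \in D^b(\Coh(\widetilde X))$ one has
\[
\Ext^*(e_l, \Phi(E)) \cong \Ext^*_{\widetilde X}(\Psi(e_l), E).
\]
For $2 \le l < 4k-1$, formula \eqref{eq:Psi(ei)} gives $\Psi(e_l) = 0$, which immediately yields assertion (1), and, together with the support argument below, the part of (8) concerning these indices. It then remains to treat $l = 4k-1$ and $l = 4k$, where $\Psi(e_{4k-1}) = \sO_F(-4k-1)[1]$ and $\Psi(e_{4k}) = \sO_F(-4k)$; thus all the remaining groups are of the form $\Ext^*_{\widetilde X}(\sO_F(d), E)$ with $d \in \{-4k-1,-4k\}$, up to the shift $[1]$ when $l = 4k-1$.

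The computation of these groups rests on one local-duality identity together with the geometry of $F$. Since $F \cong \P^1$ is a $(-(4k+1))$-curve with normal bundle $N_{F/\widetilde X} = \sO_F(F) = \sO_F(-4k-1)$, Grothendieck--Verdier duality gives
\[
\dR\HHom_{\widetilde X}(\sO_F(d), \sO_{\widetilde X}) \cong i_*\sO_F(-d-4k-1)[-1], \qquad i\colon F \hookrightarrow \widetilde X.
\]
Tensoring by the relevant line bundle and using the tautological sequence $0 \to \sO_{\widetilde X}(-C) \to \sO_{\widetilde X} \to \sO_C \to 0$ of a curve $C$, every group $\Ext^*_{\widetilde X}(\sO_F(d), \sO_C)$ is expressed through $H^*(\P^1, \sO_{\P^1}(m))$ for an explicit $m$ determined by the intersection numbers $F^2 = -4k-1$, $F \cdot C_{i,k} = 1$ (so $F \cdot \supp(b_{i,j}) = 1$), $F \cdot D = -1$, and $F \cdot L_i = 0$, together with $K_{\widetilde X}\cdot F = 4k-1$ from \eqref{eq:canonical-intersection}. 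For $E = \alpha^*V$ one uses instead that $\alpha$ factors through $\tau$, which contracts $F$ to the point $q$ (Theorem \ref{thm:min-resol}), so $(\alpha^*V)|_F \cong \sO_F^{\oplus \rk V}$ and the same identity applies; this produces (2) and (5). The sequence above with $C = \supp(b_{i,j})$ reduces (4) and (7) to the degree-one normal direction, giving a one-dimensional group in each case, while for $E = \sO_{L_i}$ the disjointness $F \cap L_i = \varnothing$ (which holds because $\pi$ contracts no $c_{i,0}$, so $L_i$ avoids the $\tau$-exceptional locus containing $F$) forces vanishing; the opposite orthogonality $\Ext^*(\Phi(\sO_{L_i}), e_l) = 0$ in (8) is free from the semiorthogonal decomposition \eqref{eq:sod}. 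In every case the Euler characteristic from Lemma \ref{lem:chi} and the vanishing of the extreme $\Hom$/$\Ext^2$ groups via Lemma \ref{lem:degree} and Serre duality provide an independent check.

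I expect the case $E = \sO_D$ in (3) and (6) to be the main obstacle, precisely because $F$ is a component of the non-reduced divisor $D = F + \sum_{i,j} j\,C_{i,j}$, so that the naive restriction along $F$ is much larger than the answer. The clean input is that the defining section of $D$ restricts to zero on $F$: applying $\dR\HHom(\sO_F(d), -)$ to $0 \to \sO_{\widetilde X}(-D) \to \sO_{\widetilde X} \to \sO_D \to 0$, the comparison map is multiplication by $s_D|_F = 0$ and therefore vanishes, so the long exact sequence splits into
\[
\Ext^n_{\widetilde X}(\sO_F(d), \sO_D) \cong \Ext^n_{\widetilde X}(\sO_F(d), \sO_{\widetilde X}) \oplus \Ext^{n+1}_{\widetilde X}(\sO_F(d), \sO_{\widetilde X}(-D)).
\]
Since $\sO_{\widetilde X}(-D)|_F$ has degree $-D\cdot F = 1$, the summand $H^0(\P^1, \sO_{\P^1}(1)) = \C^2$ is exactly what produces the two-dimensional contribution for $l = 4k-1$ and the one-dimensional contribution for $l = 4k$; finally, carrying the shift $[1]$ in $\Psi(e_{4k-1})$ through places the groups in the asserted cohomological degrees. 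The only genuinely delicate bookkeeping is this interaction between the thickening of $D$ along $F$ and the shift, which is why I would carry out (3) last and use (6) as a consistency check.
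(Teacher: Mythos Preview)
Your proof is correct and follows the same overall strategy as the paper: reduce via the adjunction $\Psi\dashv\Phi$ and substitute the explicit values of $\Psi(e_l)$ from \eqref{eq:Psi(ei)}. The paper's proof is likewise a sketch, illustrating only part~(3), and it relies on the toolkit of the preceding lemmas (Lemma~\ref{lem:degree}, Serre duality, and the Riemann--Roch computation of Lemma~\ref{lem:chi}) rather than on your Grothendieck--Verdier identity $\dR\HHom_{\widetilde X}(\sO_F(d),\sO_{\widetilde X})\simeq i_*\sO_F(-d-4k-1)[-1]$.

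The one place the two arguments genuinely diverge is the case $E=\sO_D$. The paper computes $\Hom_{\widetilde X}(\sO_F(-4k-1),\sO_D)$ directly via Lemma~\ref{lem:degree}(iii): the kernel of the restriction $\sO_D\to\sO_{D'}$ (with $D'=D-F$) is identified as $\sO_F(-4k)$, whence $\Hom=\C^2$; Serre duality then kills $\Ext^2$, and $\chi=-F\cdot D=1$ forces $\Ext^1=\C$. Your route---applying $\dR\HHom(\sO_F(d),-)$ to the defining sequence of $\sO_D$ and observing that $s_D|_F=0$ splits the resulting long exact sequence---reaches the same numbers more uniformly, at the cost of invoking the local duality identity for $i\colon F\hookrightarrow\widetilde X$. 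Both approaches are valid: yours avoids the ad-hoc identification of the maximal $F$-supported subsheaf of $\sO_D$, while the paper's stays closer to the elementary Lemmas~\ref{lem:chi}--\ref{lem:degree} used throughout Section~\ref{ssec:HomSpaces} (which you rightly mention as an independent check).
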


\begin{proof}
In all the statements above, we use that $\Psi$ is left adjoint to $\Phi$, and conclude with techniques similar to the previous lemmas. We show (3) as an illustration: 
\[ \Ext^{\ast}(e_{4k-1},\Phi(\mathcal{O}_D))= \Ext^{\ast}_{\widetilde{X}}(\sO_F(-4k-1)[1],\mathcal{O}_D)  \]
by \eqref{eq:Psi(ei)}. The restriction of $\sO_D$ to $\overline{D\setminus F}$ has kernel $\sO_F(-4k)$, so we have
\[ \Hom_{\widetilde{X}}(\sO_F(-4k-1),\mathcal{O}_D)=\Hom_{\widetilde{X}}(\sO_F(-4k-1),\mathcal{O}_F(-4k))=\C^2 \]
by Lemma \ref{lem:degree} (iii). On the other hand,  
\[\Ext^2_{\widetilde{X}}(\sO_F(-4k-1),\mathcal{O}_D)\simeq \Hom(\sO_D, \sO_F(-2))^*=\Hom(\sO_F,\sO_F(-2))^*=0\]
by Serre duality, \eqref{eq:canonical-intersection}, and Lemma \ref{lem:degree} (ii). Since $\chi(\sO_F(-4k-1),\sO_D)=-F\cdot D = 1$ (using Lemma \ref{lem:chi}), we show (3).

\end{proof}

\begin{lemma} \label{lem:epi} Let $i=1,2,3,4$. Then:
\begin{enumerate}[1.]
\item for $V$ a vector bundle on $\P^2$, $\Ext^{\ast}(e_{i,j},\Phi(\alpha^\ast(V)))=0$ for all $j \neq 2k$;
\item $e_{i,j}$ and $\Phi(\mathcal{O}_{L_h})$ are mutually orthogonal for all $j \neq 2k$;
\item $\Ext^{\ast}(e_{i,2k}, \Phi(\mathcal{O}_{\widetilde{X}}))=0$; 
 \item $\Ext^{\ast}(e_{i,2k}, \Phi( \alpha^\ast( \mathcal{T}(-1) ) ) )=\Ext^{\ast}(e_{i,2k}, \Phi( \alpha^\ast(\mathcal{O}(1))  ))=\begin{cases}
\CC[-1] & \mbox{ if  \ } i=1,2\\
\CC^2[-1] & \mbox{ if  \ } i=3,4\\
\end{cases}
$

\item $\Ext^{\ast}(e_{i,2k},\Phi(O_{L_h}))=\CC^{a_{i,j}}[-1]$, where $a_{i,j}$ is the entry  $(i,j)$ of the matrix:
\[
A=\begin{pmatrix}
1 & 1 & 0 & 0 & 0 & 0 & 0  \\
0 & 0 & 1 & 1 & 0 & 0 & 0  \\
1 & 0 & 1 & 0 & 1 & 1 & 1  \\
0 & 1 & 0 & 1 & 1 & 1 & 1 
\end{pmatrix}
\]
\item   $\Ext^{\ast}(e_{i,j},\Phi(\mathcal{O}_D))=\begin{cases}
0 & \mbox{ if } j <2k\\
\CC[-1] & \mbox{ if } j=2k\\
\end{cases}
$
\item  $\Ext^{\ast}(e_{i,j},\Phi(b_{i, l}))=\begin{cases}
\CC & \mbox{ if } l=2k-j\\
\CC[-1]  & \mbox{ if } l =2k-j+1\\
0 & \mbox{ otherwise }
\end{cases}$

\item  $\mathcal{E}_{i}$ and $\Phi(\mathcal{B}_{j})$, $j \neq i$, are mutually orthogonal. 
\end{enumerate}
\end{lemma}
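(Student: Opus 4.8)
The plan is to transport every group to the smooth surface $\widetilde X$ by adjunction and then read the answer off the explicit description of $\Psi(e_{i,j})$. Since $\Psi$ is left adjoint to $\Phi$, for every object $E$ appearing in $\sigma_{\widetilde X}$ we have $\Ext^\ast(e_{i,j},\Phi(E))\simeq\Ext^\ast_{\widetilde X}(\Psi(e_{i,j}),E)$; moreover, as $\Phi(\sigma_{\widetilde X})$ is the right-hand factor of the semiorthogonal decomposition while the $\mathcal E_{p_i}$ lie to the left, the opposite Hom-spaces vanish automatically, so in (2) and (8) only this group must be shown to vanish. By \eqref{eq:psieij} all eight assertions thus become Ext-computations on $\widetilde X$ between explicit sheaves on the curves of Figure \ref{fig:surfaces}. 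The governing dichotomy is the position of the support: in the range $k+1\le j\le 2k$ one has $\Psi(e_{i,j})=\sO_{C_{i,2k-j}}(-1)$ on one of the $(-2)$-curves $C_{i,1},\dots,C_{i,k-1}$, which lie over $q$ and are therefore contracted by $\alpha$ and disjoint from every $L_h$, whereas for $j=2k$ one has $\Psi(e_{i,2k})=\sO_{C_{i,0}}(-2)$, supported on the $(-3)$-curve $C_{i,0}$ that $\alpha$ maps isomorphically onto the line or conic $\alpha(C_{i,0})\subset\P^2$.

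For the vector-bundle targets (1), (3) and (4) I would use the local duality isomorphism $\dR\HHom(\sO_C(d),\sO_{\widetilde X})\simeq\sO_C(C)(-d)[-1]$ for a smooth rational $C\subset\widetilde X$ (where $\sO_C(C)$ has degree $C^2$), which gives $\Ext^n_{\widetilde X}(\sO_C(d),\alpha^\ast V)\simeq H^{n-1}\bigl(C,\sO_C(C^2-d)\otimes(\alpha^\ast V)|_C\bigr)$. If $C=C_{i,2k-j}$ is contracted then $(\alpha^\ast V)|_C$ is trivial and the twist is $\sO_{\P^1}(-1)$, which is acyclic: this is (1). If $C=C_{i,0}$ the twist is again $\sO_{\P^1}(-1)$, but now $(\alpha^\ast V)|_{C_{i,0}}=(\alpha|_{C_{i,0}})^\ast\bigl(V|_{\alpha(C_{i,0})}\bigr)$; for $V=\sO_{\P^2}$ this is acyclic, giving (3), whereas for $V=\mathcal T(-1)$ or $\sO(1)$ a short Euler-sequence computation of the splitting type on the line ($i=1,2$) or conic ($i=3,4$) produces the $\CC[-1]$, respectively $\CC^2[-1]$, of (4).

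For the torsion targets (2), (5), (6), (7) and (8) I would combine Lemma \ref{lem:chi}, which gives $\chi(\Psi(e_{i,j}),E)=-[\,\supp\Psi(e_{i,j})\,]\cdot\ch{1}(E)$, with Lemma \ref{lem:degree} and Serre duality. All intersection numbers come from the chain of Figure \ref{fig:surfaces} and \eqref{eq:canonical-intersection}: for a $(-2)$-curve $C_{i,m}$ one computes $D\cdot C_{i,m}=0$ and $C_{i,m}\cdot(C_{i,l}+\dots+C_{i,k})$ equal to $-1,+1,0$ according as $l=m$, $l=m+1$, or otherwise, which yields the three cases of (7) and the $j<2k$ vanishing of (6); disjointness of $C_{i,m}$ from the $L_h$ and of distinct chains $C_i,C_j$ gives (2) and (8). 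For $j=2k$ the curve $C_{i,0}$ shares no component with $D$, with $b_{i,1}$, or with any $L_h$, so Lemma \ref{lem:degree}(i) gives $\Hom=0$, Serre duality with $K_{\widetilde X}\cdot C_{i,0}=1$ gives $\Ext^2=0$, and $\chi=-C_{i,0}\cdot\ch{1}(E)$ then fixes $\Ext^1$; the numbers $C_{i,0}\cdot L_h=[\,x_h\in\alpha(C_{i,0})\,]$ are precisely the incidences of the two-line/two-conic configuration of Example \ref{ex:EckardtPoints}(a), that is, the matrix $A$ of (5).

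I expect the genuine difficulty to be the cases in which $\supp\Psi(e_{i,j})$ and $\supp E$ share an irreducible component — concretely $l=m$ (and $l<m$) in (7) and the $j<2k$ cases of (6), where $C_{i,m}$ is itself a component of the target. There the Euler characteristic does not by itself separate $\Hom$, $\Ext^1$ and $\Ext^2$, and $\Hom$ need not vanish a priori; I would factor every morphism through the restriction to the scheme-theoretic intersection and through the kernel of the relevant restriction map via Lemma \ref{lem:degree}(ii)--(iii), and then use the negative degree of the pertinent line bundle on $\P^1$, together with Serre duality and \eqref{eq:canonical-intersection}, to annihilate the higher $\Ext$-groups and leave exactly the stated $\Hom$ or $\Ext^1$.
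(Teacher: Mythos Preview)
Your proposal is correct and follows essentially the same route as the paper: transport via the adjunction $\Psi\dashv\Phi$, invoke \eqref{eq:psieij}, and compute on $\widetilde X$ with Lemmas \ref{lem:chi} and \ref{lem:degree}, Serre duality, and \eqref{eq:canonical-intersection}; the paper only spells out case (4), using global Serre duality and push-forward along $\alpha$ where you use local duality and restriction, but both reduce to the same $\P^1$-cohomology via the Euler sequence. Your flagging of the shared-component cases in (6) and (7) as the genuine work, to be handled by Lemma \ref{lem:degree}(ii)--(iii) together with Serre duality, is exactly the template the paper sets in the proof of Lemma \ref{lem:D and Bi}.
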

\begin{proof} 
Again, in all the statements above, we use that $\Psi$ is left adjoint to $\Phi$ and we conclude using ideas similar to those of the previous lemmas.
As an example, we show (4). 

By \eqref{eq:psieij}, Serre duality, \eqref{eq:canonical-intersection}, and adjunction,
\[\begin{split} 
\Ext^\ast(e_{i,2k}, \Phi( \alpha^\ast( \mathcal{T}(-1) ) ))&=\Ext^\ast(\sO_{C_{i,0}}(-2), \alpha^\ast(\mathcal{T}(-1)) )\\
&=\Ext^{2-\ast}(\alpha^\ast(\mathcal{T}(-1)), \sO_{C_{i,0}}(-1))^*\\
&= \Ext^{2-\ast}(\mathcal{T}(-1), \sO_{\alpha(C_{i,0})}(-1))^*
\end{split}\] Now, by the long exact sequence for the Hom functor applied to the Euler sequence, one finds that 
\[\Ext^{\ast}(\mathcal{T}(-1), \sO_{\alpha(C_{i,0})}(-1))= \Ext^{\ast-1}(\mathcal{O}(-1), \sO_{\alpha(C_{i,0})}(-1))\] since  $\mathrm{H}^\ast(\P^2,\sO_{\alpha(C_{i,0})}(-1) )=0$ because $C_{i,0}$ is rational. One concludes by observing that
\[
\Ext^{\ast}(\mathcal{O}(-1), \sO_{\alpha(C_{i,0})}(-1)) = 
\begin{cases}
\Ext^{\ast}(\mathcal{O}, \alpha(C_{i,0}))=\C & \mbox{ if } \alpha(C_{i,0}) \mbox{ is a line,}\\
\Ext^{\ast}(\mathcal{O}, \alpha(C_{i,0})(1))=\C^2 & \mbox{ if } \alpha(C_{i,0}) \mbox{ is a conic.}
\end{cases}
\]
Analogously, 
\[ \Ext^\ast(e_{i,2k}, \Phi(\alpha^\ast(\mathcal{O}(1))))=\Ext^\ast(\sO_{C_{i,0}}(-2), \alpha^\ast(\mathcal{O}(1)))= \Ext^{2-\ast}(\mathcal{O}(1), \sO_{\alpha(C_{i,0})}(-1))^*
\] Hence, 
$\Ext^\ast(e_{i,2k}, \Phi(\alpha^\ast(\mathcal{O}(1))))$ is $\CC[-1]$ if $\alpha(C_{i,0})$ is a line, $\CC^2[-1]$  if $\alpha(C_{i,0})$ is a conic.
\end{proof}

\subsubsection{Morphisms of the mutated collections}

\begin{lemma} \label{lem:M} Let $M$ be as in \eqref{eq:DefOfM}. Then:
\begin{enumerate}[1.]
\item $\Ext^{\ast}(\mathcal{O}_{\widetilde{X}},M)=\CC^2$
\item   $\Ext^{\ast}(\alpha^\ast(\mathcal{T}(-1)),M)=\CC$

\item $\Ext^{\ast}(M ,\alpha^\ast(\mathcal{O}(1)))= \CC$
\item for $i=1, \dots, 7$, $\Ext^{\ast}(M ,\mathcal{O}_{L_i})= \CC$
\item for $i=1,2,3,4$, $j=1, \dots, k$, $\Ext^{\ast}(M, b_{i,j})= \CC$
\end{enumerate}
	
\end{lemma}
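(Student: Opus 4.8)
The plan is to first pin down the mutated object $M$ explicitly as a line bundle, after which all five computations become bookkeeping. Since the composite $\alpha\colon \widetilde X\xrightarrow{\tau}X'\xrightarrow{\pi}\pr 2$ contracts every component of $D$ to the single point $x_0\coloneqq\pi(q)\in\pr 2$, we have $\alpha^\ast\mathcal{O}(1)|_D\cong\mathcal{O}_D$. By Lemma \ref{lem:v-bundle and L_i} the space $\Ext^\ast(\alpha^\ast\mathcal{O}(1),\mathcal{O}_D)$ is one-dimensional and concentrated in degree $0$, so the map $r$ in \eqref{eq:DefOfM} is, up to scalar, the restriction $\alpha^\ast\mathcal{O}(1)\twoheadrightarrow\alpha^\ast\mathcal{O}(1)|_D\cong\mathcal{O}_D$. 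Tensoring the ideal sequence of $D$ with $\alpha^\ast\mathcal{O}(1)$ then identifies
\[
M=\ker r\cong\alpha^\ast\mathcal{O}(1)(-D).
\]
Alongside this I would record, from \eqref{eq:canonical-intersection} and the configuration of Figure \ref{fig:surfaces}, the intersection numbers $D\cdot K_{\widetilde X}=-1$, $D^2=-1$, $D\cdot F=-1$, and $D\cdot C_{i,j}=0$ for $1\le j\le k$; in particular $\chi(\mathcal{O}_D)=-\tfrac12 D\cdot(D+K_{\widetilde X})=1$, and since $\supp D$ is a connected tree of rational curves $H^\ast(\mathcal{O}_D)=\mathbb{C}$.

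For (1) and (2) I would work directly from the defining sequence $0\to M\to\alpha^\ast\mathcal{O}(1)\to\mathcal{O}_D\to0$ together with adjunction to $\pr2$. For (1), $\Ext^\ast(\mathcal{O}_{\widetilde X},M)=H^\ast(M)$, and since $H^\ast(\alpha^\ast\mathcal{O}(1))=H^\ast(\pr2,\mathcal{O}(1))=\mathbb{C}^3$ everything reduces to the restriction $\mathbb{C}^3\to H^0(\mathcal{O}_D)=\mathbb{C}$, which is evaluation at $x_0$ and hence surjective; its kernel is the asserted $\mathbb{C}^2$. For (2), using $\dR\alpha_\ast\mathcal{O}_{\widetilde X}=\mathcal{O}_{\pr2}$ and $\dR\alpha_\ast\mathcal{O}_D=\mathbb{C}_{x_0}$ gives $\Ext^\ast(\alpha^\ast\mathcal{T}(-1),\alpha^\ast\mathcal{O}(1))=\Hom_{\pr2}(\mathcal{T}(-1),\mathcal{O}(1))=H^0(\Omega^1(2))=\mathbb{C}^3$ and $\Ext^\ast(\alpha^\ast\mathcal{T}(-1),\mathcal{O}_D)=\mathbb{C}^2$ (the fibre of the rank-two bundle at $x_0$, in degree $0$); the connecting map is the evaluation of sections of $\Omega^1(2)\cong\mathcal{T}_{\pr2}(-1)$ at $x_0$, which is surjective because that bundle is globally generated, being a quotient of $\mathcal{O}^{\oplus3}$ via the (twisted) Euler sequence. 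The kernel is $\mathbb{C}$.

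For (4) and (5) I would use the explicit form $M\cong\alpha^\ast\mathcal{O}(1)(-D)$. Each $L_i$ is disjoint from $D$, because our $\pi$ is chosen so that the $c_{i,0}$ are not contracted and hence the exceptional curves $l_i$ avoid $q$; thus $\mathcal{O}(-D)$ is trivial near $L_i$ and $\Ext^\ast(M,\mathcal{O}_{L_i})=\Ext^\ast(\alpha^\ast\mathcal{O}(1),\mathcal{O}_{L_i})=\mathbb{C}$ by Lemma \ref{lem:v-bundle and L_i}. For (5), since $\alpha^\ast\mathcal{O}(1)$ is trivial along the $\tau$-exceptional locus, $\Ext^\ast(M,b_{i,j})=H^\ast\bigl(\mathcal{O}_{\widetilde X}(D)|_{Z_{i,j}}\bigr)$ with $Z_{i,j}=\supp b_{i,j}=C_{i,j}\cup\dots\cup C_{i,k}$; as $D\cdot C_{i,l}=0$ for every $l$, this restriction has degree $0$ on each component of the chain $Z_{i,j}$, so it is the trivial line bundle and $H^\ast(\mathcal{O}_{Z_{i,j}})=\mathbb{C}$.

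The delicate point is (3). Here $\Ext^\ast(M,\alpha^\ast\mathcal{O}(1))=H^\ast(\widetilde X,\mathcal{O}_{\widetilde X}(D))$ and $\chi(\mathcal{O}(D))=1$. Vanishing of $H^2$ follows from Serre duality once one knows $K_{\widetilde X}-D$ is not effective, which holds because $-K_{\widetilde X}$ is big (Remark \ref{rem:rationality}), so $K_{\widetilde X}$ — and a fortiori $K_{\widetilde X}-D$ — has no sections. The crux is $H^0(\mathcal{O}(D))=1$, i.e. that $D$ is rigid: I would deduce this from Theorem \ref{thm:min-resol}, which shows the components of $D$ are exactly the $\tau$-exceptional curves and hence span a negative-definite sublattice of $\Pic(\widetilde X)$, so any effective divisor linearly equivalent to $D$ and supported there equals $D$. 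Then $H^1=H^0-\chi+H^2=0$ and $\Ext^\ast(M,\alpha^\ast\mathcal{O}(1))=\mathbb{C}$. I expect this rigidity/negative-definiteness step, together with the intersection-number bookkeeping it relies on, to be the main obstacle; the rest is a formal consequence of the identification $M\cong\alpha^\ast\mathcal{O}(1)(-D)$.
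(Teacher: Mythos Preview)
Your proof is correct, and the explicit identification $M\cong\alpha^\ast\sO(1)(-D)$ is a nice observation, but your route differs substantially from the paper's. The paper does not identify $M$ as a line bundle; it simply applies $\Hom(-,E)$ or $\Hom(E,-)$ to the defining sequence \eqref{eq:DefOfM} and reads off the answers from Lemmas~\ref{lem:v-bundle and L_i} and~\ref{lem:D and Bi}. This makes (3) essentially trivial: since $(\alpha^\ast\sO(1),\sO_D)$ are consecutive terms of the exceptional collection $\sigma_{\widetilde X}$, one has $\Ext^\ast(\sO_D,\alpha^\ast\sO(1))=0$, and the long exact sequence immediately gives $\Ext^\ast(M,\alpha^\ast\sO(1))\cong\Ext^\ast(\alpha^\ast\sO(1),\alpha^\ast\sO(1))=\C$. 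Your approach instead computes $H^\ast(\sO_{\widetilde X}(D))$ and requires showing $D$ is rigid; note that your argument has a small gap here --- you assert that any effective $D'\sim D$ ``supported there'' equals $D$, but you must first argue that such a $D'$ \emph{is} supported on the $\tau$-exceptional locus (push forward by $\tau$: $\tau_\ast D'\sim\tau_\ast D=0$ is effective on $X'$, hence zero). The paper's approach avoids this entirely and also handles (4) and (5) uniformly via the same long exact sequence, whereas your direct computations for (4) and (5), while correct and pleasingly geometric (using $D\cdot C_{i,l}=0$), require the explicit intersection bookkeeping you set up.
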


\begin{proof}
All the computations follow from the long exact sequence of the $\Hom$ functor applied to \eqref{eq:DefOfM}, combined with Lemmas \ref{lem:v-bundle and L_i} and \ref{lem:D and Bi}. For example, composition with $r$ gives a surjection 
\[ \Hom(\alpha^*(\sT(-1)), \alpha^*(\sO(1))) \twoheadrightarrow \Hom(\alpha^*(\sT(-1)),\sO_D), \]
which immediately implies (2).
\end{proof}

\begin{lemma} \label{lem:M2}
\label{lem:M1} Let $M$ be as in \eqref{eq:DefOfM}. Then:
\begin{enumerate}[1.] 
\item 
$\Ext^{\ast}(e_l,\Phi(M))=\begin{cases}  
0 & \mbox{if } \ 2\leq l< 4k-1\\ 
\CC^2[-2] & \mbox{if } \  l=4k-1\\
\CC[-1] & \mbox{if } \ l=4k
\end{cases} $
\item for $i=1,2,3,4$ and for $k<j <2k$, $\Ext^{\ast}(e_{i,j}, \Phi(M))=0$;
\item  $\Ext^{\ast}(e_{i,2k}, \Phi(M))=\begin{cases}  
0 & \mbox{if } \  i=1,2\\ 
\CC[-1] & \mbox{if } \  i=3,4\\
\end{cases} $
	
\end{enumerate}
\end{lemma}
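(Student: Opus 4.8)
The plan is to push every computation onto the smooth surface $\widetilde X$ via adjunction, using that $\Ext^\ast(e,\Phi(M))\cong\Ext^\ast_{\widetilde X}(\Psi(e),M)$ together with the explicit forms of $\Psi(e_l)$ and $\Psi(e_{i,j})$ in \eqref{eq:Psi(ei)} and \eqref{eq:psieij}. The key structural observation, which I expect to streamline the whole argument, is that $M$ is in fact a line bundle. Indeed, by Theorem \ref{thm:min-resol} the map $\alpha=\tau\circ\pi$ contracts every component of $D$ to the single point $\pi(q)\in\P^2$, so $\alpha^\ast\sO(1)$ is trivial on $D$ and the restriction $\alpha^\ast\sO(1)\to\alpha^\ast\sO(1)|_D\cong\sO_D$ is, up to scalar, the unique nonzero map $r$ of \eqref{eq:DefOfM}. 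Hence $M=\alpha^\ast\sO(1)(-D)$, and each $\Ext^\ast_{\widetilde X}(\Psi(e),M)$ reduces to the $\Ext$ of a sheaf supported on a rational curve against a line bundle.

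First I would dispose of the vanishing cases. For $2\le l<4k-1$ one has $\Psi(e_l)=0$ by \eqref{eq:Psi(ei)}, so the group vanishes with no computation. For $k<j<2k$ we have $\Psi(e_{i,j})=\sO_{C'}(-1)$ with $C'=C_{i,2k-j}$ a $(-2)$-curve; here I would record $\alpha^\ast\sO(1)\cdot C'=0$ and $D\cdot C'=0$ (the latter from $(C')^2=-2$ and $C'$ meeting its two chain neighbours, whose $D$-coefficients differ by one on each side), conclude $M|_{C'}\cong\sO_{C'}$, and then use the local computation $\mathcal{E}xt^1_{\widetilde X}(\sO_{C'}(-1),M)\cong\sO_{C'}(1)\otimes M|_{C'}\otimes N_{C'}\cong\sO_{C'}(-1)$ with $N_{C'}=\sO_{C'}(-2)$; since $H^\ast(\P^1,\sO(-1))=0$ the group vanishes.

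The substantive cases $l=4k-1$, $l=4k$ and $j=2k$ all run through the same mechanism: for a sheaf $\sO_C(m)$ on a smooth rational curve $C\subset\widetilde X$ and a line bundle $L$, the only nonvanishing local Ext is $\mathcal{E}xt^1_{\widetilde X}(\sO_C(m),L)\cong\sO_C(-m)\otimes L|_C\otimes N_C$, whence $\Ext^n_{\widetilde X}(\sO_C(m),L)\cong H^{n-1}(C,\sO_C(-m)\otimes L|_C\otimes N_C)$ by the local-to-global spectral sequence. Feeding in $\Psi(e_{4k-1})=\sO_F(-4k-1)[1]$, with $N_F=\sO_F(-4k-1)$, $\alpha^\ast\sO(1)\cdot F=0$ and $D\cdot F=F^2+k\sum_i C_{i,k}\cdot F=-(4k+1)+4k=-1$ so that $M|_F\cong\sO_F(1)$, gives $H^0(F,\sO_F(1))=\CC^2$ in cohomological degree $2$, i.e. $\CC^2[-2]$; the input $\Psi(e_{4k})=\sO_F(-4k)$ gives $H^0(F,\sO_F)=\CC$, i.e. $\CC[-1]$; and $\Psi(e_{i,2k})=\sO_{C_{i,0}}(-2)$ on the $(-3)$-curve $C_{i,0}$, with $D\cdot C_{i,0}=1$ and $\alpha^\ast\sO(1)\cdot C_{i,0}=\deg\alpha(C_{i,0})$ equal to $1$ for $i=1,2$ and $2$ for $i=3,4$, gives $H^\ast(\P^1,\sO(-1))=0$ for $i=1,2$ and $H^0(\P^1,\sO)=\CC$ for $i=3,4$, matching the stated cases.

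The main obstacle, and essentially the only place needing care, is the bookkeeping that pins down each restriction $M|_C$ and normal bundle $N_C$: one must correctly track the coefficients of $D=F+\sum_{i,j}jC_{ij}$ and the degrees of $\alpha^\ast\sO(1)$ on the exceptional curves, all of which rest on Theorem \ref{thm:min-resol}, the intersection data \eqref{eq:canonical-intersection}, and the choice of $\pi$ fixed in Section \ref{ssec:HomSpaces}. As an independent cross-check one can instead apply $\Ext^\ast(e,\Phi(-))$ to the triangle $\Phi(M)\to\Phi(\alpha^\ast\sO(1))\to\Phi(\sO_D)$ and read the flanking terms off Lemmas \ref{lem:ep} and \ref{lem:epi}; in that route the work shifts to proving that the connecting maps induced by $r$ have the expected rank, a point which the line-bundle description $M=\alpha^\ast\sO(1)(-D)$ renders transparent.
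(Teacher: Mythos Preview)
Your argument is correct, and it takes a genuinely different route from the paper. The paper's proof applies $\Ext^*(e,\Phi(-))$ to the defining triangle \eqref{eq:DefOfM} and reads the flanking terms directly from Lemmas~\ref{lem:ep} and~\ref{lem:epi}; this is very short once those lemmas are in hand, but in several cases (e.g.\ $l=4k-1$, where the long exact sequence contains a segment $\C\xrightarrow{r_*}\C$) one must still verify that the map induced by $r$ has the expected rank, a point the paper leaves implicit. Your approach instead identifies $M=\alpha^*\sO(1)(-D)$ as an honest line bundle and computes each $\Ext^*_{\widetilde X}(\Psi(e),M)$ directly via the local-to-global spectral sequence, reducing everything to degrees of line bundles on $\P^1$. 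This is more self-contained, makes the connecting-map issue disappear, and in fact your final paragraph correctly diagnoses the trade-off between the two methods. Either route is fine; yours is arguably cleaner for the nontrivial cases, while the paper's is faster given that Lemmas~\ref{lem:ep} and~\ref{lem:epi} have already been established.
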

\begin{proof}
All the computations follow from 
the long exact sequence of the $\Hom$ functor applied to \eqref{eq:DefOfM}, together with Lemma \ref{lem:ep} and Lemma \ref{lem:epi}. For example, assertions (1) and (6) in Lemma \ref{lem:epi} immediately imply (2). 
\end{proof}
\smallskip


\subsection{Exceptional collections and tilting}\label{ssec:tilting}

Let $\sD$ be a triangulated category. We say that $T\in \sD$ \textit{generates} $\sD$ if, for every $E\in \sD$, we have that $\Hom(T,E)=0$ implies $E\simeq 0$, and that $T$ \textit{classically generates} $\sD$ if the smallest full triangulated subcategory containing $T$, closed under isomorphisms and direct summands, is $\sD$ itself.
An object $T\in \sD$ is \textit{partial tilting} if $\Hom(T,T[l])=0$ for all $l\neq 0$. We say that $T$ is a \textit{tilting} object if, moreover, it generates $\sD$.

Generally speaking, tilting objects produce equivalences between $\sD$ and derived categories of modules over their endomorphism algebra. More precisely, suppose $\sD=D^b(\Coh(Z))$ where $Z$ is a smooth Deligne-Mumford stack with finite stabilizers and projective coarse moduli space. Then we have:

\begin{proposition}
\label{prop:tilting_gives_equivalence}
Let $T$ be a tilting complex in $\sD$ and denote by $\Lambda=\Hom(T,T)$ its endomorphism algebra. Then, the functor $\dR\Hom_{\sO_{Z}}(T,-)$ defines an equivalence of triangulated categories
\[ \sD \xrightarrow{\sim} D^b(\mathrm{mod-}\Lambda) \]
with inverse $-\otimes^{\dL}_\Lambda T$.
\end{proposition}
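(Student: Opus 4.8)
The plan is to exhibit $\Phi\coloneqq\dR\Hom_{\sO_Z}(T,-)$ as one half of an adjoint equivalence, using the two defining properties of a tilting object---vanishing of higher self-extensions and generation---together with the geometric hypotheses on $Z$. Set $\Psi\coloneqq -\otimes^{\dL}_\Lambda T$. The tensor-hom adjunction yields a functorial isomorphism $\Hom_{\sD}(\Psi(N),E)\simeq \Hom_{D(\Lambda)}(N,\Phi(E))$, so that $(\Psi,\Phi)$ is an adjoint pair of triangulated functors. Because $Z$ has projective coarse moduli space and finite stabilizers, $\sD=D^b(\Coh(Z))$ is $\Hom$-finite, and for $E\in\sD$ the graded module $\bigoplus_i\Ext^i(T,E)$ is finitely generated over $\Lambda$ and bounded; hence $\Phi$ indeed takes values in $D^b(\mathrm{mod-}\Lambda)$.

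Next I would evaluate both functors on the respective generators. Since $T$ is partial tilting, $\Phi(T)=\dR\Hom(T,T)=\Hom(T,T)=\Lambda$, concentrated in degree $0$; and $\Psi(\Lambda)=\Lambda\otimes^{\dL}_\Lambda T=T$. Consequently the counit $\varepsilon\colon\Psi\Phi\to\mathrm{id}_{\sD}$ is an isomorphism on $T$, and the unit $\eta\colon\mathrm{id}\to\Phi\Psi$ is an isomorphism on the free module $\Lambda$.

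The core of the argument is then a propagation step. The full subcategory of $\sD$ on which $\varepsilon$ is an isomorphism is triangulated and closed under direct summands, i.e. thick, and it contains $T$; thus it contains the thick subcategory generated by $T$. Here the geometric hypotheses enter decisively: $\sD$ is saturated, so the weak generation assumption on $T$ upgrades to classical generation, and the thick subcategory generated by $T$ is all of $\sD$. Therefore $\varepsilon$ is an isomorphism everywhere and $\Phi$ is fully faithful. On the algebraic side, smoothness of $Z$ forces $\Lambda$ to have finite global dimension, so that $D^b(\mathrm{mod-}\Lambda)=\mathrm{Perf}(\Lambda)$ is classically generated by $\Lambda$; the subcategory where $\eta$ is an isomorphism is thick and contains $\Lambda$, hence is everything. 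Thus $\Phi$ and $\Psi$ are mutually inverse equivalences.

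The step I expect to be the main obstacle is the upgrade from generation to classical generation, which is exactly where the hypotheses ``finite stabilizers'' and ``projective coarse moduli space'' are needed: they guarantee that $D^b(\Coh(Z))$ is a $\Hom$-finite, saturated (equivalently, strongly generated and idempotent-complete) triangulated category, so that an object which generates in the orthogonality sense of the definition also split-generates. Verifying finiteness of the global dimension of $\Lambda$ is comparatively routine once smoothness of $Z$ is invoked, and the remaining verifications are formal consequences of the triangulated adjunction.
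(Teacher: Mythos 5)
Your overall architecture is the standard one and essentially coincides with what the paper invokes: for schemes the result is quoted from \cite[Theorem 7.6]{HVdB07}, and for stacks the paper reduces to the ``standard proof'' (cf.\ \cite[\S 3]{Bae88}) exactly along the lines you describe --- adjunction, evaluation on the generators $T$ and $\Lambda$, and propagation over thick subcategories. Your computation $\Phi(T)=\Lambda$, $\Psi(\Lambda)=T$, the thickness of the locus where the (co)unit is invertible, and the boundedness/finiteness of $\Phi(E)$ are all fine.

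The load-bearing step, however, is the upgrade from generation (in the orthogonality sense) to classical generation, and your justification of it is where the gap lies. Saturatedness of the ambient category $\sD$ does \emph{not} by itself imply that a weakly generating thick subcategory is all of $\sD$: what one actually needs is that the thick subcategory $\langle T\rangle$ is \emph{admissible}, and the Bondal--Kapranov criterion for that requires $\langle T\rangle$ itself (not $\sD$) to be saturated. Since $\langle T\rangle\simeq \mathrm{Perf}(\Lambda)$, this amounts to $\Lambda$ having finite global dimension --- but in your write-up that finiteness is deferred to the end and extracted ``from smoothness of $Z$'', which in the usual argument is done by transporting Ext-amplitude bounds through the very equivalence (or at least through the identification $\langle T\rangle\simeq\mathrm{Perf}(\Lambda)$) that you are in the middle of establishing. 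The route is salvageable, but only if you reorder it: first prove $\dR\Hom(T,-)$ restricts to an equivalence $\langle T\rangle\xrightarrow{\sim}\mathrm{Perf}(\Lambda)$, then deduce $\mathrm{gl.dim}\,\Lambda<\infty$ from the uniform Ext-amplitude on the smooth proper $Z$, then conclude that $\langle T\rangle$ is saturated, hence admissible, hence equal to $\sD$ because its right orthogonal vanishes. The paper avoids this delicacy entirely by working in the unbounded category: $D(\QCoh(Z))$ is compactly generated because smooth DM stacks with finite stabilizers are perfect stacks \cite[Prop.\ 3.9]{BZFN10}, and then \cite[Theorem 2.1.2]{BVdB03} converts generation into classical generation of the compact objects directly. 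You should either adopt that mechanism or make the reordering above explicit; as written, the sentence ``$\sD$ is saturated, so weak generation upgrades to classical generation'' asserts an implication that is not a theorem.
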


\begin{proof}
For schemes, this is \cite[Theorem 7.6]{HVdB07}.
If $\sX$ is a smooth DM stack, $D(\QCoh(Z))$ is compactly generated (this is shown for the more general class of \textit{perfect} stacks in \cite[Prop. 3.9]{BZFN10}\footnote{Smooth DM stacks with finite stabilizers have affine diagonal, and hence are perfect. Our setting is analogous to that of \cite[Lemma 4.1]{AU15}, which follows a similar argument.}
). Then, $T$ generates $D(\QCoh(Z))$ if and only if it classically generates $\sD$ \cite[Theorem 2.1.2]{BVdB03}, and the standard proof (see e.g. \cite[\S 3]{Bae88}) suffices to conclude. 
\end{proof}

If $(E_1,...,E_n)$ is a strong full exceptional collection in $\sD$, then $\sE=\oplus_{i=1}^n E_i$ is a tilting complex. If the collection is not strong, $\Lambda_\sE=\Hom(\sE,\sE)$  is a $\mathrm{dg}$-algebra, and one may still obtain an equivalence, analogous to Prop. \ref{prop:tilting_gives_equivalence}, between $\sD$ and the derived category of $\mathrm{dg}$-modules over $\Lambda_\sE$. Alternatively, if the exceptional collection only admits maps in degrees $0$ and $1$, we obtain an explicit tilting complex from the so called \textit{universal extension trick}, which we briefly recall here:

\begin{lemma}
\label{lem:UnivExtTrick}
Assume that $(E_1,...,E_n)$ is a collection of objects of $\sD$ such that:
\begin{enumerate}[(i)]
\item \label{itm:self} each $E_i$ is partial tilting;
\item\label{itm:gen} the sum $\oplus_{i=1}^n E_i$ generates $D^b(\Coh(\sX))$;
\item \label{itm:up} if $i<j$, then $\Hom(E_i,E_j[l])=0$ for all $l\neq 0,1$;
\item \label{itm:down} if $i<j$, then $\Hom(E_j,E_i[l])=0$ for all $l\in \Z$.
\end{enumerate}
Then $\sD$ admits a tilting complex. 
\end{lemma}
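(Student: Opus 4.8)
The plan is to peel off one object at a time and repair the single remaining kind of bad morphism --- the degree-one maps that strictly raise the index --- by a universal extension. First I would record what the hypotheses say about $T := \bigoplus_{i=1}^{n} E_i$: condition (i) kills $\Hom(E_i,E_i[l])$ for $l\neq 0$; condition (iv) kills $\Hom(E_i,E_j[l])$ for every $l$ whenever $i>j$; and condition (iii) confines $\Hom(E_i,E_j[\bullet])$ to degrees $0$ and $1$ when $i<j$. Thus the only nonzero graded pieces of $\Hom(T,T[\bullet])$ in nonzero degree are the spaces $\Ext^1(E_i,E_j)$ with $i<j$. It is convenient to prove the slightly more flexible statement by induction on $n$: if objects $E_1,\dots,E_n$ satisfy (i), (iii), (iv) and classically generate a triangulated subcategory $\sD'\subseteq\sD$, then $\sD'$ carries a tilting object obtained from the $E_i$ by iterated universal extensions. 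Taking $\sD'=D^b(\Coh(\sX))$ via (ii) then proves the lemma, while the base case $n=1$ is immediate from (i).

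For the inductive step I would peel off the minimal object $E_1$. The objects $E_2,\dots,E_n$ inherit (i), (iii), (iv) and classically generate $\sD''=\langle E_2,\dots,E_n\rangle$, so by induction $\sD''$ has a tilting object $T'$ built from $E_2,\dots,E_n$; in particular $\Hom(T',T'[l])=0$ for $l\neq 0$. Because $T'$ is assembled out of the $E_j$ with $j\ge 2$, condition (iii) forces $\Hom(E_1,T'[\bullet])$ into degrees $0,1$, while condition (iv) gives $\Hom(T',E_1[l])=0$ for \emph{all} $l$. Setting $V:=\Ext^1(E_1,T')$, a finite-dimensional space since the coarse space of $\sX$ is projective, I would form the universal extension
\[ T'\otimes V^{*}\to F_1\to E_1\xrightarrow{\ \mathrm{can}\ }T'\otimes V^{*}[1], \]
where $\mathrm{can}$ is the tautological class in $\Ext^1(E_1,T'\otimes V^{*})=V\otimes V^{*}=\Endo(V)$.

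It then remains to check that $F_1\oplus T'$ is partial tilting and generates $\sD'$. Generation is clear: $E_1$ is recovered from the defining triangle and $E_2,\dots,E_n$ lie in $\langle T'\rangle$. For the homological vanishing one feeds the triangle into the long exact sequences obtained from $\Hom(-,T')$, $\Hom(T',-)$ and $\Hom(F_1,-)$. Applying $\Hom(-,T')$ and using $\Ext^1(T',T')=0$ shows that the tautological choice of $\mathrm{can}$ makes the connecting map surjective, so $\Ext^1(F_1,T')=0$; the higher and negative terms vanish because $\Hom(E_1,T'[\bullet])$ sits in degrees $0,1$ and $\Hom(T'\otimes V^{*},T'[l])=V^{*}\otimes\Hom(T',T'[l])=0$ for $l\neq 0$. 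Applying $\Hom(T',-)$ and combining $\Hom(T',E_1[l])=0$ with $\Hom(T',T'[l])=0$ ($l\neq 0$) gives $\Hom(T',F_1[l])=0$ for $l\neq 0$. Finally $\Hom(F_1,F_1[l])=0$ for $l\neq 0$ follows from $\Hom(F_1,T'[l])=0$ and $\Hom(F_1,E_1[l])=0$ ($l\neq 0$), the latter again a consequence of (i) and (iv). Hence $F_1\oplus T'$ is a tilting object of $\sD'$, completing the induction.

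The point on which the argument turns is the choice of what to peel and in which direction to extend. Extending the \emph{minimal} object $E_1$ by the already-constructed tilting object $T'$ is exactly what makes the long exact sequences collapse: it is the vanishing $\Hom(T',T'[l])=0$ for $l\neq 0$ --- available only because $T'$ is genuinely tilting, not merely partial tilting --- together with the total downward vanishing $\Hom(T',E_1[\bullet])=0$ from (iv) that prevents new negative or higher self-extensions of $F_1$ from appearing. I expect this bookkeeping, namely confirming that repairing the upward $\Ext^1$ introduces no new obstruction, to be the only genuinely delicate part; the universal extension property itself and the generation statement are routine.
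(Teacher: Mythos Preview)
Your argument is correct and rests on the same universal-extension idea as the paper, but the induction is organized differently. The paper first forms the universal extension $E_2 \to F \to E_1^{\oplus r}$ (with $r=\dim\Ext^1(E_1,E_2)$), sets $E_2'=E_2\oplus F$, checks that the shorter list $(E_2',E_3,\ldots,E_n)$ again satisfies (i)--(iv), and only then invokes the inductive hypothesis. You instead recurse first on $(E_2,\ldots,E_n)$ to obtain a tilting object $T'$, then kill all of $\Ext^1(E_1,-)$ in a single universal extension by $T'$. Both routes work and the long-exact-sequence bookkeeping is essentially identical; your version has the minor advantage that you make explicit the strengthened inductive hypothesis (that the tilting object is built from the $E_j$ by extensions and direct sums, so that $\Hom(E_1,T'[\bullet])$ stays in degrees $0,1$ and $\Hom(T',E_1[\bullet])$ vanishes), whereas the paper defers this verification to \cite{Har21}. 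The paper's pairwise step is slightly more modular---one only ever checks that hypotheses (i)--(iv) are preserved---while yours reaches the tilting object in fewer, larger extensions.
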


\begin{proof} 

We direct the reader to \cite[Lemma 2.4]{Har21} for the details of the proof, we only recall the explicit construction here.

Proceed by induction on $n$: the case $n=1$ is clear. Assume now that $n>1$. Let $r$ denote the dimension of $\Ext^1(E_1,E_2)$, then the universal extension map produces a triangle
\begin{equation}
\label{eq:UnivExt}
 E_2 \to F \to E_1^{\oplus r}.  
\end{equation}
Put $E_2'=E_2\oplus F$ and $E_i'=E_{i}$ for $i=3,...,n$. Then,   $\{E_2', E_3',...,E_{n}'\}$ still satisfies the assumptions. 
This claim is proven as in \cite{Har21}. The only significant difference is that we do not assume the $E_i$ to be sheaves, and therefore need more care to control negative Ext groups.
\end{proof}

\begin{corollary}
\label{cor:ExcCollToTilting}
Suppose $(E_1,...,E_n)$ is a full exceptional collection of $\sD$, such that $\Hom(E_i,E_j[l])=0$ for all $l\neq 0,1$. Then $\sD$ admits a tilting complex. 
\end{corollary}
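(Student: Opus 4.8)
The plan is to deduce the statement directly from Lemma~\ref{lem:UnivExtTrick}: the whole task reduces to verifying its four hypotheses for the collection $(E_1,\dots,E_n)$, after which the universal extension construction recalled in that lemma produces the tilting complex automatically.

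First I would record that each $E_i$ is exceptional, so $\dR\Hom(E_i,E_i)=\CC$ is concentrated in degree $0$; in particular $\Hom(E_i,E_i[l])=0$ for all $l\neq 0$, which is exactly the partial tilting hypothesis (i) of the lemma. Next, since the collection is full, the sum $\bigoplus_{i=1}^n E_i$ classically generates $\sD$, and classical generation coincides with generation in our setting (as already used in the proof of Proposition~\ref{prop:tilting_gives_equivalence}, via \cite[Theorem 2.1.2]{BVdB03}); this supplies hypothesis (ii). Hypothesis (iii) is immediate, since the assumption $\Hom(E_i,E_j[l])=0$ for all $l\neq 0,1$ holds for every pair, and a fortiori when $i<j$. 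Finally, hypothesis (iv) is precisely the semiorthogonality encoded in the ordering of an exceptional collection: $\Hom(E_j,E_i[l])=0$ for all $l\in\ZZ$ whenever $j>i$.

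With all four hypotheses in place, Lemma~\ref{lem:UnivExtTrick} yields the tilting complex, completing the argument. I do not expect a genuine obstacle here: the corollary is a bookkeeping exercise matching the definitions of \emph{full} and \emph{exceptional} against the input of the lemma. The only points worth a moment of care are that the degree-vanishing assumption is stated for all index pairs, so restricting to $i<j$ costs nothing, and that the notion of ``generates'' required by Lemma~\ref{lem:UnivExtTrick} is provided by fullness through the equivalence of the two generation notions recalled above.
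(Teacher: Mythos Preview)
Your proof is correct and follows exactly the paper's approach: the paper's own proof simply reads ``This is immediate from Lemma~\ref{lem:UnivExtTrick}'', and your write-up spells out the straightforward verification of the four hypotheses. The only embellishment you add is the remark about classical generation versus generation, which is accurate and harmless.
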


\begin{proof}
This is immediate from Lemma \ref{lem:UnivExtTrick}.
\end{proof}

Applying this to the case at hand:

\begin{proposition}
The exceptional collections $L\sigma_{\widetilde{X}}$ and $L\sigma'_\sX$ from Section \ref{ssec:HomSpaces} give rise to tilting complexes in $D(\widetilde{X})$ and $D(\sX)$. 
\end{proposition}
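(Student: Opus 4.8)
The plan is to reduce the statement to Corollary \ref{cor:ExcCollToTilting}, which produces a tilting complex as soon as one exhibits a full exceptional collection whose graded morphism spaces vanish outside cohomological degrees $0$ and $1$. Thus the whole argument comes down to checking the two hypotheses of that corollary for each of $L\sigma_{\widetilde{X}}$ and $L\sigma'_\sX$; both ambient categories $D(\widetilde{X})$ and $D(\sX)$ already sit in the framework of Section \ref{ssec:tilting}, so no further input about the geometry of $\widetilde{X}$ or $\sX$ is needed.

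First I would check fullness and exceptionality. The collection $\sigma_{\widetilde{X}}$ is full exceptional by Theorem \ref{thm:Xtilde-cat}, and $L\sigma_{\widetilde{X}}$ is obtained from it by a single left mutation in the pair $(\alpha^\ast\sO(1),\sO_D)$; since mutations of full exceptional collections are again full exceptional, so is $L\sigma_{\widetilde{X}}$. Likewise $\sigma_\sX$ is full exceptional by Corollary \ref{thm:stack-cat}, and passing to $L\sigma'_\sX$ only shifts the objects of the block $\mathcal{E}'_p$ and applies the fully faithful functor $\Phi$ to the mutated collection $L\sigma_{\widetilde{X}}$ --- operations that preserve fullness and exceptionality. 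Hence both collections are full exceptional.

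Second, and this is the substance, I would verify that $\Hom(E_i,E_j[l])=0$ whenever $l\neq 0,1$. For $i=j$ this is exceptionality of each object, and for $i>j$ it is the semiorthogonality built into an exceptional collection, which kills all backward morphisms in every degree; so only the forward morphisms ($i<j$) require attention. Here I would consolidate the computations of Section \ref{ssec:HomSpaces}: Lemmas \ref{lem:v-bundle and L_i}, \ref{lem:D and Bi} and \ref{lem:M} govern the morphisms inside $\Phi(L\sigma_{\widetilde{X}})$ (equivalently, inside $L\sigma_{\widetilde{X}}$, as $\Phi$ is fully faithful), while Lemmas \ref{lem:ep}, \ref{lem:epi} and \ref{lem:M2} govern the morphisms emanating from $\mathcal{E}'_p$ and the $\mathcal{E}_{p_i}$. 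In every case the nonzero $\Ext$-groups lie only in degrees $0$ and $1$, so the hypothesis is met and Corollary \ref{cor:ExcCollToTilting} yields the two tilting complexes.

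The main obstacle is precisely this degree control. The adjunction computations via $\Psi$ together with the local formulas of Examples \ref{ex:1/n11ImagesOfPsi} and \ref{ex:1/(2k+1)1kImagesOfPsi} produce a priori $\Ext^2$-terms, so an unmodified collection would not be supported in degrees $0,1$; the two preparatory modifications are designed exactly to absorb them. The left mutation $\sO_D\rightsquigarrow M$ of \eqref{eq:DefOfM} cancels the degree-$2$ contribution attached to $\sO_D$, and the shifts defining $\mathcal{E}'_p$ are calibrated so that the point-$p$ contributions are translated into the admissible range while keeping the internal morphisms of $\mathcal{E}'_p$ (read off the McKay quiver of $\frac{1}{4k+1}(1,1)$) in degrees $0$ and $1$. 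I would therefore present the verification as a careful reconciliation of these lemmas with the chosen shifts and mutation, stressing that it is this bookkeeping, rather than any new geometric ingredient, that constitutes the real work.
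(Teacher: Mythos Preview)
Your proposal is correct and follows exactly the paper's (implicit) approach: the proposition is stated immediately after Corollary \ref{cor:ExcCollToTilting} with the words ``Applying this to the case at hand'', and your write-up simply spells out what that application entails --- fullness and exceptionality are preserved by mutation and shift, and the lemmas of Section \ref{ssec:HomSpaces} (together with the McKay quiver data for the blocks $\mathcal{E}'_p$ and $\mathcal{E}_{p_i}$) confine all forward morphisms to degrees $0$ and $1$. The bookkeeping you describe in your final paragraph about the role of the mutation and the shifts is precisely the content summarized in Tables \ref{tab:MinRes}--\ref{tab:Epi}.
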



\begin{table}[ht!]
\centering
\begin{tabular}{l|ccccccc}
& $\sO_{\widetilde{X}}$   & $\alpha^\ast \sT(-1)$ & $M$   & $ \alpha^\ast \sO(1)$  & $\sO_{L_t}$ & $b_{i,j}$ & $b_{i,l}$   \\
\hline
$\sO_{\widetilde{X}}$   & $1$ & $3$ & $2$ & $3$ & $1$ & $1$ & $1$   \\
$\alpha^\ast \sT(-1)$ &     & $1$ & $1$   & $3$ & $2$ & $2$ & $2$   \\
$M$   &     &     & $1$ & $1$ & $1$ & $1$ & $1$   \\
$\alpha^\ast \sO(1)$  &     &     &     & $1$ & $1$ & $1$ & $1$   \\
$\sO_{L_t}$ &     &     &     &     & $1$ & $0$ & $0$   \\
$b_{i,j}$ &     &     &     &     &     & $1$ & $(1,1)$  \\
$b_{i,l}$ &     &     &     &     &     &     & $1$  
\end{tabular}
\caption{The  Hom-spaces of the collection $L\sigma_{\widetilde{X}}$.}
\label{tab:MinRes}
\end{table}


\begin{table}[ht!]
\centering
\begin{tabular}{l|c||cccccc}
          & $e_{4k-j}[-j-1]$ &  $\Phi(\sO_{\widetilde{X}})$   & $\Phi \alpha^\ast \sT(-1)$ & $\Phi M$   & $\Phi \alpha^\ast \sO(1)$  & $\Phi \sO_{L_t}$ & $\Phi b_{i,1}$  \\
                     \hline 
       $\vdots$ & \multirow{4}{*}{\thead{McKay quiver \\ (with shifts) \\ Example \ref{exa:1n11}}} & & 
              &                                                                                       \\  
$e_{4k-2}[-3]$ &      & $0$ & $0$ & $0$ & $0$ & $0$ & $0$  \\
$e_{4k-1}[-2]$ &      & $1$ & $2$ & $2$ & $1$ & $0$ & $1$  \\
$e_{4k}[-1]$   &      & $0$ & $0$ & $1$ & $0$ & $0$ & $1$ 
\end{tabular}
\caption{The Hom-spaces between objects in $\sE^\prime_p$ and from $\sE^\prime_p$ to $\Phi(L\sigma_{\widetilde{X}})$. The shifts of the objects in $\sE_p'$ make the arrows from the McKay quiver into maps of degree $0$.}
\label{tab:Ep}
\end{table}


\begin{table}[ht!]
\centering
\begin{tabular}{l|c||ccccccc}
           & $e_{i,2k-j}$ &  $\Phi(\sO_{\widetilde{X}})$   & $\Phi \alpha^\ast \sT(-1)$ & $\Phi M$   & $\Phi \alpha^\ast \sO(1)$  & $\Phi \sO_{L_t}$ &  $\Phi b_{i,2k-j}$ & $\Phi b_{i,2k-j+1}$   \\
           \hline
$e_{i,k+1}$     & \multirow{8}{*}{\thead{McKay quiver \\ Example \ref{exa:12k+1}}}  & $0$ & $0$ & $0$ & $0$ & $0$                            & $\cdots$  & $\cdots$  \\
       $\vdots$ &  &    &     &     &     &                                  &      &           \\
$e_{i,j}$    &  &  $0$ & $0$ & $0$ & $0$ & $0$                              &    $(1,0)$ & $(0,1)$   \\
   $\vdots$     &   &     &     &     &     &                                     &   &         \\
$e_{i,2k-1}$    & &    $0$ & $0$ & $0$ & $0$ & $0$                                       & $\cdots$  & $\cdots$  \\
\cline{1-1}\cline{3-9}
$e_{1,2k}$ 
  &       & $0$ & $(0,1)$ & $0$ & $(0,1)$ & $(0, a_{1,t})$ &             $\cdots$  & $\cdots$   \\
$e_{3,2k}$ 
 &       & $0$ & $(0,2)$ & $(0,1)$ & $(0,2)$ &   $(0, a_{3,t})$                                        & $\cdots$  & $\cdots$    
\end{tabular}
\caption{The Hom-spaces from between objects in $\sE_{p_i}$ and from  $\sE_{p_i}$ to $\Phi(L\sigma_{\widetilde{X}})$. The arrows of the McKay quiver involved in the first column are morphisms of degree $1$. Thus every $e_{i,j}$ admits both $\Hom $'s and $\Ext^1$'s to other objects of the collection.}
\label{tab:Epi}
\end{table}


\begin{table}[ht!]
\centering
\begin{tabular}{l|ccc|c||cccc|c|c}
      & $e_2[-3]$ & $e_3[-2]$ & $e_4[-1]$ & $e_{i,2}[-1]$ & $\Phi\alpha^*\sO_{\widetilde{X}}$   & $\Phi\alpha^*\sT(-1)$ & $\Phi M$   & $\Phi\alpha^*\sO(1)$  & $\Phi\sO_{L_t}$    & $\Phi\sO_{C_{i,1}}$                     \\
      \hline
$e_2[-3]$  & $1$  & $2$  & $1$  &  $0$& $0$ &$0$& $0$& $0$& $0$  & $0$                                                                                                \\\cline{5-11}
$e_3[-2]$  &      & $1$  & $2$  & \multirow{2}{*}{$O_{2\times 4}$}  & $1$ & $2$ & $2$ & $1$ & \multirow{2}{*}{$O_{2\times 7}$} &  $1$  \\
$e_4[-1]$  & &  & $1$  &  & $0$ & $0$ & $1$ & $0$ &  & $1$ \\
\cline{5-5}\cline{10-11}
$e_{1,2}[-1]$ &      &      &      & \multirow{4}{*}{$\mathrm{Id}_4$} & $0$ & $1$ & $0$ & $1$ & \multirow{4}{*}{$A$} & \multirow{4}{*}{$\mathrm{Id}_4$}  \\
$e_{2,2}[-1]$ &      &      &      &    & $0$ & $1$ & $0$ & $1$ &  &                    \\
$e_{3,2}[-1]$ & & & & & $0$ & $2$ & $1$ & $2$ &  & \\
$e_{4,2}[-1]$ & & & & & $0$ & $2$ & $1$ & $2$ &  &     
\\                    
\end{tabular}
\caption{The strong exceptional collection for $\mathcal{X}$ when $k=1$.}
\label{tab:k=1}
\end{table}



\newcommand{\etalchar}[1]{$^{#1}$}

\end{document}